\setlist{font=\normalfont,leftmargin=*}
\newtheorem{thm}{Theorem}[section]
\newtheorem{lem}[thm]{Lemma}
\newtheorem{cor}[thm]{Corollary}
\newtheorem{prop}[thm]{Proposition}
\newtheorem{sg}{SG}
\newtheorem{introthm}{Theorem}
\theoremstyle{definition}
\newtheorem{defn}[thm]{Definition}
\newtheorem{exmp}[thm]{Example}
\newtheorem{quest}[thm]{Question}
\theoremstyle{remark}
\newtheorem{rem}[thm]{Remark}
\newtheorem{discussion}[thm]{Discussion}
\newtheorem*{notation}{Notation}
\newtheorem*{convention}{Convention}
\numberwithin{equation}{section}
\DeclareMathOperator{\lct}{lct}
\DeclareMathOperator{\fpt}{fpt}
\DeclareMathOperator{\ft}{ft}
\newcommand{\ftb}{\ft^{\idealb}}
\newcommand{\ftm}{\ft^{\idealm}}
\newcommand{\ftlm}[1]{\ft^{\idealm}(\ell^{#1})}
\newcommand{\ftlb}[1]{\ft^{\idealb}(\ell^{#1})}
\newcommand{\Dlm}{\Delta}
\newcommand{\Dlb}{\Delta}
\newcommand{\Plm}{\Phi}
\newcommand{\Plb}{\Phi}
\newcommand{\down}[1]{\left\lfloor #1 \right\rfloor}
\newcommand{\up}[1]{\left\lceil #1 \right\rceil}%
\newcommand{\tr}[2]{\left \langle {#1} \right \rangle_{#2}} 
\newcommand{\lpr}[2]{ [ #1 \hspace{.5mm} \%  \hspace{.5mm} #2  ]} 
\newcommand{\abs}[1]{\left|{#1}\right|}
\newcommand{\norm}[1]{\left\|{#1}\right\|}
\newcommand{\Upper}{\mathscr{U}}
\newcommand{\Lower}{\mathscr{L}}
\newcommand{\Boundary}{\mathscr{B}}
\newcommand{\Trivial}{\mathscr{T}}
\DeclarePairedDelimiter{\ideal}{\langle}{\rangle}
\newcommand{\idealm}{\mathfrak{m}}
\newcommand{\ideala}{\mathfrak{a}}
\newcommand{\idealb}{\mathfrak{b}}
\newcommand{\vv}[1]{\mathbf{{#1}}}
\newcommand{\vvv}[1]{\bm{{#1}}}
\newcommand{\canvec}{\mathbf{e}}
\newcommand{\kk}{\Bbbk}
\newcommand{\FF}{\mathbb{F}}
\newcommand{\RR}{\mathbb{R}}
\newcommand{\RRpos}{\mathbb{R}_{>0}}
\newcommand{\RRnn}{\mathbb{R}_{\ge0}}
\newcommand{\ZZ}{\mathbb{Z}}
\newcommand{\QQ}{\mathbb{Q}}
\newcommand{\QQpos}{\mathbb{Q}_{>0}}
\newcommand{\QQnn}{\mathbb{Q}_{\ge0}}
\newcommand{\NN}{\mathbb{N}}
\newcommand{\NNpos}{\mathbb{N}_{>0}}
\newcommand{\CC}{\mathbb{C}}
\newcommand{\unitint}{[0,1]_{p^\infty}}
\newcommand{\infint}{(\QQnn)_{p^\infty}}
\newcommand{\closure}[1]{\overline{#1}}
\newcommand{\oball}[2]{B_{#1}\left({#2}\right)}
\newcommand{\cball}[2]{\closure{B}_{#1}\left({#2}\right)}
\newcommand{\boundary}{\partial}
\newcommand{\eg}{e.g., }
\newcommand{\ie}{i.e., }
\newcommand{\etc}{etc.}
\newcommand{\cell}[1]{\left({#1}\right)}
\newcommand{\iref}{\ref}
\newcommand{\radius}{r}
\DeclareMathOperator{\mult}{mult}
\begin{document}

\title{$F$-threshold functions:  syzygy gap fractals and the two-variable homogeneous case}
\renewcommand\Affilfont{\footnotesize}
\author[*]{Daniel J.~Hern\'andez}
\author[**]{Pedro Teixeira}
\affil[*]{\textsl{Department of Mathematics, University of Utah, Salt Lake City, UT 84112, USA}
\authorcr \textsl{E-mail address}: \texttt{dhernan@math.utah.edu}}
\affil[**]{\textsl{Department of Mathematics, Knox College, Galesburg, IL 61401, USA}
\authorcr \textsl{E-mail address}: \texttt{pteixeir@knox.edu}}
\date{}
\maketitle

\begin{abstract}
	In this article we study $F$-pure thresholds (and, more generally, $F$-thresholds) of homogeneous polynomials in two variables
		over a field of characteristic~$p>0$.
	Passing to a field extension, we factor such a polynomial into a product of powers of pairwise prime linear forms, 
		and to this collection of linear forms we associate a special type of function called a \emph{syzygy gap fractal}.  
	We use this syzygy gap fractal to study, at once, the collection of all $F$-pure thresholds 
		of all polynomials constructed with the same fixed linear forms. 
	This allows us to describe the structure of the denominator of such an $F$-pure threshold,
		showing in particular that whenever the $F$-pure threshold differs from its expected value 
		its denominator is a multiple of~$p$.
	This answers a question of Schwede in the two-variable homogeneous case.
	In addition, our methods give an algorithm to compute $F$-pure thresholds of homogenous polynomials in two variables.
\end{abstract}

\section{Introduction}

Fix an arbitrary field $\kk$ of characteristic $p>0$, and consider a polynomial $g$ in $\kk[x_1, \ldots, x_r]$ with $g(\vv{0}) = 0$.  
By utilizing properties of the Frobenius endomorphism on the ambient polynomial ring, one may show that 
	\[ \fpt(g)\coloneqq \inf \left \{ \frac{a}{p^e} : g^a \in \ideal{x_1^{p^e}, \ldots, x_r^{p^e}} \right \}\]
	is a well-defined nonzero real number contained in the unit interval.  
This invariant, called the \emph{$F$-pure threshold} of $g$ (at the origin), was originally introduced in 
	\cite{takagi+watanabe.F-pure_thresholds}, though the definition we give here follows \cite{mustata+takagi+watanabe.F-thresholds}.  
Though it is not obvious from this definition, it turns out that the $F$-pure threshold of a polynomial is always a rational number  
	\cite[Corollary~2.30, Theorem~3.1]{blickle+mustata+smith.discr_rat_FPTs}.

Note that if, in the description of $\fpt(g)$ given above, one replaces the Frobenius power 
	$\idealm^{[p^e]} = \ideal{x_1^{p^e}, \ldots, x_r^{p^e}}$ of the maximal ideal $\idealm = \ideal{x_1, \ldots, x_r}$ 
	with the ordinary power $\idealm^{p^e}$, one would instead obtain the reciprocal of the multiplicity of $g$ at the origin 
	(that is, the largest $N$ such that $g \in \idealm^N$).  
Thus, the $F$-pure threshold may be thought of as a sort of  ``Frobenius multiplicity'', with smaller values  
	corresponding to ``worse'' singularities at the origin. 

In this article we are motivated by the relationship between $F$-pure thresholds and another important invariant, 
	traditionally defined for polynomials over fields of characteristic zero.  
Consider a polynomial $g$ over a field of characteristic zero that vanishes at the origin.  
By referring to a log resolution of singularities, one may assign to $g$ the numerical invariant $\lct(g)$, called the 
	\emph{log canonical threshold} of $g$ (at the origin).   
Like the $F$-pure threshold, the log canonical threshold of a polynomial is always a nonzero rational number contained 
	in the unit interval and may be thought of as a measure of the singularity of $g$ (at the origin), with smaller values 
	corresponding to ``worse'' singularities.  
For more on this invariant, we refer the reader to the survey \cite{blickle+lazarsfeld.intro_multiplier_ideals} and the references cited therein.
Throughout the rest of this article, we shall always consider polynomials vanishing at the origin and shall omit the phrase 
	``at the origin'' when referring to $F$-pure and log canonical thresholds. 

Remarkably, $F$-pure and log canonical thresholds are intimately related. 
Consider a polynomial $g_0$ over $\QQ$ and, for $p \gg 0$, let $g_p$ denote the polynomial over $\FF_p$, 
	the field with $p$ elements, obtained by reducing the coefficients of~$g_0$ modulo~$p$.   
It  follows from work of Hara and Yoshida \cite{hara+yoshida.generalization_TC_multiplier_ideals} that  
	$\fpt(g_p) \leq \lct(g_0)$ and $\lim_{p \to \infty} \fpt(g_p) = \lct(g_0)$ 
	(see \cite[Theorem~3.4]{mustata+takagi+watanabe.F-thresholds}).  
In general, little is known about how $\fpt(g_p)$ varies with $p$, and an important open conjecture predicts that 
	$\fpt(g_p) = \lct(g_0)$ for infinitely many primes.  
Motivated by understanding the situation when $\fpt(g_p) \neq \lct(g_0)$, the following question was asked by 
	Karl Schwede.\footnote{Schwede's question was asked during the Computational Workshop on Frobenius Singularities 
	and Invariants, held in Ann Arbor, MI, in 2012.  His question, and others, can be found at 
	\url{https://sites.google.com/site/computingfinvariantsworkshop/open-questions}.}  

\begin{quest}[Schwede]\label{question: Schwede}
	Fix $g_0 \in \ZZ[x_1, \ldots, x_r]$ vanishing at the origin.
	Assume $\fpt(g_p) \neq \lct(g_0)$, for a prime $p\gg 0$. 
	Write $\fpt(g_p) = a/b$ in lowest terms. 
	Does $p$ divide $b$? 
\end{quest}

In recent work, Bhatt and Singh (and, in a subsequent generalization, N\'u\~nez-Betancourt, Witt, Zhang, and the first author) 
	have shown the following:  
Suppose $g_0$ is a polynomial over $\QQ$ that is homogeneous under some $\NN$-grading and such that 
	the ideal generated by the partial derivatives of $g_0$ is primary to the ideal generated by the variables.  
If $p \gg 0$ and $\fpt(g_p) \neq \lct(g_0)$, then the denominator of $\fpt(g_p)$ is a not just a multiple of $p$ but in fact 
	a \emph{power} of~$p$ \cite{bhatt+singh.FPT_calabi_yao,hernandez+others.fpt_quasi-homog.polys}.  
In particular, a ``stronger'' form of Schwede's question has a positive answer for such 
	polynomials.\footnote{The question of under what circumstances the denominator of $\fpt(g_p)$ must be a power of~$p$ 
	was asked by the first author during the aforementioned workshop.}  
As far as the authors are aware, there are no such general descriptions of $F$-pure thresholds of polynomials whenever 
	one relaxes the hypothesis on the ideal generated by the corresponding partial derivatives.

 In this article we shed some further light on Question~\ref{question: Schwede}, answering it in what is perhaps the simplest nontrivial case.

\begin{introthm}[see Theorem~\ref{thm: p in denominator}]  
	If $G_0 \in \QQ[x,y]$ is a non-constant homogeneous polynomial,\footnote{Ongoing work by the first author and 
		Emily Witt suggests that this theorem may be false for certain non-homogeneous polynomials in two variables.} 
		then Question~\ref{question: Schwede} has a positive answer for $G_0$.  
	More precisely, if $p \gg 0$ and $\fpt(G_p) \neq \lct(G_0)$, then the minimal denominator of $\fpt(G_p)$ is of the 
		form $kp^e$, where $e \geq 1$ and $k$ divides the multiplicity of some linear factor \textup(over $\CC$\textup) of $G_0$.
\end{introthm}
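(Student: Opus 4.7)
My plan follows the roadmap laid out in the abstract, proceeding in three stages. First, since $\fpt$ is preserved under faithfully flat base change, I would pass to $\overline{\FF}_p[x,y]$ and use that the factorization $G_0 = c\,\ell_1^{a_1}\cdots\ell_n^{a_n}$ over $\overline{\QQ}$ reduces, for $p\gg 0$, to a factorization $G_p = c_p\,\ell_{1,p}^{a_1}\cdots\ell_{n,p}^{a_n}$ with the same multiplicities and with the $\ell_{i,p}$ still pairwise prime. A single blowup at the origin is a log resolution of $G_0$, and reading discrepancies off this resolution gives $\lct(G_0) = \min\bigl\{1/a_1,\dots,1/a_n,\;2/(a_1+\cdots+a_n)\bigr\}$, so the problem reduces to analyzing the denominator of $\fpt\bigl(\ell_{1,p}^{a_1}\cdots\ell_{n,p}^{a_n}\bigr)$ whenever this quantity is strictly less than the above minimum.

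Next I would invoke the paper's syzygy gap fractal $\Delta$ associated to $(\ell_{1,p},\dots,\ell_{n,p})$. The key feature is that $\Delta$ simultaneously encodes the $F$-pure thresholds of \emph{every} product $\ell_{1,p}^{b_1}\cdots\ell_{n,p}^{b_n}$ via its values on a simplex of normalized exponents, and it enjoys a Frobenius self-similarity: because $p$th powering is $\overline{\FF}_p$-linear on the $\ell_{i,p}$, the syzygy module of a tuple of $p$th powers rescales that of the original tuple, giving a recursion of the shape $\Delta(p\vv{v}) = p\,\Delta(\vv{v})$ on suitable subregions. Iterating this recursion produces a base-$p$ expansion of $\fpt(G_p)$: at each level $e \geq 0$ the orbit either enters a ``smooth'' cell of $\Delta$ where the syzygy gap vanishes (closing the expansion at the expected value $\lct(G_0)$) or passes to a rescaled exponent tuple and iterates.

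The main obstacle is controlling what happens when the iteration fails to resolve at the expected value, \ie\ when $\fpt(G_p)\ne\lct(G_0)$. The plan is to argue that in the two-variable homogeneous setting, any such anomalous orbit must terminate at some level $e \geq 1$ at a boundary cell of $\Delta$ whose $F$-pure threshold reduces to that of a single linear factor $\ell_{j,p}$, namely $\fpt(\ell_{j,p}^{a_j}) = 1/a_j$. Telescoping the base-$p$ expansion then yields $\fpt(G_p) = M/(a_j p^e)$ for some integer $M$, and since $p$ is coprime to each $a_i$ for $p\gg 0$ no factor of $p$ can cancel; the minimal denominator is therefore $kp^e$ with $k\mid a_j$, as claimed. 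The hardest step, I expect, is verifying that the terminal cell of an anomalous orbit genuinely reduces to a single-form vertex of the simplex rather than a hybrid multi-form face whose $\fpt$ could a priori carry a denominator involving sums of the $a_i$; this should hinge on a careful analysis of how boundary cells of $\Delta$ decompose under the Frobenius recursion and on the vanishing of the syzygy gap along lower-dimensional faces.
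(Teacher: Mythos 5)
Your first stage is fine: for $p\gg 0$ the factorization of $G_0$ over $\closure{\QQ}$ descends to $\closure{\FF}_p$ with the same multiplicities (this is the ``good prime'' reduction, Lemma~\ref{lem: finiteness of bad primes}), and your formula $\lct(G_0)=\min\bigl\{1/a_1,\dots,1/a_n,2/\sum a_i\bigr\}$ is correct. The difficulty lives entirely in your second and third stages, and the step you yourself flag as ``the hardest'' is precisely the one that carries the theorem, so as written the proposal has a genuine gap rather than a complete argument. One preliminary issue: the self-similarity you invoke, $\Delta(p\vv{v})=p\,\Delta(\vv{v})$, is not an identity of the syzygy gap fractal. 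The correct statement (SG~\ref{sg: char p}) is $\delta(F^q,G^q,H^q)=q\,\delta(F,G,H)$, which requires raising \emph{all} entries of the cell --- including the two generators of $\idealm$ --- to the $q$th power; it expresses well-definedness of $\delta_C$ on $\QQ_{p^\infty}^n$, not a rescaling recursion on the domain. The actual self-similarity is a subtler statement about unit subcubes and reflections, and your ``orbit'' mechanism is never pinned down enough to extract a base-$p$ expansion from it.

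More importantly, your final step does not deliver the conclusion even granting the orbit picture. Knowing $\fpt(G_p)=M/(a_jp^e)$ for \emph{some} integer $M$ is not enough: coprimality of $p$ with the $a_i$ says nothing about whether $p$ divides $M$, so all factors of $p$ could a priori cancel, leaving a denominator prime to $p$ and no answer to Schwede's question. What the paper proves is sharper: $\fpt(G_p)=\max_i\{c_i/a_i\}$ where $\vv{c}$ is a \emph{critical point} --- a local maximum of $\Dlm$ adjacent to the trivial region --- and (i) every critical point lies in $\QQ_{p^\infty}^n$ (Corollary~\ref{cor: local maxima have coords in ZZ[1/p]}, via Theorem~\ref{thm: CPs and the FT function}), while (ii) for $\idealb=\idealm$ the only critical points having \emph{any} positive integer coordinate are the basis vectors $\canvec_i$, which correspond exactly to the degenerate case $\fpt=1/a_i=\lct$ (Proposition~\ref{prop: characterization of CP} and Remark~\ref{rem: integral CPs w.r.t max ideal}). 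Hence in the anomalous non-degenerate case the controlling coordinate $c_j$ has denominator $p^e$ with $e\ge1$ \emph{in lowest terms}, and the minimal denominator of $c_j/a_j$ is $kp^e$ with $k\mid a_j$. Point (ii) is exactly the dichotomy you worry about (``single-form vertex'' versus ``hybrid multi-form face''), and it is resolved not by analyzing how cells decompose under Frobenius but by the membership characterization $\ell^{\vv{a}}\in\idealm^{[q]}$, $\ell^{\vv{a}-\canvec_i}\notin\idealm^{[q]}$ of critical points together with their minimality in the upper region. Without a substitute for (i) and (ii), the argument does not close.
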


We now briefly describe the main ideas in this article.  
For the remainder of this introduction, $G$ will denote a homogeneous polynomial in $\kk[x,y]$, where $\kk$ is a field of characteristic $p>0$.  
Many of our results deal with a generalization of $F$-pure thresholds called, simply, \emph{$F$-thresholds};  
	this generality pays off later, allowing us to extend our main result to polynomials that are homogeneous under non-standard 
	$\NN$-gradings---see Theorem~\ref{thm: p in denominator QH}.
Given an ideal $\idealb \subseteq \kk[x,y]$,  the $F$-threshold of $G$ with respect to $\idealb$, denoted $\ftb(G)$, is a 
	numerical invariant describing the complexity of the hypersurface defined by $G$.  
$F$-thresholds generalize $F$-pure thresholds, in the sense that $\fpt(G) = \ft^{\ideal{x,y}}(G)$ 
	(see \cite{mustata+takagi+watanabe.F-thresholds}).  
Rather than considering $F$-thresholds with respect to \emph{arbitrary} ideals $\idealb$, we focus instead on the case when 
	$\idealb $ is generated by two non-constant, relatively prime forms.  
The motivation for this restriction is that it allows us to apply the theory of \emph{syzygy gap fractals}, 
	introduced by Han in her thesis \cite{han.thesis} and generalized and studied by the second author in \cite{mythesis,sg1}, and 
	related to the theory of $p$-fractals developed by Monsky and the second author \cite{mythesis, pfractals1, pfractals2}.

Over $\closure{\kk}$ there exists a collection of pairwise prime linear forms $\ell = (\ell_1, \ldots, \ell_n)$ 
	 such that $G = \ell_1^{a_1} \cdots \ell_n^{a_n}$, for some $a_1, \ldots, a_n\in \NNpos$.
In Section~\ref{s: F-threshold function} we define a continuous function $\ftlb{\bullet}: \RRpos^n \to \RR$ with the property that 
	$\vv{k} \mapsto \ftb(\ell_1^{k_1} \cdots \ell_n^{k_n})$ whenever $\vv{k} = (k_1, \ldots, k_n) \in \NNpos^n$.  
This function, called an \emph{$F$-threshold function} (see Definition~\ref{defn: F-threshold function}), encodes the 
	$F$-thresholds with respect to the fixed ideal~$\idealb$ of all homogeneous polynomials with the same linear 
	factors as $G$, and will play a key role in this article.  
The $F$-threshold function is described in terms of a syzygy gap fractal
	attached to the ideal $\idealb$ and the linear forms $\ell_1, \ldots, \ell_n$, and  properties of syzygy gap fractals
	worked out in \cite{sg1} allow us to understand $F$-threshold functions well enough to prove our main results.  
More precisely, we see that the $F$-threshold function  
	(and, hence, the $F$-thresholds of all homogeneous polynomials with the same linear factors as $G$) is completely 
	determined by a family of distinguished points, called \emph{critical points}.  
It turns out that every coordinate of a critical point is a rational number whose denominator is a power of $p$, and it is precisely 
	this fact that allows us to say something about the denominators of $F$-pure thresholds of homogeneous polynomials.

Finally, we point out that our methods are effective, and provide us with an algorithm to compute $F$-pure thresholds of homogeneous
	polynomials in two variables, which has been implemented by the second author in the \emph{Macaulay2} \cite{M2} 
	package \emph{PosChar} \cite{poschar}
	(see Appendix~\ref{appendix: algorithm}).
This implementation is remarkably efficient when the polynomial factors over a relatively small field.
For instance, if $a$ is algebraic over $\FF_5$, satisfying $a^3+a+1=0$, and 
	\[G=x^{420}y^{419}(x+y)^{417}(x+ay)^{390}(x+a^2y)^{402}(x+a^3y)^{438} \in \FF_5(a)[x,y],\] 
	then our current implementation takes only about 0.3 seconds to report that 
	\[\fpt(G)=\frac{46636216675556057485911762783799675605705641779512143}
		{2\cdot 3\cdot 5^{76}\cdot 73}.\]

\subsection{Outline}

This paper is organized as follows.  
In Section~\ref{s:  Background} we recall the basics of base~$p$ expansions of real numbers as well as the properties of 
	syzygy gap fractals needed in this article.  
In Section~\ref{s:  Delta and Phi section} we introduce and study a pair of functions $\Dlb$ and $\Plb$, the former 
	being a special instance of a syzygy gap fractal, and in Section~\ref{s: F-threshold function} we use these 
	functions to define $F$-threshold functions.  
In Section~\ref{s:  CP section} we define the notion of critical points and precisely describe in which ways these points 
	determine the values of $F$-threshold functions.  
In Section~\ref{s:  Sierpinksi} we consider $F$-threshold functions attached to three linear forms.  
In Sections~\ref{s:  fpt} and~\ref{s:  QH case} we apply our methods to the study and computations of $F$-thresholds of polynomials in two variables
	that are homogeneous under either standard or non-standard $\NN$-gradings.

\subsection{Notations and conventions}

The following are some notations and conventions used throughout:
\begin{itemize}
	\item	$p$ denotes a prime number and $q$ always denotes a (variable) power of $p$.
	\item $\kk$ is a field of characteristic $p$.
	\item If $\ideala$ is an ideal of $\kk[x,y]$, then $\ideala^{[q]}$ denotes the $q$th Frobenius power of~$\ideala$, 
			that is, $\ideala^{[q]} \coloneqq  \ideal{ f^q : f \in \ideala}$.
		Also, $\deg \ideala$ denotes the degree or colength of 
			$\ideala$, that is, $\deg \ideala \coloneqq \dim_{\kk}(\kk[x,y] / \ideala)$.  			
	\item The term \emph{form} is used as a synonym for  \emph{nonzero homogenous polynomial}.  
		In this context, $\deg H$ denotes the typical degree of a form $H$ under some fixed $\NN$-grading 
			(usually the standard $\NN$-grading) on the ambient polynomial ring.
	\item	If $S\subseteq \RR$, then $S_q$ denotes the set consisting of rational elements of $S$ with denominator $q$, 
			and $S_{p^\infty}$ denotes the union of all $S_{q}$.
	\item	Vectors in $\RR^n$ are denoted by bold face letters, and their components are denoted by the same letter
			in regular font (\eg $\vv{u}=(u_1,\ldots,u_n)$).
		The canonical basis vectors of $\RR^n$ are denoted by $\canvec_1,\ldots,\canvec_n$.
		The vectors $(0,\ldots,0)$ and $(1,\ldots,1)$ are denoted by $\vv{0}$ and $\vv{1}$.
	\item	Unary operations on real numbers are extended to vectors in a componentwise fashion 
			(\eg $\up{\vv{u}}=(\up{u_1},\ldots,\up{u_n})$).
	\item  Given positive integers $a$ and $d$,  $\lpr{a}{d}$ denotes the \emph{least positive residue} of $a$ modulo $d$; \ie $\lpr{a}{d}$ is the unique integer $1 \leq b \leq d$ such that $a \equiv b \bmod d$.
	
\end{itemize}

\section{Background}
\label{s:  Background}

\subsection{Expansions and truncations}

We review here some terminology and notation concerning real numbers.

\begin{defn}
\label{expansion: D}
By an \emph{expansion} (base $p$) of a real number $0 \leq \lambda \leq 1$, we mean any expression of the form \[ \lambda = \sum_{e=0}^{\infty} \frac{\lambda_e}{p^e},\] where the digits $\lambda_e$ are integers between $0$ and $p-1$.  We call such an expansion \emph{non-terminating} if the sequence of digits $\lambda_e$ is not eventually zero, and \emph{terminating} otherwise.
\end{defn}

\begin{rem}[Comparing expansions]
\label{Comparing expansions: R}  Clearly, the only expansion of $0$ is terminating.  Moreover, a real number $\lambda \in (0,1]$ always has a unique non-terminating expansion, and it also has a (unique) terminating expansion if and only if $\lambda \in \QQ_{p^{\infty}}$.  In this case, the two expansions of $\lambda$ are related as follows:   if $\lambda = \sum_{e=0}^{r} \frac{\lambda_e}{p^e}$ is the unique terminating expansion of $\lambda$, and $\lambda_r \neq 0$, then \[ \lambda = \sum_{0 \leq e < r} \frac{\lambda_e}{p^e} + \frac{\lambda_r-1}{p^r} + \sum_{e > r} \frac{p-1}{p^e}\] is the unique non-terminating expansion of $\lambda$.  
\end{rem}

\begin{defn}  
\label{truncation: D}
Consider a real number $\lambda > 0$.  If $e \geq 0 $ is an integer, we call \[ \tr{\lambda}{e} \coloneqq  \frac{ \up{\lambda p^e} - 1}{p^e} \] the $e$-th \emph{truncation} of $\lambda$ (base $p$). We adopt the convention that $\tr{0}{e} = 0$ for every $e \geq 0$, and given a point $\vv{u} \in \RR^n$ with nonnegative coordinates, we use $\tr{\vv{u}}{e}$ to denote the componentwise truncation of $\vv{u}$.
\end{defn}

\begin{rem}[Characterizations of truncations of positive numbers]
\label{Characterizations of truncations: R} 
Suppose $\lambda > 0$.  It is straightforward to verify that $\tr{\lambda}{e}$ is the unique element of $\QQ_{p^e}$ with \[ \tr{\lambda}{e} < \lambda \leq \tr{\lambda}{e}  + \frac{1}{p^e}.\]  
This leads to an important characterization (and one we will often use without mention):  if $\lambda > 0$ and $\mu \in \QQ_{p^e}$, then $\mu < \lambda$ if and only if $\mu \leq \tr{\lambda}{e}$.  Consequently, if $\lambda>0$,  then $\lambda = \tr{\lambda}{e} + \frac{1}{p^e}$ if and only if $\lambda \in \QQ_{p^e}$.
\end{rem}

\begin{rem}[Truncations in terms of expansions]
\label{truncations and expansions: R}
Fix the unique non-terminating expansion $\lambda = \sum_{e=1}^{\infty} \frac{\lambda_e}{p^e}$ of a real number $0 < \lambda \leq 1$.  
For such an expansion, the $s$th tail $\tau = \sum_{e>s} \frac{\lambda_e}{p^e}$ lies in $(0,1/p^s]$, and therefore
\[ \up{\lambda p^s} = \up{ \left( \sum_{e=1}^s \frac{\lambda_e}{p^e} \right) p^s +  \tau p^s} = \left( \sum_{e=1}^s \frac{\lambda_e}{p^e} \right) p^s  + \up{\tau p^s} = \left( \sum_{e=1}^s \frac{\lambda_e}{p^e} \right) p^s  + 1. \]
In other words, 
\[ \tr{\lambda}{s} = \frac{\lambda_1}{p} + \cdots + \frac{\lambda_s}{p^s}.\]
From this expression, we see that the truncations 
	$\tr{\lambda}{s}$ form a non-decreasing sequence that converges to $\lambda$.
\end{rem}

\begin{rem}[Truncations of rational numbers] 
\label{truncations of rationals: rem} Given positive integers $a$ and $d$, 
\[ \frac{a}{d} \cdot p^e = \frac{ ap^e - \lpr{ap^e}{d}}{d} + \frac{\lpr{ap^e}{d}}{d}, \] 
where $\lpr{ap^e}{d}$ denotes the least positive residue of $ap^e$ modulo $d$.
Since we are dealing with least \emph{positive} residues, the second summand on the right-hand side of the above equation
	lies in $(0,1]$, while the first summand is an integer.
Thus, substituting the above equation into Definition \ref{truncation: D} shows that
\[ \tr{\frac{a}{d}}{e} = \frac{a}{d} - \frac{\lpr{ap^e}{d}}{dp^e}.\]
Basic properties of congruences show that this expression depends only on  $a/d$, and not on the choice of the numerator and denominator.
\end{rem}
	 
\subsection{Syzygy gap fractals}

We gather here some definitions and results concerning syzygy gaps and syzygy gap fractals from \cite{sg1} and adapt
	them to suit our needs.   
In what follows, $F,G,H\in R\coloneqq \kk[x,y]$ are forms with no common factor---that is, there is no non-constant polynomial in 
	$\kk[x,y]$ that divides all of $F$, $G$, and $H$.

\begin{defn}\label{defn: syzygy gap definition}
  	Let $M = R(-\deg F) \oplus R(-\deg G) \oplus R(-\deg H)$, so that, by the Hilbert Syzygy Theorem, there exists an 
		exact sequence of graded $R$-modules 
		\[ 0 \to R(-m) \oplus R(-n) \to M \to R \to R / \ideal{F,G,H} \to 0. \]
	The \emph{syzygy gap} of $F$, $G$, and $H$ is the nonnegative integer $\delta(F,G,H)\coloneqq\abs{m-n}$.
\end{defn}

Syzygy gaps are easily computed using \emph{Macaulay2} \cite{M2} or similar software.
In \emph{Macaulay2}, the following code defines a function \texttt{delta} that computes syzygy gaps:
\lstset{basicstyle=\small\ttfamily}
\begin{lstlisting}
delta := (F,G,H) -> (                                                
       M:= ker matrix {{F,G,H}};                              
       d:= degrees source generators M;                       
       abs(d_0_0-d_1_0)               
)
\end{lstlisting}

Perhaps one of the most important aspects of syzygy gaps is their relation with the degrees of certain ideals, which we recall below.

\begin{sg}[{\cite[Proposition~2.2]{sg1}, \cite[Lemma~1(2)]{monsky.mason}}]\label{sg: relation between degree and delta}
	The syzygy gap $\delta(F,G,H)$ and the degree of the ideal $\ideal{F,G,H}$ are related as follows\textup: 
		\[4\deg\ideal{F,G,H}=Q(\deg F,\deg G,\deg H) +\delta(F,G,H)^2,\] 
		where  $Q(a,b,c)  =2ab+2ac+2bc-{a}^2-{b}^2-{c}^2$.
\end{sg}

The proof of SG~\ref{sg:  relation between degree and delta} relies on the fact that the Hilbert series of  $R/\ideal{F,G,H}$
	(and, consequently, $\deg \ideal{F,G,H}$) can be calculated from the free resolution of $R/\ideal{F,G,H}$ appearing in 
	Definition~\ref{defn: syzygy gap definition}.  
Though we omit the proof of SG~\ref{sg: relation between degree and delta}, we use a similar idea to establish the following identity.

\begin{lem}\label{lem: deg <F,G> = deg F deg G}
	Let $U, V\in \kk[x,y]$ be relatively prime forms.
	Then $\deg \ideal{U,V} = \deg U \deg V$.  
\end{lem}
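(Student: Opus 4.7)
The plan is to mimic the Hilbert-series argument underlying SG~\ref{sg: relation between degree and delta}, which is much simpler here because two relatively prime forms in a polynomial ring in two variables automatically constitute a regular sequence. Concretely, since $\kk[x,y]$ is a two-dimensional Cohen--Macaulay ring and $U,V$ share no common factor, the Koszul complex on $(U,V)$ is exact, yielding a minimal graded free resolution
\[
0 \to R(-\deg U - \deg V) \xrightarrow{\begin{pmatrix} -V \\ U \end{pmatrix}} R(-\deg U)\oplus R(-\deg V) \xrightarrow{\begin{pmatrix} U & V \end{pmatrix}} R \to R/\ideal{U,V} \to 0.
\]

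From this resolution I would compute the Hilbert series of $R/\ideal{U,V}$ by taking the alternating sum of the Hilbert series of the free modules, obtaining
\[
H_{R/\ideal{U,V}}(t) = \frac{1 - t^{\deg U} - t^{\deg V} + t^{\deg U+\deg V}}{(1-t)^2} = \frac{(1-t^{\deg U})(1-t^{\deg V})}{(1-t)^2}.
\]
Factoring $1-t^k = (1-t)(1+t+\cdots+t^{k-1})$ shows this is the polynomial $(1+t+\cdots+t^{\deg U - 1})(1+t+\cdots+t^{\deg V - 1})$, so $R/\ideal{U,V}$ is in particular finite-dimensional over $\kk$, and evaluating at $t=1$ gives $\deg\ideal{U,V} = \dim_\kk R/\ideal{U,V} = \deg U \cdot \deg V$.

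The only step that requires a moment's thought is verifying exactness of the Koszul complex, \ie that $U,V$ form a regular sequence. But this is immediate: $U$ is a nonzerodivisor on $R$ since $R$ is a domain, and $V$ is a nonzerodivisor on $R/\ideal{U}$ because $R/\ideal{U}$ is a one-dimensional domain (as $U$ is irreducible would suffice, but more generally $R/\ideal{U}$ has no embedded primes and $V$ does not lie in any minimal prime of $\ideal{U}$, since such primes are generated by the irreducible factors of $U$, none of which divides $V$ by hypothesis). So no real obstacle arises; the lemma is essentially a two-line consequence of the Koszul resolution.
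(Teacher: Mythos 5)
Your proof is correct and follows essentially the same route as the paper: both resolve $R/\ideal{U,V}$ by the Koszul complex on $(U,V)$, read off the Hilbert series $\bigl(1-t^{\deg U}-t^{\deg V}+t^{\deg U+\deg V}\bigr)/(1-t)^2$, and evaluate at $t=1$ (the paper via l'H\^opital, you via factoring the numerator as $(1-t^{\deg U})(1-t^{\deg V})$ --- a cosmetic difference). Your explicit verification that $U,V$ form a regular sequence is a point the paper leaves implicit, but the argument is the same.
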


\begin{proof}
	Set $u=\deg U$ and $v=\deg V$.  
	As $U$ and $V$ are relatively prime, the sequence
		\[ 0 \to R(-u-v) \to R(-u) \oplus R(-v) \to R \to R/\ideal{U,V} \to 0,\] 
		in which the second map is given by $1 \mapsto (V,-U)$ and the third map by $(A,B) \mapsto AU+BV$, is exact.  
	If $\operatorname{Hilb}(t)$ denotes the Hilbert series of $R/\ideal{U,V}$, then the above exact sequence shows that 
		\[ \operatorname{Hilb}(t) = \frac{1-t^{u}-t^{v} + t^{u+v}}{(t-1)^2}.\]  
	Applying l'H\^opital's rule twice, we find that 
		$\operatorname{Hilb}(1) = uv=\deg U \deg V$, and the lemma follows, as 
		$\deg \ideal{U,V} = \dim_{\kk} R/\ideal{U,V} = \operatorname{Hilb}(1)$.
\end{proof}

\begin{cor}\label{cor: restated relation between degree and delta}
	If $F$ and $G$ are relatively prime, then 
		\[\delta(F,G,H)^2 = 4 ( \deg \ideal{F,G,H} - \deg \ideal{F,G} ) + ( \deg H - \deg FG)^2.\]
\end{cor}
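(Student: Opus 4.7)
The plan is to derive the identity directly from SG~\ref{sg: relation between degree and delta} and Lemma~\ref{lem: deg <F,G> = deg F deg G}, with no further geometric input; once those two inputs are in place, what remains is a polynomial identity in the three degrees involved. First, I would rewrite SG~\ref{sg: relation between degree and delta} in the form
\[
\delta(F,G,H)^2 \;=\; 4\deg\ideal{F,G,H} - Q(\deg F,\deg G,\deg H),
\]
where $Q(a,b,c)=2ab+2ac+2bc-a^2-b^2-c^2$. Then, since $F$ and $G$ are relatively prime by hypothesis, Lemma~\ref{lem: deg <F,G> = deg F deg G} applies and gives $\deg F\cdot \deg G = \deg\ideal{F,G}$, so I may substitute $4\deg\ideal{F,G}$ for one copy of $4ab$ inside $-Q$.

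Writing $a=\deg F$, $b=\deg G$, $c=\deg H$, the corollary is then equivalent to the algebraic identity
\[
-Q(a,b,c) + 4ab \;=\; (c-a-b)^2.
\]
Expanding, the left side is $a^2+b^2+c^2-2ab-2ac-2bc+4ab = a^2+b^2+c^2+2ab-2ac-2bc$, and expanding $(c-a-b)^2 = c^2-2c(a+b)+(a+b)^2$ gives exactly the same expression. So the identity holds and the corollary follows.

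There is no real obstacle here; everything reduces to elementary algebra after the two preceding results are invoked. The only thing requiring any care is bookkeeping, namely tracking which of the three $2ab$-type cross-terms in $-Q$ should be absorbed into the expression $4\deg\ideal{F,G}$ produced by Lemma~\ref{lem: deg <F,G> = deg F deg G}, and which should remain on the right to complete the square $(c-a-b)^2$.
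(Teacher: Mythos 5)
Your proof is correct and follows exactly the paper's route: rearrange SG~\ref{sg: relation between degree and delta} to isolate $\delta(F,G,H)^2$, verify the polynomial identity $-Q(a,b,c)+4ab=(c-a-b)^2$, and then invoke Lemma~\ref{lem: deg <F,G> = deg F deg G} to replace $\deg F\deg G$ by $\deg\ideal{F,G}$. The algebra checks out, so nothing further is needed.
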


\begin{proof}
	Standard algebraic manipulations of SG~\ref{sg: relation between degree and delta} produce the identity 
		\[ \delta(F,G,H)^2 = 4 ( \deg \ideal{F,G,H} - \deg F \deg G ) + ( \deg H - \deg FG )^2,\] 
		and the claim then follows from Lemma~\ref{lem: deg <F,G> = deg F deg G}.
\end{proof}

\begin{sg}[{\cite[Remark~2.6]{sg1}, \cite[Lemma~2(1)]{monsky.mason}}]\label{sg: triangle ineqs}
	If $\deg H\ge \deg F +\deg G$ and $F$ and  $G$ are relatively prime,  then   $\delta(F,G,H)=\deg H -\deg F -\deg G.$
\end{sg}

\begin{sg}[{\cite[Proposition~2.7(2)]{sg1}, \cite[Lemma~2(2)]{monsky.mason}}]\label{sg: factor prime to H}   
	If a form $P\in \kk[x,y]$ is prime to $H$, then  $\delta(PF,PG,H)=\delta(F,G,H)$.
\end{sg}

\begin{sg}[{\cite[Equation~(2)]{sg1}, \cite[Lemma~2(3)]{monsky.mason}}]\label{sg: inserting a linear form}
	If $\ell\in \kk[x,y]$ is a linear form, then $\delta(F,G,H\ell)=\delta(F,G,H)\pm 1$.
\end{sg}

\begin{sg}[{\cite[Proposition~2.12]{sg1}}]\label{sg: 2Dconv}
	Let $\ell_{1}$ and $\ell_{2}$ be relatively prime linear forms, such that $F$, $G$ and $H\ell_{1}\ell_{2}$ have no common factor. 
	Suppose that $\delta(F,G,H)=\delta(F,G,H\ell_{1}\ell_{2})$ and $\delta(F,G,H\ell_{1})=\delta(F,G,H\ell_{2})$. 
	Then either $\delta(F,G,H)=0$ or $\delta(F,G,H\ell_{1})=0$.
\end{sg}

While the above results hold in arbitrary characteristic, from this point on the assumption that $\kk$ is a field of positive 
	characteristic $p$ will become essential.
Due to the flatness of the Frobenius map over $\kk[x,y]$, we have the following:

\begin{sg}[{\cite[Equation~(3)]{sg1}}]\label{sg: char p}
	For each $q=p^e$ we have 
		\[\delta(F^q,G^q,H^q)=q\cdot \delta(F,G,H).\]
\end{sg}

\begin{defn}
	Let $\ell_1,\ldots,\ell_n\in \kk[x,y]$ be pairwise prime linear forms.
	A \emph{cell} (with respect to $\ell_1,\ldots,\ell_n$)  is a triple of forms $C=\cell{F,G,H}$ 
		such that $F$, $G$, and $H\ell_1\cdots\ell_n$ have no common factor. 
	If $C=\cell{F,G,1}$, we shall dispense with the third component and simply write $C=\cell{F,G}$.	
\end{defn}

In the remainder of this section, $\ell_1,\ldots,\ell_n\in \kk[x,y]$ are fixed pairwise prime linear forms, $\ell=(\ell_1,\ldots,\ell_n)$, 
	and $C$ is a cell $\cell{F,G,H}$ with respect to the $\ell_i$.

\begin{notation}
	For each $\vv{a}=(a_1,\ldots,a_n)\in \NN^n$,  $\ell^\vv{a}$ denotes the product $\ell_1^{a_1}\cdots\ell_n^{a_n}$.
\end{notation}

\begin{defn}
	The \emph{syzygy gap fractal} $\delta_C: \infint^n\to\mathbb{Q}$ is defined as follows:
	\begin{equation*}
		\delta_C\left(\frac{\vv{a}}{q}\right)=\frac{1}{q}\cdot \delta(F^q,G^q,H^q\ell^\vv{a}),
	\end{equation*}
	for each $q$ and each  $\vv{a}\in \NN^n$.
	(SG~\ref{sg: char p} shows that this is well defined.)
\end{defn}

In \cite{sg1} these functions were defined on $\unitint^n$, but it will be convenient in this paper to extend them to $\infint^n$.

\begin{notation}
	The  \emph{taxicab metric} and \emph{norm} on $\RR^n$ are denoted by $d$ and $\norm{\cdot}$.
	That is, $d(\vv{t},\vv{u})=\sum_{i=1}^n\abs{t_i-u_i}$ and $\norm{\vv{t}}=\sum_{i=1}^n\abs{t_i}$, for each $\vv{t},\vv{u}\in\RR^n$.
\end{notation}

SG~\ref{sg: inserting a linear form} gives us the following result:

\begin{sg}[{\cite[Proposition~4.2]{sg1}}]\label{sg: lipschitz}
	For each $\vv{t},\vv{u}\in \infint^n$ we have
		\[\abs{\delta_C(\vv{t})-\delta_C(\vv{u})}\le d(\vv{t},\vv{u}).\]  
\end{sg}
 
This shows that $\delta_C$ is uniformly continuous, so it extends (uniquely) to a continuous function $\RRnn^n\to \RR$.
\textbf{Henceforth, $\boldsymbol{\delta_C}$ will denote this extension.}
 		
The next three results were stated in \cite{sg1} for the original $\delta_C$, defined on $\unitint^n$,  but also hold for the extension to $\infint^n$ 
	(with identical proofs, with one exception noted below) and extend to $\delta_C:\RRnn^n\to \RR$, via density and continuity.

\begin{sg}[{\cite[Proposition~3.4]{sg1}}]\label{sg: simple cells}	
	For each cell $\cell{F,G,H}$, there exists a cell $\cell{U,V}$ such that $\delta_{\cell{F,G,H}}=\delta_{\cell{U,V}}$.
\end{sg}

\begin{defn}
	Two points of $(\QQnn)_q^n$ are \emph{adjacent} if they differ by 
		$\pm\canvec_i/q$, for some $i$, where $\canvec_1,\ldots,\canvec_n$ denote the canonical basis vectors.
	Equivalently, two points of $(\QQnn)_q^n$ are adjacent if the taxicab distance between them is $1/q$. 
\end{defn}

\begin{sg}[{\cite[Theorem~II]{sg1}}]\label{sg: local max}
	Suppose the restriction of  $\delta_C$ to $(\QQnn)_q^n$ attains a local maximum at  $\vv{u}_0$, in the sense that the values 
		of $\delta_C$ at all  points of $(\QQnn)_q^n$ adjacent to $\vv{u}_0$ are  smaller  than $\delta_C(\vv{u}_0)$.  
	Then 
		\[\delta_C(\vv{t})=\delta_C(\vv{u}_0)-d(\vv{t},\vv{u}_0),\]
		for all $\vv{t}\in \RRnn^{n}$ with $d(\vv{t},\vv{u}_0)\le \delta_C(\vv{u}_0)$.  
	In particular, $\delta_C$ is piecewise linear on that region and has a local maximum at $\vv{u}_0$ in the usual sense. 
\end{sg}

The following result was first obtained by Monsky in the case where $C=\cell{x,y}$ \cite[Corollary~9]{monsky.mason}, 
	and subsequently generalized by the second author:

\begin{sg} [{\cite[Theorem~III]{sg1}}]\label{sg: upper bound}
	Suppose $\delta_C$ has a local maximum at $\vv{a}/q$, where $q>1$ and $\vv{a}\in \NN^n$ has some coordinate not divisible by $p$. 	
	Then    
		\[\delta_C\left(\frac{\vv{a}}{q}\right)\le \frac{n-2}{q}.\] 
\end{sg}

As the proof of \cite[Theorem~III]{sg1} uses the fact that the $\delta_C$ are defined on a unit hypercube in an essential way, 
	relying on symmetry and reflections, this requires explanation. 
Let $\vv{a}/q$ be as in the above statement. 
Set $\vv{b}=\down{\vv{a}/q}$ and $C_\vv{b}=\cell{F,G,H\ell^\vv{b}}$.
Then $\delta_C(\vv{t})=\delta_{C_\vv{b}}(\vv{t}-\vv{b})$ for each $\vv{t}$ with $\vv{t}-\vv{b}\in \RRnn^n$, 
	so $\delta_{C_\vv{b}}$ has a local maximum at $\vv{a}/q-\vv{b}$.
Since some $a_i$ is prime to $p$, so is  the corresponding numerator $a_i-b_iq$ of $\vv{a}/q-\vv{b}$. 
Thus, the hypotheses of \cite[Theorem~III]{sg1} hold for $\delta_{C_\vv{b}}$ at the point $\vv{a}/q-\vv{b}\in [0,1]_q^n$, 
	and that result shows that $\delta_C(\vv{a}/q)=\delta_{C_\vv{b}}(\vv{a}/q-\vv{b})\le(n-2)/q$.

\begin{rem}\label{rem: trivial region}
	The function $\delta_C$ is linear outside a bounded subset of $\RRnn^n$.
	To see that, we may assume that $C=\cell{U,V}$, by SG~\ref{sg: simple cells}, and SG~\ref{sg: factor prime to H} allows us to assume that 
		$U$ and $V$ are relatively prime;
		SG~\ref{sg: triangle ineqs} then shows  that $\delta_{C}(\vv{a}/q)=\norm{\vv{a}/q}-\deg UV$ whenever  $\norm{\vv{a}/q}\ge \deg UV$.  
	By continuity, $\delta_C(\vv{t})=\norm{\vv{t}}-\deg UV$, for each $\vv{t}\in \RRnn^n$  with $\norm{\vv{t}}\ge \deg UV$.
\end{rem}

\begin{defn}
	The \emph{trivial region} of $\delta_C$ is the set $\{\vv{t}\in \RRnn^n : \norm{\vv{t}}\ge \deg UV\}$, where 
		$U$ and $V$ are forms such that $\delta_C=\delta_{(U,V)}$ (see SG~\ref{sg: simple cells}).
 	The complement of the trivial region in $\RRnn^n$ is the \emph{nontrivial region} of $\delta_C$.
\end{defn}

We close  this section with some consequences of SG~\ref{sg: local max}.

\begin{prop}
	Suppose $\vv{t}_0$ lies in the nontrivial region of $\delta_C$ and $\delta_C(\vv{t}_0)>0$.
	Then $\delta_C$ attains a local maximum at a point $\vv{u}_0\in \QQ_{p^\infty}^n$ with $d(\vv{t}_0,\vv{u}_0)<\delta_C(\vv{u}_0)$.
	In particular,  $\delta_C(\vv{t})= \delta_C(\vv{u}_0)-d(\vv{t},\vv{u}_0)$ on a neighborhood of $\vv{t}_0$.
\end{prop}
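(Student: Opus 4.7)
The plan is to find a strict local maximum $\vv{u}_0\in (\QQnn)_q^n$ of $\delta_C|_{(\QQnn)_q^n}$ (for some $q=p^e$) that satisfies $d(\vv{t}_0,\vv{u}_0)<\delta_C(\vv{u}_0)$.  Granted such a $\vv{u}_0$, SG~\iref{sg: local max} gives $\delta_C(\vv{t})=\delta_C(\vv{u}_0)-d(\vv{t},\vv{u}_0)$ on the closed taxicab ball of radius $\delta_C(\vv{u}_0)$ about $\vv{u}_0$; the strict inequality places $\vv{t}_0$ in the interior of this ball, so the identity holds on an open neighborhood of $\vv{t}_0$, yielding both assertions.

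To locate $\vv{u}_0$, I first reduce via SG~\iref{sg: simple cells} and SG~\iref{sg: factor prime to H} to $C=\cell{U,V}$ with $U,V\in\kk[x,y]$ relatively prime, so that Remark~\iref{rem: trivial region} gives $\delta_C(\vv{u})=\norm{\vv{u}}-\deg UV$ on the trivial region $\{\norm{\vv{u}}\ge \deg UV\}$ and forces $\norm{\vv{t}_0}<\deg UV$ since $\vv{t}_0$ is nontrivial.  Set
\[
\Omega\coloneqq \{\vv{u}\in\RRnn^n: d(\vv{u},\vv{t}_0)<\delta_C(\vv{u})\},
\]
which contains $\vv{t}_0$ by hypothesis.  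The reverse triangle inequality $d(\vv{u},\vv{t}_0)\ge \norm{\vv{u}}-\norm{\vv{t}_0}$ combined with the trivial-region formula shows that $\Omega$ is disjoint from the trivial region and hence bounded.  By SG~\iref{sg: lipschitz} and $\delta_C(\vv{t}_0)>0$, for $q=p^e$ sufficiently large some nearest lattice point $\vv{a}/q\in (\QQnn)_q^n$ to $\vv{t}_0$ already lies in $\Omega$.  The key mechanical fact is that $\Omega\cap (\QQnn)_q^n$ is closed under strict hill-climbing: since $\delta_C$ takes values in $(1/q)\ZZ$ on $(\QQnn)_q^n$, any lattice neighbor $\vv{v}$ of $\vv{u}\in\Omega$ with $\delta_C(\vv{v})>\delta_C(\vv{u})$ satisfies $\delta_C(\vv{v})-\delta_C(\vv{u})\ge 1/q\ge d(\vv{v},\vv{t}_0)-d(\vv{u},\vv{t}_0)$, so $\vv{v}\in\Omega$.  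Because $\Omega\cap (\QQnn)_q^n$ is finite, strict hill-climbing from $\vv{a}/q$ terminates at some $\vv{u}_0\in\Omega$ admitting no lattice neighbor of strictly larger $\delta_C$-value.

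The main obstacle is to upgrade $\vv{u}_0$ to a \emph{strict} local maximum in the sense of SG~\iref{sg: local max}---that is, to rule out adjacent lattice points whose $\delta_C$-value equals $\delta_C(\vv{u}_0)$.  I plan to handle this by choosing $\vv{u}_0$ more carefully among the admissible terminal points (for instance, by taking an argmax of $\delta_C$ over $\Omega\cap (\QQnn)_q^n$ and then breaking ties by minimizing $d(\cdot,\vv{t}_0)$), and invoking SG~\iref{sg: upper bound} to force any remaining equal-valued neighbors to correspond to a coarser denominator $q_0\mid q$ at which $\vv{u}_0$ is genuinely a strict local maximum at the scale of its minimal denominator.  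Once such a strict local maximum $\vv{u}_0\in\QQ_{p^\infty}^n$ is secured inside $\Omega$, the conclusion follows exactly as in the first paragraph.
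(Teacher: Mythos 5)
Your construction up through the hill-climbing step is correct, and it is in fact a close cousin of the paper's own argument: the paper picks a nearby lattice point $\vv{u}$ with $\delta_C(\vv{u})-d(\vv{u},\vv{t}_0)>0$, takes the maximum of $\delta_C$ over the (finite) adjacency-equivalence class of $\vv{u}$ inside $\{\delta_C>0\}\cap(\QQnn)_q^n$, and then checks $d(\vv{t}_0,\vv{u}_0)<\delta_C(\vv{u}_0)$ by the triangle inequality, whereas you build that inequality into the invariant defining $\Omega$ and hill-climb. Either route works, and your verification that climbing preserves membership in $\Omega$ is correct.

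The problem is your final paragraph: the ``main obstacle'' you flag there does not exist, and the workaround you sketch (argmax plus tie-breaking plus SG~\ref{sg: upper bound}) is never actually carried out, so as written the proof is incomplete at exactly the step you defer. The fix is immediate from a fact already in your toolbox: by SG~\ref{sg: inserting a linear form} (applied at level $q$ via SG~\ref{sg: char p}), two adjacent points of $(\QQnn)_q^n$ have $\delta_C$-values differing by \emph{exactly} $1/q$ --- equal-valued neighbors are impossible. Consequently the terminal point $\vv{u}_0$ of your hill-climb, having no neighbor of strictly larger value, automatically has every neighbor of value $\delta_C(\vv{u}_0)-1/q<\delta_C(\vv{u}_0)$, so it is a local maximum in precisely the sense required by SG~\ref{sg: local max}. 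With that one observation your first paragraph already closes the argument, and the entire final paragraph should simply be deleted.
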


\begin{proof}
	Using the continuity of the function $\vv{t}\mapsto \delta_C(\vv{t})-d(\vv{t},\vv{t}_0)$ and the density of $\QQ_{p^\infty}^n$ in $\RR^n$, 
		choose $\vv{u}$ in some $\QQ_q^n$  such that $\delta_C(\vv{u})-d(\vv{u},\vv{t}_0)>0$.
	Then $\delta_C(\vv{u})>d(\vv{u},\vv{t}_0)\ge 0$, and SG~\ref{sg: lipschitz} shows that $\delta_C>0$ on the line segment joining~$\vv{u}$ and 
		$\vv{t}_0$.
	Since $\delta_C$ vanishes on the boundary of its trivial region (see Remark~\ref{rem: trivial region}), 
		this shows that~$\vv{u}$ is also in the nontrivial region.
	Consider the equivalence relation on the set $S=\{\vv{v}\in (\QQnn)_q^n:\delta_C(\vv{v})>0\}$ generated by adjacency---that is, 
		the smallest equivalence relation on $S$ containing all adjacent pairs $(\vv{v},\vv{v}')\in S^2$. 
	We shall show that the equivalence class of $\vv{u}$ is contained in the nontrivial region, and is therefore finite.	
	For that, it suffices to show that points  $\vv{z},\vv{w}\in S$, one in the trivial region and the other in the nontrivial region,  
		cannot be adjacent.	
	Indeed, if they were adjacent, then $\abs{\delta_C(\vv{z})-\delta_C(\vv{w})}= 1/q=d(\vv{z},\vv{w})$, by SG~\ref{sg: inserting a linear form}, 
		and SG~\ref{sg: lipschitz} would imply that $\delta_C$ is linear on the line segment joining  $\vv{z}$ and $\vv{w}$.
	This is impossible, since $\delta_C(\vv{z})$ and $\delta_C(\vv{w})$ are both 
		positive, and $\delta_C$ vanishes on the boundary of the trivial region.

	Since the equivalence class of $\vv{u}$ is finite, we can choose a point   $\vv{u}_0$ in this class where $\delta_C$ is maximum.
	Then SG~\ref{sg: local max} shows that $\delta_C$ attains a local maximum at $\vv{u}_0$ and that $\delta_C(\vv{u})=
		\delta_C(\vv{u}_0)-d(\vv{u},\vv{u}_0)$.
	To complete the proof, note that $d(\vv{t}_0,\vv{u}_0)\le d(\vv{t}_0,\vv{u})+d(\vv{u},\vv{u}_0)<
		\delta_C(\vv{u})+(\delta_C(\vv{u}_0)-\delta_C(\vv{u}))=\delta_C(\vv{u}_0)$.
\end{proof}

\begin{rem}\label{rem: don't need to go to finer mesh to find local max}
	The above proof shows that if $\vv{u}\in \QQ_q^n$ lies in the nontrivial region of $\delta_C$ and $\delta_C(\vv{u})>0$, 
		then the local maximum $\vv{u}_0$ that determines the behavior of  $\delta_C$ near $\vv{u}$ \emph{is also in $\QQ_q^n$}.
\end{rem}

Two corollaries follow immediately: 

\begin{cor}
	The function $\delta_C$ is piecewise linear with coefficients in $\QQ_{p^\infty}$ on each connected component of 
		its positive locus.
\qed	
\end{cor}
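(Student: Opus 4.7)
The plan is to verify the conclusion locally at each point $\vv{t}_0$ of the positive locus, splitting into two cases according to whether $\vv{t}_0$ lies in the trivial or the nontrivial region of $\delta_C$.

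For $\vv{t}_0$ in the trivial region, I would first apply SG~\ref{sg: simple cells} and SG~\ref{sg: factor prime to H} to reduce to a cell $\cell{U,V}$ with $U$ and $V$ relatively prime, after which Remark~\ref{rem: trivial region} yields $\delta_C(\vv{t})=\norm{\vv{t}}-\deg UV$ on the trivial region. Since $\vv{t}\in \RRnn^n$ one has $\norm{\vv{t}}=\sum_i t_i$, so on any neighborhood of $\vv{t}_0$ contained in the trivial region the function $\delta_C$ is affine linear with integer coefficients, which certainly lie in $\QQ_{p^\infty}$.

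For $\vv{t}_0$ in the nontrivial region, the preceding proposition supplies a local maximum point $\vv{u}_0=\vv{a}/q\in \QQ_{p^\infty}^n$ together with a neighborhood $N$ of $\vv{t}_0$ on which $\delta_C(\vv{t})=\delta_C(\vv{u}_0)-d(\vv{t},\vv{u}_0)$. By the very definition of the syzygy gap fractal, $\delta_C(\vv{u}_0)=q^{-1}\delta(F^q,G^q,H^q\ell^{\vv{a}})\in \QQ_{p^\infty}$. Shrinking $N$ if necessary, the finitely many coordinate hyperplanes $t_i=u_{0,i}$ partition $N$ into subregions on each of which $d(\vv{t},\vv{u}_0)=\sum_i \epsilon_i (t_i-u_{0,i})$ for a fixed sign vector with $\epsilon_i\in\{\pm1\}$; hence $\delta_C$ agrees on each subregion with an affine function whose slopes are $\pm 1$ and whose constant term $\delta_C(\vv{u}_0)+\sum_i \epsilon_i u_{0,i}$ lies in $\QQ_{p^\infty}$.

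No substantive obstacle remains: the preceding proposition does the real work, and the corollary reduces to unpacking the taxicab distance formula while tracking that both the base point $\vv{u}_0$ and the value $\delta_C(\vv{u}_0)$ have $p$-power denominators.
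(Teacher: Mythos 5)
Your proof is correct and is exactly the argument the paper has in mind: the corollary is stated without proof as an immediate consequence of the preceding proposition (for the nontrivial region) together with Remark~\ref{rem: trivial region} (for the trivial region), and your write-up simply makes that explicit by noting that $\delta_C(\vv{u}_0)\in\QQ_{p^\infty}$ and unpacking the taxicab distance into affine pieces with slopes $\pm1$.
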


\begin{cor}\label{cor: local maxima have coords in ZZ[1/p]}
	The local maxima of $\delta_C$ are attained at points in $\QQ_{p^\infty}^n$.
\qed	
\end{cor}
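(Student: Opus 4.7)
My plan is to deduce this corollary directly from the preceding Proposition via a short case analysis on the value $\delta_C(\vv{t}_0)$ at a local maximum $\vv{t}_0 \in \RRnn^n$.

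First, suppose $\delta_C(\vv{t}_0)>0$. I would begin by ruling out the trivial region. After using SG~\ref{sg: simple cells} and SG~\ref{sg: factor prime to H} to replace $C$ by a cell $\cell{U,V}$ with $U,V$ relatively prime, Remark~\ref{rem: trivial region} gives $\delta_C(\vv{t}) = \norm{\vv{t}} - \deg UV$ throughout the trivial region, an expression strictly increasing in each coordinate and therefore admitting no local maxima. Hence $\vv{t}_0$ lies in the nontrivial region, and the preceding Proposition supplies a point $\vv{u}_0 \in \QQ_{p^\infty}^n$ together with a neighborhood $N$ of $\vv{t}_0$ on which $\delta_C(\vv{t}) = \delta_C(\vv{u}_0) - d(\vv{t}, \vv{u}_0)$. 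On $N$ the value of $\delta_C$ is maximized precisely where the taxicab distance to $\vv{u}_0$ is minimized, so if $\vv{t}_0 \ne \vv{u}_0$ one could move from $\vv{t}_0$ along a shortest path toward $\vv{u}_0$ (which stays in $\RRnn^n$ by convexity) and strictly increase $\delta_C$, contradicting the local maximum property. This forces $\vv{t}_0 = \vv{u}_0 \in \QQ_{p^\infty}^n$.

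Second, if $\delta_C(\vv{t}_0)=0$, then because $\delta_C \ge 0$ on $\RRnn^n$ the point $\vv{t}_0$ is simultaneously a local minimum, so $\delta_C$ vanishes identically on a neighborhood of $\vv{t}_0$. Any point of $\QQ_{p^\infty}^n$ inside this neighborhood (of which there are many, by the density of $\QQ_{p^\infty}^n$ in $\RR^n$) is then itself a local maximum of $\delta_C$ attaining the value $0$, so the conclusion still holds in this degenerate case.

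The step that requires the most care is the case-1 exclusion of the trivial region together with the verification that the linear formula on $N$ truly forces $\vv{t}_0 = \vv{u}_0$; once these are handled, the corollary is essentially bookkeeping with previously established results, and I do not expect any substantial obstacle beyond this point.
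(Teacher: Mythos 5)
Your Case 1 is correct and is exactly the argument the paper intends when it marks this corollary as immediate from the preceding Proposition: a value\nobreakdash-positive local maximum cannot lie in the trivial region (where $\delta_C=\norm{\vv{t}}-\deg UV$ is strictly increasing in each coordinate), so the Proposition applies, and the exact formula $\delta_C(\vv{t})=\delta_C(\vv{u}_0)-d(\vv{t},\vv{u}_0)$ near $\vv{t}_0$ forces $\vv{t}_0=\vv{u}_0\in\QQ_{p^\infty}^n$, since otherwise sliding along the segment toward $\vv{u}_0$ (which stays in $\RRnn^n$ and, for small steps, in the neighborhood) strictly increases $\delta_C$. The only gloss here is your implicit identification of ``the trivial region'' (the largest unbounded closed set on which $\delta_C$ is linear) with $\{\norm{\vv{t}}\ge\deg UV\}$; this is harmless, since an affine function agreeing with $\norm{\vv{t}}-\deg UV$ on the unbounded piece must equal it everywhere on the trivial region, and $\delta_C\ge 0$ then forces the trivial region into the half-space $\norm{\vv{t}}\ge\deg UV$.

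Case 2 is where your argument does not deliver the statement in the form the paper actually uses it. The corollary is later invoked to conclude that \emph{every} critical point --- a specific point at which a local maximum is attained --- lies in $\QQ_{p^\infty}^n$; producing some \emph{other} nearby point of $\QQ_{p^\infty}^n$ that also happens to be a local maximum says nothing about $\vv{t}_0$ itself, so ``the conclusion still holds'' is not justified under that reading. The correct resolution is that your Case 2 is vacuous: if $\delta_C(\vv{t}_0)=0$ were a local maximum value, then (as you note) $\delta_C\equiv 0$ on a neighborhood of $\vv{t}_0$, but $\delta_C$ is never constant on a nonempty open subset of $\RRnn^n$, because SG~\ref{sg: inserting a linear form} forces the values of $\delta_C$ at adjacent points of $(\QQnn)_q^n$ to differ by exactly $1/q$, and for $q$ large enough such an adjacent pair lies inside any given open set. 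Replacing your second paragraph with this observation closes the gap; everything else stands.
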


\section{\texorpdfstring{The functions $\bm{\Dlb}$ and $\bm{\Plb}$; the upper and lower regions}
	{The functions Delta and Phi; the upper and lower regions}}

\label{s:  Delta and Phi section}

Throughout this and the next two sections,  $\ell$ is an $n$-tuple $(\ell_1,\ldots,\ell_n)$ 
	of pairwise prime linear forms in $\kk[x,y]$, and $\idealb$ is an ideal of $\kk[x,y]$ generated by relatively prime non-constant forms $U$ and $V$.
	
\begin{defn}\label{defn: Phi and Delta definition}
	We use $\Dlb$ to denote $\delta_{\cell{U,V}}$, the syzygy gap fractal associated with the cell $\cell{U,V}$ with respect to the linear 
		forms $\ell_1, \ldots, \ell_n$.  
	That is, $\Dlb$ is the unique continuous function $\RR^n_{\geq 0} \to \RR$ such that 
		\[ \Dlb \left(   \frac{\vv{a}}{q} \right) = \frac{1}{q} \cdot \delta( U^q, V^q, \ell^{\vv{a}} ), \] 
		for each $q$ and for each $\vv{a} \in \NN^n$.  
	We use $\Plb$ to denote the unique continuous function $\RRnn^n\to \RR$ such that
		\begin{equation}\label{eq: definition of Phi}
			\Plb\left(\frac{\vv{a}}{q}\right)=\frac{1}{q^2}\cdot\deg\ideal{U^q,V^q,\ell^\vv{a}},
		\end{equation}
		for each $q$ and for each $\vv{a}\in \NN^n$.
\end{defn}

Because $\kk[x,y]$ is regular of dimension 2, for each ideal $\ideala$ of $\kk[x,y]$
	we have  $\deg\ideala^{[p]}=p^2\cdot \deg \ideala$.
Thus, $\Plb(\vv{a}/q)$ is well defined, and \eqref{eq: definition of Phi} gives us a function $\infint^n\to \QQ$.
Below we shall justify the existence of the continuous extension of this function to $\RRnn^n$, tacitly assumed above.  
Furthermore, we shall see that $\Dlb$ and $\Plb$ are both independent of the choice of the two homogeneous 
	generators of the ideal $\idealb$, so they can be thought of as functions attached to~$\ell$ and~$\idealb$. 

\begin{lem}
	There exists a unique continuous extension to $\RRnn^n$ of the function $\Plb:\infint^n\to \QQ$ defined by 
		\eqref{eq: definition of Phi}.
	Furthermore, both $\Dlb$ and $\Plb$ depend only on the ideal $\idealb$, and not on the particular 
		choice of generators $U$ and $V$.
\end{lem}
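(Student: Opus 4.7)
The plan is to derive an algebraic identity relating $\Dlb$ and $\Plb$ on the dense subset $\infint^n\subseteq \RRnn^n$, and then use it to bootstrap the existence of the continuous extension of $\Plb$ from that of $\Dlb$. The same identity, read in reverse, will reduce the invariance of $\Dlb$ under change of generators to the more transparent invariance of $\Plb$.

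I would begin by applying Corollary~\ref{cor: restated relation between degree and delta} to the triple $F=U^q$, $G=V^q$, $H=\ell^\vv{a}$. Since $\deg \ell^{\vv{a}}=\norm{\vv{a}}$ and $\deg\ideal{U^q,V^q}=q^2\deg U\deg V$ by Lemma~\ref{lem: deg <F,G> = deg F deg G}, dividing by $q^2$ yields the key identity
\[
\Dlb(\vv{a}/q)^2 = 4\bigl(\Plb(\vv{a}/q)-\deg U\deg V\bigr)+\bigl(\norm{\vv{a}/q}-\deg U-\deg V\bigr)^2,
\]
valid for every $q$ and every $\vv{a}\in\NN^n$. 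Solving for $\Plb$ expresses $\Plb(\vv{a}/q)$ as a polynomial in $\Dlb(\vv{a}/q)$ and $\norm{\vv{a}/q}$ with constant coefficients depending only on $\deg U$ and $\deg V$. Because $\Dlb$ admits a continuous extension to $\RRnn^n$ (as discussed just after SG~\ref{sg: lipschitz}) and the remaining terms are continuous in $\vv{t}$, this formula defines a continuous function on $\RRnn^n$ agreeing with $\Plb$ on the dense subset $\infint^n$; uniqueness of the extension is automatic.

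For independence of generators, I would first treat $\Plb$. In characteristic $p$ the freshman's dream gives $(aU+bV)^q=a^qU^q+b^qV^q$, so $\ideal{U^q,V^q}=\idealb^{[q]}$ and hence $\ideal{U^q,V^q,\ell^\vv{a}}=\idealb^{[q]}+\ideal{\ell^\vv{a}}$ depends only on $\idealb$; therefore so does $\Plb(\vv{a}/q)$ and, by continuity, so does the extension. Reading the key identity backwards then yields the invariance of $\Dlb$: the right-hand side involves $\Plb$, which we have just shown is intrinsic to $\idealb$; the quantity $\deg U\deg V=\deg\idealb$ by Lemma~\ref{lem: deg <F,G> = deg F deg G}; and the sum $\deg U+\deg V$ is determined by~$\idealb$ since $\{U,V\}$ is automatically a minimal homogeneous generating system (any non-minimal pair would make $\idealb$ principal, contradicting that $U$ and $V$ are relatively prime non-constants), and by graded Nakayama the multiset of degrees of a minimal system of homogeneous generators is an invariant of~$\idealb$. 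Since $\Dlb\ge 0$, the identity determines $\Dlb$ on $\infint^n$, and hence on $\RRnn^n$ by continuity.

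The only subtle point is the last invariance claim, that $\deg U+\deg V$ depends only on $\idealb$; everything else is routine algebraic manipulation combined with the already-established extension property of~$\Dlb$.
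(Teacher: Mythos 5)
Your proposal is correct and follows essentially the same route as the paper: both derive the identity $\Dlb(\vv{t})^2=4(\Plb(\vv{t})-\deg\idealb)+(\norm{\vv{t}}-\deg UV)^2$ from Corollary~\ref{cor: restated relation between degree and delta}, use the already-established continuity of $\Dlb$ to extend $\Plb$, prove invariance of $\Plb$ directly from $\ideal{U^q,V^q}=\idealb^{[q]}$, and deduce invariance of $\Dlb$ from the identity once $\deg UV$ is known to be intrinsic to $\idealb$ (via minimality of the generating pair $\{U,V\}$). Your extra details --- the freshman's-dream computation, graded Nakayama, and the explicit use of $\Dlb\ge 0$ to recover $\Dlb$ from $\Dlb^2$ --- are all correct elaborations of steps the paper leaves implicit.
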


\begin{proof}
	Corollary~\ref{cor: restated relation between degree and delta} shows that   $\Dlb$ and $\Plb$ are related as follows on $\infint^n$:
	\begin{equation}\label{eq: relation between Delta and Phi}
		\Dlb(\vv{t})^2=4 ( \Plb(\vv{t}) - \deg \idealb ) + ( \norm{\vv{t}} - \deg UV )^2.
	\end{equation}
	Since $\Dlb$ is defined and continuous on $\RRnn^n$, \eqref{eq: relation between Delta and Phi} can be used to 
		(uniquely) extend $\Plb$ to a continuous function $\RRnn^n\to \RR$, establishing the first claim. 
	To prove the second claim, first note that $\deg UV$ depends only on $\idealb$, and not on the particular choice of generators 
		$U$ and $V$: as $\sqrt{\idealb} = \ideal{x,y}$, any two forms generating $\idealb$ must be a minimal set of 
		generators; consequently, their degrees are univocally determined by~$\idealb$.  
	Next, observe that, as the restriction of $\Plb$ to $\infint^n$ is clearly independent of the choice of $U$ and $V$, 
		so is $\Plb$ itself, by continuity.
	Finally, since both $\Plb$ and $\deg UV$ are independent of the choice of the generators $U$ and $V$, then so is $\Dlb$, 
		by~\eqref{eq: relation between Delta and Phi}.
\end{proof}

\begin{exmp}\label{exmp: Phi for n=2}
	The function $\Plm$ attached to $\ell=(x,y)$ and $\idealm=\ideal{x,y}$ is as follows:
		\begin{equation*}
			\Plm\left(\frac{\vv{a}}{q}\right)=\frac{1}{q^2}\cdot \deg\ideal{x^q,y^q,x^{a_1}y^{a_2}}=
				\begin{cases}
					\frac{a_1}{q}+\frac{a_2}{q}-\frac{a_1a_2}{q^2}&\text{if }a_1,a_2<q\\
					1&\text{otherwise}
				\end{cases}
		\end{equation*}
		for each $q$ and each $\vv{a}\in \NN^2$; so
		\begin{equation*}
			\Plm\left(\vv{t}\right)=
				\begin{cases}
					t_1 + t_2 - t_1 t_2&\text{if }t_1,t_2<1\\
					1&\text{otherwise}
				\end{cases}
		\end{equation*}
	for each $\vv{t}\in \RRnn^2$, by continuity.
\end{exmp}

\begin{defn}\label{defn: order}
	For $\vv{u},\vv{v}\in \RR^n$ we write $\vv{u}\le \vv{v}$ if $u_i\le v_i$, for each $i$.
	The relations $\ge$, $<$, and $>$ on $\RR^n$ are defined likewise.  
\end{defn}

\begin{prop}[Basic properties of $\Plb$]\label{prop: basic properties of Phi}\ 
	\begin{enumerate}[(1)]
		\item \label{item: Dlb weakly increasing}The function $\Plb$ is \textup(weakly\textup) increasing\textup: 
				$\vv{t}\le \vv{u}\ \Rightarrow\ \Plb(\vv{t})\le \Plb(\vv{u})$.
		\item $0\le \Plb(\vv{t})\le \deg \idealb$, for each $\vv{t}$.
		\item \label{item: description of Dlb on upper region}	
			$\Plb(\vv{t})=\deg \idealb\ \Leftrightarrow\ \Dlb(\vv{t})=\abs{\norm{\vv{t}}-\deg UV}$.
			In particular, $\Plb(\vv{t})= \deg \idealb$ for each $\vv{t}$ such that $\norm{\vv{t}}\geq\deg UV$.
	\end{enumerate}
\end{prop}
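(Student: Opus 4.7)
The plan is to work first at the rational points $\infint^n$, where $\Plb$ has the explicit description $\Plb(\vv{a}/q) = q^{-2} \deg\ideal{U^q, V^q, \ell^\vv{a}}$, establish each claim there, and then extend to $\RRnn^n$ by density and continuity of $\Plb$. The key algebraic tools are the identity from Corollary~\ref{cor: restated relation between degree and delta} applied to $(U^q, V^q, \ell^\vv{a})$, namely
\[ \Dlb(\vv{t})^2 = 4(\Plb(\vv{t}) - \deg\idealb) + (\norm{\vv{t}} - \deg UV)^2, \]
together with Lemma~\ref{lem: deg <F,G> = deg F deg G}, which yields $\deg\ideal{U^q,V^q} = q^2 \deg U \deg V = q^2 \deg \idealb$.

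For (1), given $\vv{a}/q$ and $\vv{b}/q'$ in $\infint^n$ with $\vv{a}/q \le \vv{b}/q'$, I would pass to a common denominator $Q = qq'$ and rewrite these as $\vv{a}'/Q$ and $\vv{b}'/Q$ with $\vv{a}' \le \vv{b}'$ componentwise. Then $\ell^{\vv{a}'}$ divides $\ell^{\vv{b}'}$, so the ideal containment $\ideal{U^Q, V^Q, \ell^{\vv{b}'}} \subseteq \ideal{U^Q, V^Q, \ell^{\vv{a}'}}$ gives the reverse inequality on colengths; dividing by $Q^2$ yields $\Plb(\vv{a}/q) \le \Plb(\vv{b}/q')$. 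Monotonicity on all of $\RRnn^n$ then follows by density and continuity.

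For (2), the lower bound is immediate since each $\Plb(\vv{a}/q)$ is a normalized colength. For the upper bound, the containment $\ideal{U^q, V^q} \subseteq \ideal{U^q, V^q, \ell^\vv{a}}$ together with Lemma~\ref{lem: deg <F,G> = deg F deg G} gives $\deg\ideal{U^q, V^q, \ell^\vv{a}} \le q^2 \deg\idealb$, so $\Plb \le \deg\idealb$ on $\infint^n$, hence everywhere by continuity.

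For (3), the equivalence is an immediate consequence of the displayed identity above: since $\Dlb \ge 0$, the equation $\Plb(\vv{t}) = \deg\idealb$ is equivalent to $\Dlb(\vv{t})^2 = (\norm{\vv{t}} - \deg UV)^2$, that is, to $\Dlb(\vv{t}) = \abs{\norm{\vv{t}} - \deg UV}$. To deduce the ``in particular'' part, I would appeal to SG~\ref{sg: triangle ineqs}: at any $\vv{a}/q \in \infint^n$ with $\norm{\vv{a}/q} \ge \deg UV$, we have $\deg \ell^\vv{a} \ge \deg U^q + \deg V^q$, so SG~\ref{sg: triangle ineqs} gives $\delta(U^q, V^q, \ell^\vv{a}) = \norm{\vv{a}} - q \deg UV$, whence $\Dlb(\vv{a}/q) = \norm{\vv{a}/q} - \deg UV$. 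Continuity extends this identity to all of $\Trivial$, and the already-proven equivalence then yields $\Plb \equiv \deg\idealb$ on $\Trivial$. The only slightly delicate step is the common-denominator bookkeeping in~(1); everything else is a direct consequence of the dictionary between $\Plb$ and $\Dlb$ provided by Corollary~\ref{cor: restated relation between degree and delta}.
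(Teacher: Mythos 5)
Your proposal is correct and follows essentially the same route as the paper: parts (1) and (2) are verified on $\infint^n$ (where they amount to the elementary ideal-containment and colength observations you spell out) and extended by density and continuity, while part (3) is read off from the identity of Corollary~\ref{cor: restated relation between degree and delta} relating $\Dlb$ and $\Plb$, with the ``in particular'' clause coming from the computation of $\Dlb$ on the trivial region via SG~\ref{sg: triangle ineqs} (this is exactly the content of Remark~\ref{rem: trivial region}, which the paper cites at this point). The only difference is that you fill in details the paper declares ``clear''; all of them check out.
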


\begin{proof}
	Points (1) and (2) are clear for $\vv{t},\vv{u}\in\infint^n$, and extend to all $\vv{t},\vv{u}\in \RRnn^n$
		by continuity.  
	The first statement in (3) follows immediately from~\eqref{eq: relation between Delta and Phi}, while the second statement
		follows from the first and Remark~\ref{rem: trivial region}, which states that $\Dlb(\vv{t}) = \norm{\vv{t}} - \deg UV$ 
		whenever $\norm{\vv{t}} \geq \deg UV$.
\end{proof}

\begin{defn}
	The set $\Trivial=\bigl\{\vv{t}\in \RRnn^n: \norm{\vv{t}}\ge \deg UV\bigr\}$ is the \emph{trivial region} attached to $\idealb$, 
		and its complement in $\RRnn^n$ is the \emph{nontrivial region}.
\end{defn}

\begin{convention}
	All topological notions used will refer to the subspace topology of $\RRnn^n$ induced  by the standard topology of $\RR^n$.
\end{convention}

\begin{notation}
	If $X\subseteq \RRnn^n$, then  $\closure{X}$ and $\boundary X$ denote the closure and the boundary of $X$ in $\RRnn^n$.
\end{notation}

\begin{defn}
	The \emph{upper} and \emph{lower regions} attached to $\ell$ and $\idealb$ are the sets 
		\[\Upper= \bigl\{ \vv{t} \in \RR^n_{\geq 0} : \Plb(\vv{t}) = \deg \idealb \bigr\} = \Plb^{-1}(\deg \idealb )\]  
		and 
		\[ \Lower= \bigl\{ \vv{t} \in \RR^n_{\geq 0} : \Plb(\vv{t}) < \deg \idealb \bigr\} = \Plb^{-1}([0,\deg\idealb)).\]
	The set $\Boundary$ is the common boundary $\boundary\Upper=\boundary \Lower$ of those regions in $\RRnn^n$. 
\end{defn}

Very often the choice of $\ell$ and $\idealb$ will be clear from the context (or fixed in advance, as in this section), so we shall 
	omit the phrase ``attached to $\ell$ and $\idealb$'' and ask that the reader rely on the context to determine the exact setup. 

\begin{rem}\label{rem: regions for pts with coords in ZZ[1/p]}
	If $\vv{a} \in \NN^n$ and $q$ is a power of $p$, then 
		\[\vv{a}/q\in \Upper\ \Leftrightarrow\ \ell^\vv{a}\in \idealb^{[q]}\quad\text{and}\quad
			\vv{a}/q\in \Lower\ \Leftrightarrow\ \ell^\vv{a}\not\in \idealb^{[q]}.\]
\end{rem}

\begin{exmp}\label{exmp: regions for case n=2}
	In the setting of Example~\ref{exmp: Phi for n=2}, the lower region $\Lower$ is the square $[0,1)^2$.
	Less trivial instances can be seen in Examples~\ref{exmp: 2D example}, \ref{exmp: some sierpinski staircases}, and 
		\ref{exmp: computation of fpts}.
\end{exmp}

\begin{defn}
	Let $\vv{u},\vv{v}\in \RR^n$.
	Then 
		\[[\vv{u},\vv{v}]\coloneqq \{\vv{t}\in \RR^n: \vv{u}\le \vv{t}\le \vv{v}\}=[u_1,v_1]\times\cdots\times [u_n,v_n].\] 
	The ``intervals'' $(\vv{u},\vv{v})$, $[\vv{u},\vv{v})$, $[\vv{u},\vvv{\infty})$, \etc, are defined analogously. 
\end{defn}

Some properties of the regions $\Upper$, $\Lower$, and $\Boundary$ follow immediately from 
	Proposition~\ref{prop: basic properties of Phi} and the continuity of $\Plb$:

\begin{cor}[Basic properties of the regions $\Upper$, $\Lower$, and $\Boundary$]\label{cor: properties of the regions}\ 
	\begin{enumerate}[(1)]
		\item $\Lower$ is open and $\Upper$ and $\Boundary$ are closed in $\RRnn^n$.
		\item $\Upper$ contains the trivial region $\Trivial = \bigl\{ \vv{t} \in \mathbb{R}^n_{\geq 0} : \norm{ \vv{t} } \geq \deg UV \bigr\}$.
		\item $\Lower$ is contained in the nontrivial region, and is therefore bounded.
		\item If $\vv{u}\in \Upper$, then $[\vv{u},\vvv{\infty})\subseteq \Upper$.
		\item\label{item: pts below a point in lower region}
			If  $\vv{u}\in \Lower$, then $[\vv{0},\vv{u}]\subseteq \Lower$.
		\item\label{item: pts above and below a boundary point} 
			If $\vv{u}\in \Boundary$, then $[\vv{0},\vv{u})\subseteq\Lower$ and $[\vv{u},\vvv{\infty})\subseteq \Upper$.
		\qed
	\end{enumerate}
\end{cor}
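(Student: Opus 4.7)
The plan is to deduce each of the six items directly from Proposition~\ref{prop: basic properties of Phi} and elementary topological reasoning; none of them should require a genuinely new idea, so the corollary really is a ``basic properties'' list.

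For (1), I would observe that $\Upper=\Plb^{-1}(\{\deg\idealb\})$ is closed as the preimage of a singleton under the continuous map $\Plb$, its complement $\Lower$ is therefore open in $\RRnn^n$, and $\Boundary=\boundary\Upper$ is closed by definition. Item (2) is literally the second sentence of Proposition~\ref{prop: basic properties of Phi}(\iref{item: description of Dlb on upper region}). Item (3) then follows at once: since $\Trivial\subseteq\Upper$ and $\Lower\cap\Upper=\emptyset$, the lower region $\Lower$ must lie in $\RRnn^n\setminus\Trivial=\bigl\{\vv{t}\in\RRnn^n:\norm{\vv{t}}<\deg UV\bigr\}$, which is bounded.

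For (4) and (5), I would invoke the monotonicity of $\Plb$ (Proposition~\ref{prop: basic properties of Phi}(\iref{item: Dlb weakly increasing})) together with the global upper bound $\Plb\le\deg\idealb$. If $\vv{u}\in\Upper$ and $\vv{t}\ge\vv{u}$, then $\deg\idealb=\Plb(\vv{u})\le\Plb(\vv{t})\le\deg\idealb$, forcing equality and hence $\vv{t}\in\Upper$; the proof of (5) is symmetric, using only monotonicity. For (6), closedness of $\Upper$ yields $\vv{u}\in\boundary\Upper\subseteq\Upper$, and (4) then gives $[\vv{u},\vvv{\infty})\subseteq\Upper$.

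The one place where slightly more than ``read off from the previous proposition'' is needed is the half-open inclusion $[\vv{0},\vv{u})\subseteq\Lower$ in item (6). Here I would use that $\vv{u}\in\boundary\Lower$: given $\vv{t}$ with $t_i<u_i$ for every $i$, every neighborhood of $\vv{u}$ meets $\Lower$, and by choosing a neighborhood small enough (possible thanks to the \emph{strict} componentwise inequality) I can exhibit $\vv{u}'\in\Lower$ with $\vv{u}'>\vv{t}$ componentwise; then (5) places $\vv{t}\in[\vv{0},\vv{u}']\subseteq\Lower$. This accumulation step is the only point I expect to require a line of reasoning rather than a direct citation, and even it is routine.
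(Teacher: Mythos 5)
Your proposal is correct and follows the route the paper intends: the corollary is stated with no written proof beyond the remark that everything ``follows immediately from Proposition~\ref{prop: basic properties of Phi} and the continuity of $\Plb$,'' and your argument fills in exactly those details (continuity for closedness/openness, monotonicity and the bound $\Plb\le\deg\idealb$ for the order-theoretic items, and the accumulation argument at a boundary point for the half-open inclusion in (6)). All steps check out, including the only nontrivial one, where the strict componentwise inequality $\vv{t}<\vv{u}$ is indeed what lets you find $\vv{u}'\in\Lower$ with $\vv{u}'>\vv{t}$ and then apply item (5).
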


\begin{rem}
	The lower region, in the case where $\idealb=\ideal{x,y}$, was studied by P\'erez \cite{perez.constancy-regions} 
		under a different guise---as the first constancy region of the mixed test ideals $\tau(\ell^\vv{t})$.
\end{rem}

\section{\texorpdfstring{The $\bm F$-threshold function}{The F-threshold function}}\label{s: F-threshold function}

Let $\ell=(\ell_1,\ldots,\ell_n)$ and $\idealb=\ideal{U,V} \subseteq \kk[x,y]$ be as in the previous section. 
As before, we use $\Dlb$ and $\Plb$ to denote the unique continuous functions $\mathbb{R}^n_{\geq 0} \to \mathbb{R}$ such that 
	$\Dlb(\vv{a}/q) = q^{-1}\cdot \delta(U^q, V^q, \ell^{\vv{a}})$ and $\Plb(\vv{a}/q) = 
	q^{-2}\cdot \deg \ideal{U^q, V^q, \ell^{\vv{a}}}$, for every $\vv{a} \in \NN^n$ and for every $q=p^e$.

\begin{discussion} 
\label{discussion:  FT description for polynomials} 
	The \emph{$F$-threshold} of a polynomial $G\in \idealm=\ideal{x,y}$ with respect to~$\idealb$ can be defined as follows: 
		\[\ftb(G)=\inf\bigl\{k/q\in (\QQpos)_{p^\infty}:G^k\in \idealb^{[q]}\bigr\}.\]
	Though it is not at all obvious from this definition, it turns out that $\ftb(G)$ is a rational number 
		(see \cite[Corollary~2.30, Theorem~3.1]{blickle+mustata+smith.discr_rat_FPTs}).
	When $\idealb=\idealm$, $\ftb(G)$ is the \emph{$F$-pure threshold} of $G$, denoted by $\fpt(G)$.

	The condition $G^k\in \idealb^{[q]}$ is equivalent to $\deg\ideal{U^q,V^q,G^k}=\deg\ideal{U^q,V^q}$ 
		(where $\deg\ideal{U^q,V^q}=\deg\idealb^{[q]}=q^2\cdot\deg \idealb$), so in the 
		special case where $G=\ell^{\vv{a}}$, for some $\vv{a}\in\NNpos^n$, we find that 
		\[\ftb(\ell^\vv{a})=\inf\bigl\{\tfrac{k}{q}\in (\QQpos)_{p^\infty}:\Plb\bigl(\tfrac{k}{q}\cdot \vv{a}\bigr)=\deg\idealb\bigr\}.\]
	The continuity of $\Plb$ and the density of $(\QQpos)_{p^\infty}$ in $\RRpos$ then allow us to write
		\[\ftb(\ell^\vv{a})=\min\left\{\lambda\in \RRpos:\Plb(\lambda\vv{a})=\deg\idealb\right\}= \min\{ \lambda\in \RRpos :\lambda\vv{a}\in \Upper\},\]
		which motivates the following definition.
\end{discussion}

\begin{defn}\label{defn: F-threshold function}
	For each $\vv{t}\in \RRpos^n$ we define
		\begin{equation*}
			\ftlb{\vv{t}}=\min\bigl\{\lambda \in \RRpos: \Plb(\lambda  \vv{t})=\deg\idealb\bigr\} = 
				\min \bigl\{ \lambda \in \RRpos : \lambda \vv{t} \in \Upper \bigr\}.
		\end{equation*}
	These descriptions agree, by definition of the upper region $\Upper$, and the latter description shows that 
		$\ftlb	{\vv{t}}$ is well defined: as $\Upper$ is closed and its complement $\Lower$ is bounded (see 
		Corollary~\ref{cor: properties of the regions}), the minima appearing in these definitions exist.
	The function $\vv{t}\mapsto \ftlb{\vv{t}}$ is the \emph{$F$-threshold function} attached to $\ell$ and $\idealb$.
\end{defn}

The following are alternate characterizations of $\ftlb{\vv{t}}$: 
	\begin{align*}
		\ftlb{\vv{t}}&=\min\bigl\{\lambda \in \RRpos: \Dlb(\lambda\vv{t})=\abs{\lambda\norm{\vv{t}}-\deg UV}\bigr\}\\
		&=\max\bigl\{\lambda \in \RRpos: \lambda\vv{t}\in\closure{\Lower}\bigr\}\\
		&=\text{the unique $\lambda \in \RRpos$ such that $\lambda\vv{t}\in \Boundary$.}
	\end{align*}
The last characterization, which depends on Corollary~\ref{cor: properties of the regions}\iref{item: pts above and below a boundary point}, 
	is perhaps the best way to think of the $F$-threshold function: $\ftlb{\vv{t}}$ is the exact factor by which  
	$\vv{t}$ needs to be scaled to obtain a point of the boundary~$\Boundary$.

\begin{rem} \label{rem: natural ft upper bound}
	Fix $\vv{t} \in \RR^n_{>0}$.  The observation that $\frac{ \deg UV }{\norm{\vv{t}}} \cdot \vv{t}$ has norm $\deg UV$ 
		(and hence lies in the trivial region $\Trivial$, and therefore in $\Upper$) shows that the factor by which $\vv{t}$ 
		needs to be scaled to obtain a  point of the boundary $\Boundary$ is at most $\deg UV/\norm{\vv{t}}$.  
	Restated, we see that 
		\[  \ftlb{\vv{t}} \leq \frac{ \deg UV }{\norm{\vv{t}}}.\] 
	In future sections we shall try to understand in what ways the value of $\ftlb{\vv{t}}$ differs from this natural upper bound.
\end{rem}

\begin{exmp}
	In the situation of Examples~\ref{exmp: Phi for n=2} and~\ref{exmp: regions for case n=2} ($\idealm=\ideal{x,y}$ and $\ell=(x,y)$), 
		the closure of the lower region is the unit square $[0,1]^2$, so 
		\begin{equation*}
			\ftlm{\vv{t}}=\max\bigl\{\lambda \in \RRpos: \lambda\vv{t}\in[0,1]^2\bigr\}=\min\left\{\frac{1}{t_1},\frac{1}{t_2}\right\}.
		\end{equation*}
\end{exmp}

Directly from the definition we have:

\begin{prop}\label{prop: scaling}
	For each $\lambda\in \RRpos$ and $\vv{t}\in \RRpos^n$ we have
		\[\ftlb{\lambda \vv{t}}=\frac{1}{\lambda}\cdot \ftlb{\vv{t}}.\qedhere\]
\end{prop}

The above result and the rationality of $F$-thresholds of polynomials imply that $\ftlb{\vv{t}}$ is rational whenever $\vv{t}\in \QQpos^n$.

\begin{prop}
	The $F$-threshold function is continuous.
\end{prop}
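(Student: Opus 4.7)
The plan is to prove continuity of $\vv{t}\mapsto\ftlb{\vv{t}}$ via sequences, exploiting the geometric characterization recorded just after Definition~\ref{defn: F-threshold function}: for each $\vv{t}\in\RRpos^n$, $\ftlb{\vv{t}}$ is the unique positive scalar $\lambda$ with $\lambda\vv{t}\in\Boundary$. So I fix $\vv{t}\in\RRpos^n$ and an arbitrary sequence $\vv{t}_k\to\vv{t}$ in $\RRpos^n$, set $\lambda_k=\ftlb{\vv{t}_k}$, and aim to show $\lambda_k\to\ftlb{\vv{t}}$.

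The first step is to show that $\{\lambda_k\}$ is bounded in $\RRpos$. The upper bound $\lambda_k\le\deg UV/\norm{\vv{t}_k}$ of Remark~\ref{rem: natural ft upper bound} yields a uniform upper bound, since $\norm{\vv{t}_k}\to\norm{\vv{t}}>0$. For a uniform positive lower bound I would first observe that $\vv{0}\in\Lower$: indeed, $\ell^{\vv{0}}=1$, so $\Plb(\vv{0})=\deg\ideal{U,V,1}=0$, while $\deg\idealb>0$ as $U$ and $V$ are non-constant. Since $\Lower$ is open by Corollary~\ref{cor: properties of the regions}(1) and $\{\vv{t}_k\}$ is componentwise bounded (as a convergent sequence), some fixed $\epsilon>0$ satisfies $\epsilon\vv{t}_k\in\Lower$ for every $k$; Corollary~\ref{cor: properties of the regions}\iref{item: pts below a point in lower region} then forces $\lambda_k>\epsilon$.

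Next, I would run a subsequence argument. For any convergent subsequence $\lambda_{k_j}\to\lambda^*$, continuity of scalar multiplication gives $\lambda_{k_j}\vv{t}_{k_j}\to\lambda^*\vv{t}$. Each $\lambda_{k_j}\vv{t}_{k_j}$ lies in the closed set $\Boundary$ (Corollary~\ref{cor: properties of the regions}(1)), so $\lambda^*\vv{t}\in\Boundary$; uniqueness of the positive scalar sending $\vv{t}$ into $\Boundary$ forces $\lambda^*=\ftlb{\vv{t}}$. Since every convergent subsequence of the bounded sequence $\{\lambda_k\}$ shares the single limit $\ftlb{\vv{t}}$, the full sequence must converge to $\ftlb{\vv{t}}$, completing the proof.

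The main obstacle is the uniform positive lower bound on $\lambda_k$: the upper bound is essentially built into the definition, but the lower bound requires knowing that $\vv{0}$ is an interior point of $\Lower$, which relies on the computation $\Plb(\vv{0})=0$ together with the hypothesis that $U$ and $V$ are non-constant. Everything else reduces to general nonsense about closed sets and sequential limits.
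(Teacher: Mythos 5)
Your proof is correct, and it takes a genuinely different route from the paper's. You argue by soft topology: bound the sequence $\lambda_k=\ftlb{\vv{t}_k}$ above via Remark~\ref{rem: natural ft upper bound} and below via the observation that $\vv{0}$ is an interior point of $\Lower$ (since $\Plb(\vv{0})=\deg\ideal{U,V,1}=0<\deg\idealb$ and $\Lower$ is open), then pass to convergent subsequences and use that $\Boundary$ is closed together with the uniqueness of the positive scalar carrying $\vv{t}$ into $\Boundary$. Each of these ingredients is available from Corollary~\ref{cor: properties of the regions} and the characterizations following Definition~\ref{defn: F-threshold function}, and the bounded-sequence-with-unique-subsequential-limit step is standard, so the argument closes. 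The paper instead normalizes the point of continuity $\vv{c}$ to lie on $\Boundary$ (using Proposition~\ref{prop: scaling}) and squeezes $\ftlb{\vv{t}}$ between $\min\{c_i/t_i\}$ and $\max\{c_i/t_i\}$, the scaling factors reaching $\boundary[\vv{0},\vv{c}]$ and $\boundary[\vv{c},\vvv{\infty})$, via Corollary~\ref{cor: properties of the regions}\iref{item: pts above and below a boundary point}. The paper's squeeze is quantitative --- it gives an explicit two-sided local estimate on $\ftlb{\vv{t}}$ in terms of the coordinates of $\vv{t}$ and $\vv{c}$ --- whereas your argument is purely qualitative but leans only on general facts about closed and open sets; both are legitimate, and yours requires slightly more care at exactly the point you flagged (the uniform positive lower bound), which you handle correctly.
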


\begin{proof}
	To show continuity at $\vv{c}\in \RRpos^n$, Proposition~\ref{prop: scaling} allows us to scale~$\vv{c}$ and assume that 
		$\vv{c}\in \Boundary$ or, equivalently, $\ftlb{\vv{c}}=1$.
	If $\vv{t}\in \RRpos^n$, then the factors by which $\vv{t}$ needs to be scaled to obtain points of $\boundary [\vv{0},\vv{c}]$
		and $\boundary [\vv{c},\vvv{\infty})$ are, respectively, $\min\{c_i/t_i\}$ and $\max\{c_i/t_i\}$.
	By Corollary~\ref{cor: properties of the regions}\iref{item: pts above and below a boundary point}, 
		the scaling factor needed to obtain a point of $\Boundary$ lies somewhere in between: 
		\begin{equation}\label{eq: inequalities in proof of continuity of FT function}
			\min\left\{\frac{c_1}{t_1},\ldots,\frac{c_n}{t_n}\right\}\le \ftlb{\vv{t}}\le  \max\left\{\frac{c_1}{t_1},\ldots,\frac{c_n}{t_n}\right\}.
		\end{equation}
	(Figure~\ref{fig: illustration for continuity proof}  illustrates this for $n=2$.)
	\begin{figure}
		\centering
		\includegraphics[height=1.7in]{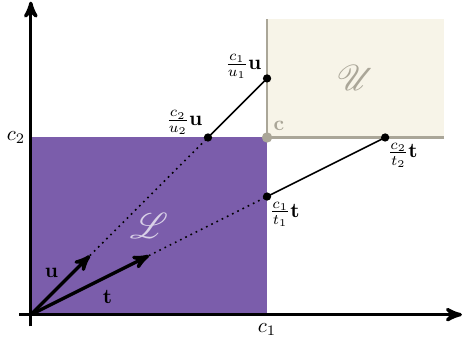}
		\caption{An illustration of inequalities (\ref{eq: inequalities in proof of continuity of FT function}) in the case $n=2$}
		\label{fig: illustration for continuity proof}
	\end{figure}
	As $\vv{t}\to \vv{c}$, both $\min\{c_i/t_i\}$ and $\max\{c_i/t_i\}$ tend to 1, so $\ftlb{\vv{t}}\to 1=\ftlb{\vv{c}}$, 
		showing continuity at~$\vv{c}$.
\end{proof}

\begin{notation}
	Let $\vv{x}\in \RRnn^n$ and $r\in \RRpos$. 
	Then $\oball{r}{\vv{x}}$ and $\cball{r}{\vv{x}}$ denote the open and closed taxicab balls (in $\RRnn^n$) of radius $r$ centered at $\vv{x}$.
\end{notation}

\begin{exmp}\label{exmp: 2D example}
	Let $\idealb=\ideal{x^3+y^3+xy^2,x^2y^3} \subseteq \FF_5[x,y]$ and  $\ell=(x,y)$.
	Figure~\ref{fig: 2D example}(a) shows a density plot of the function $\Dlb$ attached to $\ell$ and $\idealb$ on the square $[0,8]^2$, 
		where the color encodes the values of the function at each point---the lighter the color, the larger the value.
	Figure~\ref{fig: 2D example}(b) shows the corresponding upper and lower regions.
	\begin{figure}[!t]
		\centering		\subfloat[]{

			\includegraphics[height=2in]{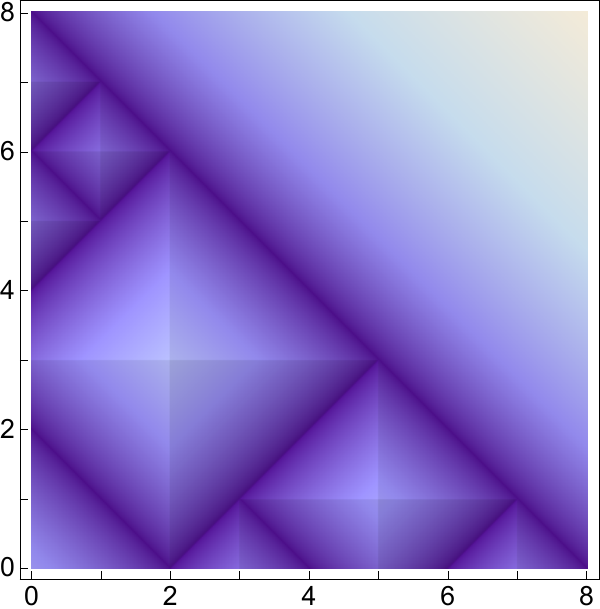}
		}
		\hskip 5mm
		\subfloat[]{
			\includegraphics[height=2in]{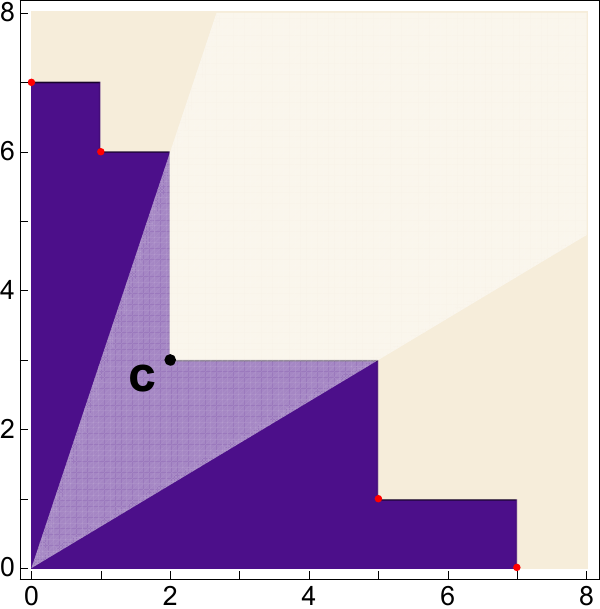}
		}
		\caption{The syzygy gap fractal of Example~\ref{exmp: 2D example} and its upper and lower regions}\label{fig: 2D example}
	\end{figure}
	This is a rather simple example of a syzygy gap fractal, that is entirely determined
		by its values at the points with integer coordinates.
	For instance, as the plot suggests, $\Dlb$ attains a local maximum at $\vv{c}=(2,3)$.
	Indeed, direct computations using SG~\ref{sg: triangle ineqs} and SG~\ref{sg: factor prime to H}
		show that $\Dlb(\vv{c})=\delta(x^3+y^3+xy^2,x^2y^3,x^2y^3)=3$, 
		while $\Dlb=2$ at each adjacent point in $\NN^2$.
	SG~\ref{sg: local max} then confirms that $\Dlb$ has a local maximum at $\vv{c}$, 
		and also shows that $\Dlb(\vv{t})=3-\abs{t_1-2}-\abs{t_2-3}$ on  $\cball{3}{\vv{c}}$.
	Similar formulas can be obtained for each other local maximum in $\NN^2$, and all such formulas put together give an
		explicit piecewise formula for $\Dlb$ on its nontrivial region.
	On the trivial region $\Trivial=\{\vv{t}\in \RRnn^2: \norm{\vv{t}}\ge 8\}$, on the other hand, we know that 
		$\Dlb(\vv{t})=\norm{\vv{t}}-8$.
	
	Returning to $\vv{c}=(2,3)$, note that on $\cball{3}{\vv{c}}$ we have $\Dlb(\vv{t})=8-\norm{\vv{t}}$ if and only if $\vv{t}\ge \vv{c}$.
	Proposition~\ref{prop: basic properties of Phi}\iref{item: description of Dlb on upper region} then shows that 
		$\Upper\cap \cball{3}{\vv{c}}=[\vv{c},\vvv{\infty})\cap \cball{3}{\vv{c}}$, and thus 
		$\Boundary\cap \cball{3}{\vv{c}}=\boundary[\vv{c},\vvv{\infty})\cap \cball{3}{\vv{c}}$.
	Consequently, $\ftlb{\vv{t}}=\max\{2/t_1,3/t_2\}$ on the cone over 
		$\boundary [\vv{c},\vvv{\infty})\cap \cball{3}{\vv{c}}$, which appears shaded in Figure~\ref{fig: 2D example}(b). 
	
	The point $\vv{c}$ plays a special role because it is a local maximum ``adjacent to the trivial region''.
	There are five such points in this example: $\vv{c}_1=\vv{c}=(2,3)$, $\vv{c}_2=(0,7)$,  $\vv{c}_3=(1,6)$, $\vv{c}_4=(5,1)$, and 
		$\vv{c}_5=(7,0)$, all shown in Figure~\ref{fig: 2D example}(b).
	Each of those points determines the $F$-threshold function locally, and together they determine the $F$-threshold function globally: 
		$\Upper=\bigcup_{i=1}^5[\vv{c}_i,\vvv{\infty})$, and therefore
		$\ftlb{\vv{t}}$ equals the minimum value of the set 
		\[ 
			\Bigl\{
				\max\bigl\{\tfrac{2}{t_1},\tfrac{3}{t_2}\bigr\}, 
				\max\bigl\{\tfrac{0}{t_1},\tfrac{7}{t_2}\bigr\}, 
				\max\bigl\{\tfrac{1}{t_1},\tfrac{6}{t_2}\bigr\}, 
				\max\bigl\{\tfrac{5}{t_1},\tfrac{1}{t_2}\bigr\}, 
				\max\bigl\{\tfrac{7}{t_1},\tfrac{0}{t_2}\bigr\}          
			\Bigr\}.
		\] 
\end{exmp}

Special points like those in the above example will be examined in detail in the next section.
As we shall see, Example~\ref{exmp: 2D example} is illustrative of the general situation in  that the lower region has a ``staircase'' aspect
	determined by such special points, and that the $F$-threshold function is determined by those points.
But Example~\ref{exmp: 2D example} is also misleadingly simple.
In more general situations there are typically infinitely many of those special points; very often those points do not have integer coordinates, 
	and $\Upper$ is not the union of boxes $[\vv{c},\vvv{\infty})$ determined by them.

\section{Critical points}
\label{s:  CP section}

Throughout this section we fix $\ell=(\ell_1,\ldots,\ell_n)$ and $\idealb=\ideal{U,V}$ as in Section~\ref{s: F-threshold function}, and again use 
	$\Dlb$ and $\Plb$ to denote the functions $\mathbb{R}^n_{\geq 0} \to \RR$ defined in terms of these choices, 
	as in Definition~\ref{defn: Phi and Delta definition}.

\begin{defn} \label{defn:  critical point definition}
	A point $\vv{c}\in \RRnn^n$ is a \emph{critical point} associated with $\ell$ and $\idealb$ if $\Dlb$ attains a local maximum at $\vv{c}$ 
		and that local maximum is adjacent to the trivial region~$\Trivial$, in the sense that $\Dlb(\vv{c})=\deg UV-\norm{\vv{c}}$
		(so the ball $\cball{\Dlb(\vv{c})}{\vv{c}}$ on which~$\Dlb$ is determined by $\vv{c}$ touches the trivial region; 
		see Figure~\ref{fig: CP diagram}).
	\begin{figure}
		\centering
		\includegraphics[height=2in]{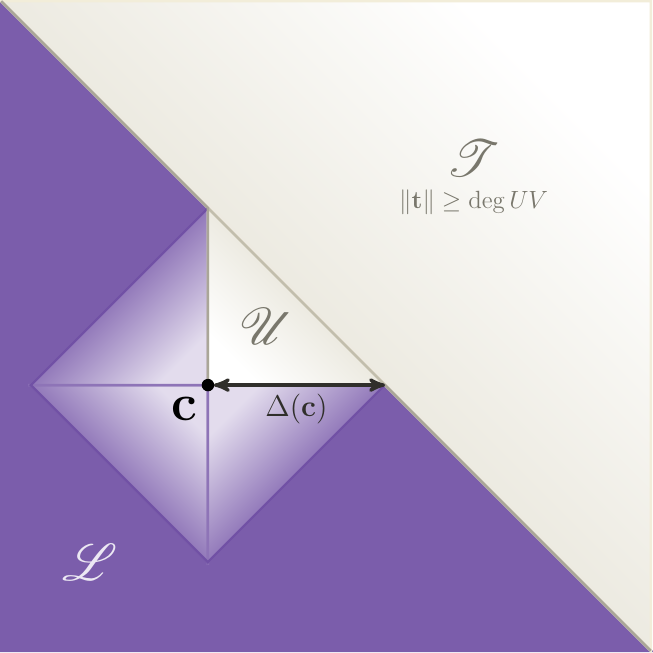}
		\caption{A critical point $\vv{c}$}\label{fig: CP diagram}
	\end{figure}	
\end{defn}

If $\ell$ and $\idealb$ have been fixed (as they were in this section) or are clear from the context, we shall call those points simply \emph{critical points}. 
By Corollary~\ref{cor: local maxima have coords in ZZ[1/p]}, \textbf{all critical points lie in $\bm{\QQ_{p^\infty}^n}$};
	this observation will be fundamental in the proofs of our main results, in Sections~\ref{s:  fpt} and~\ref{s: QH case}.
Critical points forge the (often extremely complex) boundary 
	$\Boundary$ locally---if $\vv{c}$ is a critical point, then $\Boundary$ agrees with the (very simple) boundary of $[\vv{c},\vvv{\infty})$
	near $\vv{c}$, as shown in the next proposition.
Consequently, the $F$-threshold function  agrees with the function $\vv{t} \mapsto \max \{c_1/t_1, \ldots, c_n/t_n\}$ 
	near~$\vv{c}$. 
(Proposition~\ref{prop: CPs vs normalized points} will make precise what ``near'' means.)

\begin{prop}\label{prop: basic properties of CPs}
	Let $\vv{c}$ be a critical point, and set $\radius=\Dlb(\vv{c})$.
	\begin{enumerate}[(1)]
		\item	For $\vv{t}\in \cball{\radius}{\vv{c}}$ we have $\Dlb(\vv{t})=\deg UV-\norm{\vv{t}}$ if and only  if $\vv{t}\ge \vv{c}$.
	\end{enumerate}	
	Consequently, 
	\begin{enumerate}[(1),resume]
		\item\label{item: upper points in critical region} 
			$\Upper\cap\cball{\radius}{\vv{c}}=[\vv{c},\vvv{\infty})\cap\cball{\radius}{\vv{c}}$,
		\item\label{item: boundary points in critical region}
			$\Boundary\cap\cball{\radius}{\vv{c}}=\boundary[\vv{c},\vvv{\infty})\cap \cball{\radius}{\vv{c}}$ 
				\textup(so, in particular, $\vv{c}\in \Boundary$\textup), and 
		\item\label{item: upper points}
			$[\vv{c},\vvv{\infty})\subseteq \Upper$.
	\end{enumerate}
\end{prop}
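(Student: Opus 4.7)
My plan is to leverage SG~\ref{sg: local max}, which (since $\vv{c}$ is a local maximum of $\Dlb$) gives
\[
\Dlb(\vv{t}) = \radius - d(\vv{t},\vv{c}) = \deg UV - \norm{\vv{c}} - d(\vv{t},\vv{c})
\]
for every $\vv{t}\in \cball{\radius}{\vv{c}}$, where in the second equality I use the adjacency hypothesis $\radius = \Dlb(\vv{c}) = \deg UV - \norm{\vv{c}}$. A useful preliminary consequence is that $\norm{\vv{t}} - \norm{\vv{c}} \le d(\vv{t},\vv{c}) \le \radius$ on this ball, so $\norm{\vv{t}} \le \deg UV$ and therefore $|\norm{\vv{t}} - \deg UV| = \deg UV - \norm{\vv{t}}$ throughout $\cball{\radius}{\vv{c}}$.

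For (1), I equate the displayed expression for $\Dlb(\vv{t})$ with $\deg UV - \norm{\vv{t}}$; after cancellation this reduces to $\sum_i (t_i - c_i) = \sum_i |t_i - c_i|$, which (since each $t_i - c_i$ is bounded by $|t_i - c_i|$) holds iff $t_i \ge c_i$ for every $i$, i.e., $\vv{t} \ge \vv{c}$. Statement (2) is then immediate from (1), the preliminary observation, and Proposition~\ref{prop: basic properties of Phi}\iref{item: description of Dlb on upper region}: a point $\vv{t} \in \cball{\radius}{\vv{c}}$ lies in $\Upper$ iff $\Dlb(\vv{t}) = |\norm{\vv{t}} - \deg UV| = \deg UV - \norm{\vv{t}}$ iff $\vv{t} \ge \vv{c}$. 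Part (4) follows by taking $\vv{t} = \vv{c}$ in (2) (which gives $\vv{c} \in \Upper$) and invoking Corollary~\ref{cor: properties of the regions}(4).

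The main obstacle is the reverse inclusion in (3); the forward inclusion is cheap, since any $\vv{t} \in \Boundary \cap \cball{\radius}{\vv{c}}$ lies in $\Upper \cap \cball{\radius}{\vv{c}} = [\vv{c},\vvv{\infty}) \cap \cball{\radius}{\vv{c}}$ by (2), and cannot lie in the interior of $[\vv{c},\vvv{\infty})$ since (4) would then place a whole neighborhood of $\vv{t}$ inside $\Upper$, contradicting $\vv{t} \in \boundary \Upper$. For the converse, given $\vv{t} \in \boundary[\vv{c},\vvv{\infty}) \cap \cball{\radius}{\vv{c}}$ (so $\vv{t} \ge \vv{c}$ with $t_j = c_j$ for some $j$), I must produce points of $\Lower$ arbitrarily close to $\vv{t}$. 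If $\vv{t}$ lies in the interior of the ball, then small perturbations lowering some $t_j$ below $c_j$ remain in $\cball{\radius}{\vv{c}} \setminus [\vv{c},\vvv{\infty}) \subseteq \Lower$ (again by (2)), and this is enough. The delicate case is when $d(\vv{t},\vv{c}) = \radius$ (forcing $\norm{\vv{t}} = \deg UV$): here $\vv{t} \ne \vv{c}$ (the case $\radius = 0$ being trivial) guarantees some coordinate $k$ with $t_k > c_k$, and perturbations $\vv{t} - \epsilon_1 \canvec_j - \epsilon_2 \canvec_k$ with $0 < \epsilon_1 < \epsilon_2$ small satisfy $d(\vv{t}',\vv{c}) = \radius - (\epsilon_2 - \epsilon_1) < \radius$ and $t'_j < c_j$, so they lie in $\cball{\radius}{\vv{c}} \setminus [\vv{c},\vvv{\infty}) \subseteq \Lower$ and converge to $\vv{t}$.
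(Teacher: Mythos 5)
Your proof is correct and follows essentially the same route as the paper: the key step in both is the formula $\Dlb(\vv{t})=\Dlb(\vv{c})-d(\vv{t},\vv{c})$ from SG~\ref{sg: local max}, combined with the observation that $\norm{\vv{c}}+\norm{\vv{t}-\vv{c}}=\norm{\vv{t}}$ iff $\vv{t}\ge\vv{c}$ for nonnegative vectors, and Proposition~\ref{prop: basic properties of Phi}\iref{item: description of Dlb on upper region}. The only difference is that you carefully spell out the reverse inclusion in (3) --- including the boundary-of-the-ball case --- which the paper dismisses with ``parts (2)--(4) now follow easily from (1)''; your perturbation argument there is a valid filling-in of that gap.
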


\begin{proof}
	If $\vv{t}\in \cball{\radius}{\vv{c}}$, then
		$\Dlb(\vv{t})=\Dlb(\vv{c})-d(\vv{t},\vv{c})=\deg UV-\norm{\vv{c}}-\norm{\vv{t}-\vv{c}}$, by SG~\ref{sg: upper bound} and 
		the definition of critical point, so $\Dlb(\vv{t})=\deg UV-\norm{\vv{t}}$ if and only if $\norm{\vv{c}}+\norm{\vv{t}-\vv{c}}=\norm{\vv{t}}$.
	As $\vv{t}$ and $\vv{c}$ have nonnegative coordinates, the latter condition is equivalent to $\vv{t}\ge \vv{c}$, giving (1). 	
	Each $\vv{t}\in  \cball{\radius}{\vv{c}}$ has norm less than or equal to $\deg UV$, so $\vv{t}\in \Upper$ if and only if 
		$\Dlb(\vv{t})=\deg UV-\norm{\vv{t}}$, 
		by Proposition~\ref{prop: basic properties of Phi}\iref{item: description of Dlb on upper region}; parts (2)--(4) now
		follow easily from (1).
\end{proof}

\begin{prop}\label{prop: CPs vs normalized points}
	If $\vv{t}\in \RRpos^n$ and $\vv{c}$ is a critical point, then 
		\[\ftlb{\vv{t}}=\max \left \{ \frac{c_1}{t_1}, \ldots, \frac{c_n}{t_n} \right \} 
			\Longleftrightarrow\ \vv{c} \leq \frac{ \deg UV}{\norm{\vv{t}}} \cdot \vv{t} .\]
\end{prop}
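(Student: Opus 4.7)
The plan is to translate both sides of the equivalence into a single scalar comparison involving the scaling factor $\lambda\coloneqq \max\{c_1/t_1,\ldots,c_n/t_n\}$, and then to invoke Proposition~\ref{prop: basic properties of CPs} on one side and Remark~\ref{rem: natural ft upper bound} on the other.

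First I would unpack the right-hand condition. The inequality $\vv{c}\le \frac{\deg UV}{\norm{\vv{t}}}\cdot\vv{t}$ is equivalent to $c_i/t_i \le \deg UV/\norm{\vv{t}}$ for every $i$, which is exactly $\lambda \le \deg UV/\norm{\vv{t}}$, i.e.\ $\norm{\lambda\vv{t}}\le \deg UV$. Next I would observe that, straight from the definition of $\lambda$ as a coordinatewise maximum, one always has $\lambda\vv{t}\ge \vv{c}$ with equality in at least one coordinate, so $\lambda\vv{t}\in \boundary[\vv{c},\vvv\infty)$ automatically. Using $\lambda\vv{t}\ge \vv{c}$, the taxicab distance simplifies to
\[d(\lambda\vv{t},\vv{c})=\norm{\lambda\vv{t}-\vv{c}}=\lambda\norm{\vv{t}}-\norm{\vv{c}},\]
and, since $\radius\coloneqq \Dlb(\vv{c})=\deg UV-\norm{\vv{c}}$ by definition of a critical point, the bound $\lambda\norm{\vv{t}}\le \deg UV$ is in turn equivalent to $\lambda\vv{t}\in \cball{\radius}{\vv{c}}$.

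For the $(\Leftarrow)$ direction, assuming the condition, these reductions place $\lambda\vv{t}$ in $\boundary[\vv{c},\vvv{\infty})\cap \cball{\radius}{\vv{c}}$, which by Proposition~\ref{prop: basic properties of CPs}\iref{item: boundary points in critical region} equals $\Boundary\cap \cball{\radius}{\vv{c}}$. In particular $\lambda\vv{t}\in \Boundary$, and the third characterization of the $F$-threshold function given right after Definition~\ref{defn: F-threshold function} (the unique scaling that lands on $\Boundary$) forces $\ftlb{\vv{t}}=\lambda$, as required.

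For the $(\Rightarrow)$ direction I would argue by contrapositive: if the condition fails, then $\lambda>\deg UV/\norm{\vv{t}}$, while Remark~\ref{rem: natural ft upper bound} gives $\ftlb{\vv{t}}\le \deg UV/\norm{\vv{t}}$; chaining these strict/weak inequalities yields $\ftlb{\vv{t}}<\lambda$, so the equality $\ftlb{\vv{t}}=\max\{c_i/t_i\}$ cannot hold. The only thing to watch is the strictness: the inequality $\lambda>\deg UV/\norm{\vv{t}}$ is strict, so combined with the (possibly weak) bound from Remark~\ref{rem: natural ft upper bound} it still produces a strict separation. There is no real obstacle here; the mild subtlety is keeping track of the geometric picture so that the two steps line up cleanly with the right ball radius and the right boundary piece $\boundary[\vv{c},\vvv{\infty})$.
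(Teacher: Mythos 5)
Your proposal is correct and follows essentially the same route as the paper's proof: both reduce the right-hand condition to the scalar inequality $\max\{c_i/t_i\}\le \deg UV/\norm{\vv{t}}$, prove the backward implication by placing $\max\{c_i/t_i\}\cdot\vv{t}$ in $\boundary[\vv{c},\vvv{\infty})\cap\cball{\Dlb(\vv{c})}{\vv{c}}$ and invoking Proposition~\ref{prop: basic properties of CPs}\iref{item: boundary points in critical region} together with the uniqueness of the scaling factor landing on $\Boundary$, and prove the forward implication from the upper bound in Remark~\ref{rem: natural ft upper bound}. The only (immaterial) difference is that you phrase the forward direction contrapositively where the paper argues directly.
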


\begin{proof}  
	Note that $\vv{c} \leq (\deg UV/\norm{\vv{t}})\vv{t}$ if and only if $\max \{c_i/t_i\}\le \deg UV/\norm{\vv{t}}$.
	So, in what follows we shall prove the following equivalent assertion: 
		\begin{equation} \label{eq: CPs vs normalized points}
			\ftlb{\vv{t}} =  \max \left \{ \frac{c_1}{t_1}, \ldots, \frac{c_n}{t_n} \right \} \iff  
				\max \left \{ \frac{c_1}{t_1}, \ldots, \frac{c_n}{t_n} \right \} \leq \frac{ \deg UV}{\norm{\vv{t}}}.
		\end{equation}
	Set $\gamma =\max \{c_i/t_i\}$.   
	Assuming $\ftlb{\vv{t}} = \gamma$, Remark~\ref{rem: natural ft upper bound} implies that $\gamma = \ftlb{\vv{t}} \leq \deg UV / \norm{\vv{t}}$.  
	To establish the remaining implication, suppose $\gamma \leq \deg UV / \norm{\vv{t}}$.  
	By definition of~$\gamma$, we have $\gamma\vv{t}\in \boundary [\vv{c}, \vvv{\infty})$. 
	In particular, $\gamma\vv{t} \geq \vv{c}$, and combining this with our assumption that $\gamma \leq \deg UV / \norm{\vv{t}}$ we see that 
		\[ \norm{ \gamma\vv{t} - \vv{c} } = \gamma \norm{\vv{t}} - \norm{\vv{c}} \leq \deg UV - \norm{\vv{c}} = \Dlb(\vv{c}),\] 
		where the last equality follows from the fact that $\vv{c}$ is a critical point.  
	In summary, we have just seen that $\gamma \vv{t} \in \boundary [\vv{c}, \vvv{\infty}) \cap \cball{\Dlb(\vv{c})}{\vv{c}}$, and 
	Proposition~\ref{prop: basic properties of CPs}\iref{item: boundary points in critical region} then shows that $\gamma\vv{t} \in \Boundary$ as well.
	The fact that $\ftlb{\vv{t}} = \gamma$ then follows, as $\ftlb{\vv{t}} $ is the unique factor by which $\vv{t}$ is scaled to 
		obtain an element of $\Boundary$.
\end{proof}

We now move towards the proof of some simple characterizations of critical points.
For that, we need the following elementary result.

\begin{lem}\label{lem: ideal contains a power of maximal ideal}
	Let $F,G\in R\coloneqq\kk[x,y]$ be relatively prime forms of degrees~$a$ and~$b$, respectively.
	Then $\ideal{x,y}^{a+b-1}\subseteq \ideal{F,G}$.
\end{lem}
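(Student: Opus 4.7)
The plan is to revisit the Koszul-style exact sequence used in the proof of Lemma~\ref{lem: deg <F,G> = deg F deg G} and extract more precise information about the Hilbert function of $R/\ideal{F,G}$, not just its value at $t=1$. Writing $a=\deg F$ and $b=\deg G$, the assumption that $F$ and $G$ are relatively prime (hence form a regular sequence in the two-dimensional polynomial ring $R$) gives the graded exact sequence
\[
	0 \to R(-a-b) \to R(-a) \oplus R(-b) \to R \to R/\ideal{F,G} \to 0.
\]
Computing alternating sums of Hilbert series and factoring yields
\[
	\operatorname{Hilb}_{R/\ideal{F,G}}(t)
	= \frac{1 - t^{a} - t^{b} + t^{a+b}}{(1-t)^{2}}
	= \frac{(1-t^{a})(1-t^{b})}{(1-t)^{2}}
	= \bigl(1 + t + \cdots + t^{a-1}\bigr)\bigl(1 + t + \cdots + t^{b-1}\bigr).
\]

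The key observation is then that this Hilbert series is a polynomial in $t$ of degree $(a-1)+(b-1)=a+b-2$. In particular, $\dim_{\kk}(R/\ideal{F,G})_{d}=0$ for every $d\ge a+b-1$, i.e.\ the entire degree-$d$ piece of $R$ lies in $\ideal{F,G}$ for every such $d$. Since $\ideal{x,y}^{a+b-1}$ is generated as a $\kk$-vector space by all monomials of degree at least $a+b-1$, this is precisely the inclusion $\ideal{x,y}^{a+b-1}\subseteq \ideal{F,G}$ claimed in the lemma.

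There is essentially no obstacle here: the only input beyond the Koszul sequence is the elementary identity $(1-t^{a})/(1-t)=1+t+\cdots+t^{a-1}$, and the argument bypasses the need for l'H\^opital's rule used in the proof of Lemma~\ref{lem: deg <F,G> = deg F deg G} by instead reading off the \emph{support}, rather than the value at $1$, of the Hilbert polynomial. (Equivalently, one could note that $R/\ideal{F,G}$ is an Artinian Gorenstein graded $\kk$-algebra with socle concentrated in degree $a+b-2$, but the direct Hilbert series computation above is the most self-contained route given what is already available in the paper.)
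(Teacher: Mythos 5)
Your proof is correct. The paper's own argument is shorter and more elementary: it considers the single $\kk$-linear map $R_{b-1}\times R_{a-1}\to R_{a+b-1}$, $(A,B)\mapsto AF+BG$, notes it is injective because $F$ and $G$ are relatively prime, and concludes surjectivity from the equality of dimensions $\dim_{\kk}(R_{b-1}\times R_{a-1})=a+b=\dim_{\kk}R_{a+b-1}$; this puts $R_{a+b-1}$, and hence the ideal $\ideal{x,y}^{a+b-1}$ it generates, inside $\ideal{F,G}$. Your route instead runs the full Koszul resolution through the Hilbert series and reads off that $\operatorname{Hilb}_{R/\ideal{F,G}}(t)=(1+t+\cdots+t^{a-1})(1+t+\cdots+t^{b-1})$ has degree $a+b-2$, so every graded piece of $R$ in degree at least $a+b-1$ lies in $\ideal{F,G}$. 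The two arguments are mathematically cognate --- the paper's map is exactly the degree-$(a+b-1)$ strand of your Koszul complex, with injectivity there coming from the vanishing of $R(-a-b)$ in that degree --- but yours packages the computation globally. What the paper's version buys is brevity and independence from the resolution; what yours buys is the entire Hilbert function at once (in particular the socle degree $a+b-2$, i.e.\ the sharpness of the exponent $a+b-1$), and it recycles verbatim the exact sequence already displayed in the proof of Lemma~\ref{lem: deg <F,G> = deg F deg G}. Both are complete proofs.
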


\begin{proof}
	As $F$ and $G$ are relatively prime, the $\kk$-linear map $R_{b-1}\times R_{a-1}\to R_{a+b-1}$,
		$(A,B)\mapsto AF+BG$, is injective. 
	But the $\kk$-dimensions of the domain and the codomain of the map are the same, so the map is surjective as well.
\end{proof}

\begin{rem}\label{rem: bounding box}
	Lemma~\ref{lem: ideal contains a power of maximal ideal} shows, in particular, that $(\deg UV-1)\canvec_i \in \Upper$, for each~$i$, 
		where $\canvec_1,\ldots,\canvec_n$ denote the canonical basis vectors.
	This and Corollary~\ref{cor: properties of the regions}(3)
		show that the lower region $\Lower$ and the boundary $\Boundary$ are contained in the intersection of 
		the cube $[0,\deg UV-1]^n$ and the half-space $\{\vv{t}\in \RR^n:\norm{\vv{t}}\le \deg UV\}$.
\end{rem}

\begin{prop}\label{prop: characterization of CP}
	Let $\vv{c}=\vv{a}/q\in (\QQnn)^n_q$.
	The following are equivalent\textup:
	\begin{enumerate}[(1)]
		\item\label{item: c is a CP}
			$\vv{c}$ is a critical point\textup;
		\item\label{item: first characterization of CP continuous}
			$\vv{c}\in \Upper$ and $\vv{c}-t\,\canvec_i\in \Lower$, for each $i$ such that $c_i>0$
			and $0<t\le c_i$\textup;
		\item\label{item: first characterization of CP discrete} 
			$\vv{c}\in \Upper$ and $\vv{c}-\canvec_i/q\in \Lower$, for each $i$ such that $c_i>0$\textup;
		\item\label{item: second characterization of CP} 
			$\ell^{\vv{a}}\in \idealb^{[q]}$ and  $\ell^{\vv{a}-\canvec_i}\not\in \idealb^{[q]}$, for each $i$ such that $a_i>0$.
	\end{enumerate}
\end{prop}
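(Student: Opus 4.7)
The plan is to observe that $(3) \Leftrightarrow (4)$ is immediate from Remark~\ref{rem: regions for pts with coords in ZZ[1/p]} (applied to $\vv{c}$ and, for each $i$ with $c_i>0$, to $\vv{c}-\canvec_i/q$), while $(2) \Rightarrow (3)$ is just the case $t=1/q$ (legitimate since $c_i>0$ and $c_i\in (\QQnn)_q$ force $c_i \geq 1/q$). The substantive work is therefore concentrated in the implications $(1)\Rightarrow(2)$ and $(3)\Rightarrow(1)$.

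For $(1)\Rightarrow(2)$, set $\radius=\Dlb(\vv{c})$. Proposition~\ref{prop: basic properties of CPs}\iref{item: boundary points in critical region} places $\vv{c}$ in $\Boundary\subseteq\Upper$. Fix $i$ with $c_i>0$ and pick any $\epsilon\in(0,\min\{c_i,\radius\})$. The point $\vv{c}-\epsilon\canvec_i$ lies in $\cball{\radius}{\vv{c}}$ but is not $\ge \vv{c}$, so Proposition~\ref{prop: basic properties of CPs}\iref{item: upper points in critical region} keeps it out of $\Upper$ and hence in $\Lower$. For every $t\in[\epsilon,c_i]$, the inequality $\vv{c}-t\canvec_i\le \vv{c}-\epsilon\canvec_i$ together with Corollary~\ref{cor: properties of the regions}\iref{item: pts below a point in lower region} yields $\vv{c}-t\canvec_i\in\Lower$.

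The implication $(3)\Rightarrow(1)$ is where the real work lies. I would first pin down $\norm{\vv{c}}$: Lemma~\ref{lem: ideal contains a power of maximal ideal} applied to $U^q,V^q$ yields the containment $\idealm^{q\deg UV-1}\subseteq\idealb^{[q]}$, so if $\norm{\vv{c}}\geq\deg UV$, picking any $i$ with $c_i>0$ would make $\ell^{\vv{a}-\canvec_i}$ (of degree $q\norm{\vv{c}}-1\ge q\deg UV-1$) lie in $\idealb^{[q]}$, contradicting $(4)$. Since $\norm{\vv{c}}\in(\QQnn)_q$, this forces $\norm{\vv{c}}\le\deg UV-1/q$, and Proposition~\ref{prop: basic properties of Phi}\iref{item: description of Dlb on upper region} combined with $\vv{c}\in\Upper$ gives $\Dlb(\vv{c})=\deg UV-\norm{\vv{c}}$---the ``adjacent to the trivial region'' condition of Definition~\ref{defn:  critical point definition}. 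It remains to verify the local-maximum condition of SG~\ref{sg: local max} at each neighbor in $(\QQnn)_q^n$. The neighbor $\vv{c}+\canvec_i/q$ is $\ge\vv{c}$, hence in $\Upper$ by Corollary~\ref{cor: properties of the regions}, and since $\norm{\vv{c}}+1/q\le\deg UV$, Proposition~\ref{prop: basic properties of Phi}\iref{item: description of Dlb on upper region} gives $\Dlb(\vv{c}+\canvec_i/q)=\deg UV-\norm{\vv{c}}-1/q=\Dlb(\vv{c})-1/q$; for a neighbor $\vv{c}-\canvec_i/q$ with $c_i>0$, hypothesis~$(3)$ places it in $\Lower$, yielding the strict inequality $\Dlb(\vv{c}-\canvec_i/q)<\deg UV-\norm{\vv{c}}+1/q$, while SG~\ref{sg: inserting a linear form} restricts $\Dlb(\vv{c}-\canvec_i/q)$ to $\Dlb(\vv{c})\pm 1/q$, ruling out the $+$ option and leaving $\Dlb(\vv{c})-1/q$. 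The main obstacle is precisely this last step: eliminating the $+1/q$ choice at $\vv{c}-\canvec_i/q$ crucially uses the \emph{strict} separation $\Dlb(\vv{t})<\abs{\norm{\vv{t}}-\deg UV}$ that characterizes membership in $\Lower$ via equation~\eqref{eq: relation between Delta and Phi}.
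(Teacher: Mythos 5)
Your proof is correct and follows essentially the same route as the paper's: the same cycle of implications $(1)\Rightarrow(2)\Rightarrow(3)\Leftrightarrow(4)$ and $(3)\Rightarrow(1)$, the same use of Lemma~\ref{lem: ideal contains a power of maximal ideal} to force $\norm{\vv{c}}<\deg UV$, and the same appeal to SG~\ref{sg: inserting a linear form} together with SG~\ref{sg: local max} to rule out the $+1/q$ option at each neighbor. The only cosmetic difference is that where you invoke Proposition~\ref{prop: basic properties of Phi}\iref{item: description of Dlb on upper region} and the strict-inequality form of membership in $\Lower$ derived from \eqref{eq: relation between Delta and Phi}, the paper substitutes the neighbors directly into the factored identity \eqref{eq: relation between Delta and Phi rearranged}; the content is identical.
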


\begin{proof}
	Proposition~\ref{prop: basic properties of CPs}\iref{item: upper points in critical region} and 
		Corollary~\ref{cor: properties of the regions}\iref{item: pts below a point in lower region} show that 
		\iref{item: c is a CP}~$\Rightarrow$~\iref{item: first characterization of CP continuous};
		\iref{item: first characterization of CP continuous}~$\Rightarrow$~\iref{item: first characterization of CP discrete} is immediate;  
		Remark~\ref{rem: regions for pts with coords in ZZ[1/p]} shows that 
		\iref{item: first characterization of CP discrete}~$\Leftrightarrow$~\iref{item: second characterization of CP}.
	We now show that \iref{item: first characterization of CP discrete}~$\Rightarrow$~\iref{item: c is a CP}. 
	Rewrite equation~\eqref{eq: relation between Delta and Phi} as follows:
		\begin{equation}\label{eq: relation between Delta and Phi rearranged}
			\underbracket[.5pt][5pt]{{(\deg UV-\norm{\vv{t}}-\Dlb(\vv{t}))}}_A\underbracket[.5pt][5pt]{(\deg UV-\norm{\vv{t}}+\Dlb(\vv{t}))}_B=
			\underbracket[.5pt][5pt]{4(\deg\idealb-\Plb(\vv{t}))}_C.
		\end{equation}
	Substituting $\vv{t}=\vv{c}$, the first clause of~\iref{item: first characterization of CP discrete} shows that $C=0$. 
	Moreover, as $\ideal{x,y}^{q\deg UV-1}\subseteq \idealb^{[q]}$, by Lemma~\ref{lem: ideal contains a power of maximal ideal},
		the second clause of~\iref{item: first characterization of CP discrete} implies that $\norm{\vv{c}}<\deg UV$,
		so $B>0$.
	It follows that $A=0$, so $\Dlb(\vv{c})=\deg UV-\norm{\vv{c}}>0$.
	It remains to prove that $\Dlb$ attains a local maximum at~$\vv{c}$. 	
	Note that SG~\ref{sg: inserting a linear form} tells us that each $\Dlb(\vv{c}\pm \canvec_i/q)$ is either $\Dlb(\vv{c})-1/q$ or 
		$\Dlb(\vv{c})+1/q$. 
	We claim that the latter cannot happen; the result will then follow from SG~\ref{sg: local max}.
	
	Substituting $\vv{t}=\vv{c}+\canvec_i/q$ in~\eqref{eq: relation between Delta and Phi rearranged} and supposing 
		that $\Dlb(\vv{c}+\canvec_i/q)=\Dlb(\vv{c})+1/q$ we reach an impossibility: $A=-2/q$ and $B=2\Dlb(\vv{c})>0$, while $C=0$ 
		(since $\vv{c}+\canvec_i/q\in \Upper$).	
	If $c_i>0$, then substituting $\vv{t}=\vv{c}-\canvec_i/q$ in~\eqref{eq: relation between Delta and Phi rearranged} and supposing 
		that $\Dlb(\vv{c}-\canvec_i/q)=\Dlb(\vv{c})+1/q$ we again reach an impossibility: $A=0$, while  $C>0$, by
		the second clause of~\iref{item: first characterization of CP discrete}.
	This establishes the claim.
\end{proof}

The above proposition shows that a critical point is a minimal point of $\Upper$ with respect to the componentwise order $\le$
	introduced in Definition~\ref{defn: order}.
Although the converse does not hold---there are minimal points of $\Upper$ that are not critical points---it is true that a minimal point of 
	$\Upper\cap \QQ_{q}^n$ is a critical point.
This gives us a method, however impractical, to locate critical points: start with a point $\vv{u}\in\Upper\cap \QQ_{q}^n$, and
	successively subtract $1/q$ from the coordinates of $\vv{u}$, until a minimal point of $\Upper\cap \QQ_{q}^n$ is found.
For a more sophisticated method to locate critical points, see Appendix~\ref{appendix: algorithm}.

A consequence of the minimality discussed above is that if $\vv{b}$ and $\vv{c}$ are critical points with $\vv{b}\le\vv{c}$, then $\vv{b}=\vv{c}$. 
We shall use this fact often, without further comment.

\begin{cor}\label{cor: u in U_q iff u >= CP a/q}
	Let $\vv{u}\in (\QQnn)^n_q$.
	Then $\vv{u}\in \Upper$ if and only if there exists a critical point $\vv{c}\in (\QQnn)_q^n$ such that $\vv{c}\le \vv{u}$. 
\end{cor}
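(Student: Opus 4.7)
The plan is to handle the two implications separately. The backward implication is essentially free: given a critical point $\vv{c} \in (\QQnn)_q^n$ with $\vv{c} \leq \vv{u}$, Proposition~\ref{prop: basic properties of CPs}\iref{item: upper points} already asserts $[\vv{c},\vvv{\infty}) \subseteq \Upper$, so $\vv{u} \in \Upper$.

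For the forward implication, I would use a descent argument inside the lattice $(\QQnn)_q^n$. Starting from $\vv{u}_0 \coloneqq \vv{u}$, construct a sequence $\vv{u}_0, \vv{u}_1, \vv{u}_2, \ldots$ in $\Upper \cap (\QQnn)_q^n$ as follows. At stage $j$, if there exists some index $i$ with $(u_j)_i > 0$ and $\vv{u}_j - \canvec_i/q \in \Upper$, then set $\vv{u}_{j+1} \coloneqq \vv{u}_j - \canvec_i/q$; otherwise halt and take $\vv{c} \coloneqq \vv{u}_j$. By construction every $\vv{u}_j$ lies in $\Upper \cap (\QQnn)_q^n$ and $\vv{u}_j \leq \vv{u}$, so the resulting $\vv{c}$ (if the process halts) is a candidate of the required form.

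Termination is straightforward: each step strictly decreases $\norm{\vv{u}_j}$ by $1/q$, and $\norm{\vv{u}_j} \in (\QQnn)_q$ is bounded below by $0$, so only finitely many steps are possible. When the process halts at $\vv{c} \in (\QQnn)_q^n$, the halting condition says exactly that $\vv{c} \in \Upper$ and $\vv{c} - \canvec_i/q \in \Lower$ for every $i$ with $c_i > 0$; this is condition~\iref{item: first characterization of CP discrete} of Proposition~\ref{prop: characterization of CP}, which certifies that $\vv{c}$ is a critical point.

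There is no substantive obstacle here; the argument is purely formal once one views critical points, via Proposition~\ref{prop: characterization of CP}, as the minimal elements of $\Upper \cap (\QQnn)_q^n$ under the componentwise order, and observes that every point of $\Upper \cap (\QQnn)_q^n$ dominates such a minimal element by the well-foundedness of that order restricted to a bounded region.
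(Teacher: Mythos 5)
Your proof is correct and takes essentially the same route as the paper: both directions reduce to Proposition~\ref{prop: basic properties of CPs}\iref{item: upper points} and to condition~\iref{item: first characterization of CP discrete} of Proposition~\ref{prop: characterization of CP}, the only difference being that the paper selects a minimum-norm element of $\Upper\cap[\vv{0},\vv{u}]\cap\QQ_q^n$ in one step, whereas you reach such a (locally) minimal element by an explicit stepwise descent. That is a purely cosmetic difference, so there is nothing to add.
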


\begin{proof}
	If $\vv{u}\in \Upper$, choose $\vv{c}\in\Upper\cap[\vv{0},\vv{u}]\cap \QQ^n_q$ of minimum norm;
		then $\vv{c}$ satisfies condition~\iref{item: first characterization of CP discrete} of Proposition~\ref{prop: characterization of CP},
		and is therefore the desired critical point.
	The converse follows directly from Proposition~\ref{prop: basic properties of CPs}\iref{item: upper points}.
\end{proof}

\begin{rem}\label{rem: uniqueness of CP under point in boundary of trivial region}
	While a point in $\Upper$ may be strictly greater than several critical points, that is never the case for points in 
		$\boundary\Trivial = \bigl\{ \vv{t} \in \mathbb{R}^n_{\geq 0} : \norm{ \vv{t} } = \deg UV \bigr\}$.
	Indeed, suppose $\vv{b}\ne\vv{c}$ are critical points with $\vv{b},\vv{c}\le \vv{u}\in \boundary\Trivial$. 
	Consider the sum $d(\vv{u},\vv{b})+d(\vv{u},\vv{c})$.
	Since $\vv{u}\in\boundary\Trivial$,  this sum equals $\Dlb(\vv{b})+\Dlb(\vv{c})$, which is less than or equal to $d(\vv{b},\vv{c})$, 
		by SG~\ref{sg: local max}.
	The triangle inequality then implies that  $d(\vv{u},\vv{b})+d(\vv{u},\vv{c})=d(\vv{b},\vv{c})$.
	This fact, the assumption that $\vv{b},\vv{c}\le \vv{u}$, and the identity $\abs{b-c}+b+c=2\max\{b,c\}$ show that 
		$\sum_{i=1}^n u_i=\sum_{i=1}^n \max\{b_i,c_i\}$. 
	But $u_i\ge \max\{b_i,c_i\}$, for each $i$, and thus $u_i=\max\{b_i,c_i\}$, for each $i$.
	Since $\vv{u}\ne \vv{b},\vv{c}$, 
		this shows, in particular, that neither $\vv{b}$ nor $\vv{c}$ is strictly smaller than~$\vv{u}$.
\end{rem}

The facts proven above lead up to the following ``structure theorem'' for the $F$-threshold function.

\begin{thm}\label{thm: CPs and the FT function}
	Let $\ell=(\ell_1,\ldots,\ell_n)$, where the $\ell_i$ are pairwise prime linear forms in $\kk[x,y]$, and $\idealb=\ideal{U,V}$, 
		where $U$ and $V$ are relatively prime non-constant forms in $\kk[x,y]$.
	Fix $\vv{t}\in \RRpos^n$ and set $\lambda =\deg UV/\norm{\vv{t}}$. 
	If, for some $e$, the truncation $\tr{\lambda\vv{t}}{e}$ lies in $\Upper$, 
		then there exists a unique critical point $\vv{c}\le \tr{\lambda\vv{t}}{e}$ 
		associated with~$\ell$ and~$\idealb$ with coordinates in $\QQ_{p^e}$, and $\ftlb{\vv{t}}$ is determined by $\vv{c}$\textup:
		\[\ftlb{\vv{t}}= \max\left\{\frac{c_1}{t_1},\ldots,\frac{c_n}{t_n}\right\}<\lambda.\]
	Otherwise, $\ftlb{\vv{t}}$ is determined by the trivial region attached to $\idealb$\textup:
		\[\ftlb{\vv{t}}=\lambda=\frac{\deg UV}{\norm{\vv{t}}}.\]
\end{thm}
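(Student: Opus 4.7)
Both parts share the preliminary observation that $\lambda\vv{t}$ has taxicab norm $\deg UV$, and therefore lies on $\boundary\Trivial\subseteq\Upper$; this immediately yields the universal upper bound $\ftlb{\vv{t}}\le\lambda$, and the proof then splits according to whether some truncation $\tr{\lambda\vv{t}}{e}$ belongs to $\Upper$.

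When no truncation belongs to $\Upper$, I would argue by contradiction. If $\ftlb{\vv{t}}<\lambda$, then $\ftlb{\vv{t}}\vv{t}\in\Boundary\subseteq\Upper$ and $\ftlb{\vv{t}}\vv{t}<\lambda\vv{t}$ componentwise. Since non-terminating base-$p$ truncations satisfy $\tr{\lambda t_i}{e}<\lambda t_i$ and $\tr{\lambda t_i}{e}\to\lambda t_i$ as $e\to\infty$ (Remark~\ref{rem: facts about p expansions}), for $e$ large enough $\tr{\lambda\vv{t}}{e}>\ftlb{\vv{t}}\vv{t}$ in every coordinate. Upward-closedness of $\Upper$ (Corollary~\ref{cor: properties of the regions}\iref{item: pts above and below a boundary point}) then places $\tr{\lambda\vv{t}}{e}$ in $\Upper$, contradicting the case hypothesis; hence $\ftlb{\vv{t}}=\lambda$.

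When some $\tr{\lambda\vv{t}}{e}\in\Upper$, Corollary~\ref{cor: u in U_q iff u >= CP a/q} immediately supplies a critical point $\vv{c}\in(\QQnn)^n_{p^e}$ with $\vv{c}\le\tr{\lambda\vv{t}}{e}$. The strict-below property of non-terminating truncations upgrades this to $\vv{c}<\lambda\vv{t}=\frac{\deg UV}{\norm{\vv{t}}}\vv{t}$ componentwise, so Proposition~\ref{prop: CPs vs normalized points} delivers the formula $\ftlb{\vv{t}}=\max\{c_1/t_1,\ldots,c_n/t_n\}$, with the strict inequality $<\lambda$ following at once from $c_i<\lambda t_i$ for every $i$.

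The most delicate step is establishing the parenthetical uniqueness of~$\vv{c}$. Given two distinct candidates $\vv{c}_1,\vv{c}_2$, both critical and both $\le\tr{\lambda\vv{t}}{e}$, my plan is to combine the antichain property of critical points (the remark immediately following Proposition~\ref{prop: characterization of CP}) with the strict distance estimate $d(\vv{c}_1,\vv{c}_2)>\Dlb(\vv{c}_j)$ forced by SG~\ref{sg: local max} (distinct local maxima cannot sit inside each other's determinacy balls), and play this off against the common upper bound $\vv{u}=\ftlb{\vv{t}}\vv{t}\ge\vv{c}_j$. The obstacle is that $\vv{u}$ does not lie on $\boundary\Trivial$---since $\ftlb{\vv{t}}<\lambda$ strictly---so the symmetry argument of Remark~\ref{rem: uniqueness of CP under point in boundary of trivial region} does not apply verbatim. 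A successful argument will likely need to exploit the componentwise maximum $\max(\vv{c}_1,\vv{c}_2)\in\Upper\cap\QQ_{p^e}^n$ together with Corollary~\ref{cor: u in U_q iff u >= CP a/q}, or else carry out a finer local analysis using that $\vv{u}$ lies in the determinacy ball $\cball{\Dlb(\vv{c}_j)}{\vv{c}_j}$ around each $\vv{c}_j$.
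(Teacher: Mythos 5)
Your treatment of the two main cases is correct and essentially the paper's argument. For the case where no truncation lies in $\Upper$, the paper argues directly rather than by contradiction: the truncations $\tr{\lambda\vv{t}}{e}$ converge to $\lambda\vv{t}$, so if they all lie in $\Lower$ then $\lambda\vv{t}\in\closure{\Lower}$; since also $\lambda\vv{t}\in\boundary\Trivial\subseteq\Upper$, the point $\lambda\vv{t}$ lies in $\Boundary$, and $\ftlb{\vv{t}}=\lambda$ because $\ftlb{\vv{t}}$ is the unique scaling factor landing on $\Boundary$. Your contrapositive version is equivalent and equally valid. The existence of $\vv{c}$ and the formula for $\ftlb{\vv{t}}$ are handled exactly as in the paper, via Corollary~\ref{cor: u in U_q iff u >= CP a/q} and Proposition~\ref{prop: CPs vs normalized points}.

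The genuine gap is the uniqueness of $\vv{c}$, which you leave as a ``plan'' rather than a proof. The obstacle you identify is illusory: you are trying to apply Remark~\ref{rem: uniqueness of CP under point in boundary of trivial region} at the point $\ftlb{\vv{t}}\vv{t}$, which indeed need not lie on $\boundary\Trivial$, but the correct reference point is $\lambda\vv{t}$ itself. Since $\norm{\lambda\vv{t}}=\lambda\norm{\vv{t}}=\deg UV$, the point $\lambda\vv{t}$ does lie on $\boundary\Trivial$, and any critical point $\vv{c}\le\tr{\lambda\vv{t}}{e}$ satisfies $\vv{c}<\lambda\vv{t}$ in every coordinate, because truncations of non-terminating base $p$ expansions are strictly smaller coordinatewise (Remark~\ref{rem: facts about p expansions}) and $\vv{t}\in\RRpos^n$. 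Remark~\ref{rem: uniqueness of CP under point in boundary of trivial region} says precisely that two distinct critical points cannot both lie strictly below a single point of $\boundary\Trivial$, so uniqueness follows at once; no antichain argument, distance estimate, or componentwise-maximum construction is needed.
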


\begin{proof}  
	If  $\tr{\lambda\vv{t}}{e}\in\Lower$, for each $e$, then  $\lambda\vv{t}\in \closure{\Lower}$.
	As $\lambda\vv{t}\in \boundary\Trivial\subseteq\Upper$, $\lambda\vv{t}\in \Boundary$, whence $\ftlb{\vv{t}}=\lambda$.
	If  $\tr{\lambda\vv{t}}{e}\in\Upper$, for some $e$, then there exists a critical point 
		$\vv{c}=\vv{a}/p^e\le \tr{\lambda\vv{t}}{e}<\lambda \vv{t}$, by Corollary~\ref{cor: u in U_q iff u >= CP a/q}, 
		and Proposition~\ref{prop: CPs vs normalized points} shows that $\ftlb{\vv{t}}= \max \{c_i/t_i\}$.
\end{proof}

The way the $F$-threshold function is determined by critical points (and the trivial region) can be compactly stated as follows.

\begin{cor}
	For each $\vv{t}\in \RRpos^n$, 
		\[
			\pushQED{\qed} 
			\ftlb{\vv{t}}=\min \biggl( \biggl\{ \max \biggl\{\frac{c_1}{t_1},\ldots,\frac{c_n}{t_n}\biggr\} : 
				\vv{c}\text{\emph{\ is a critical point}}\biggr\}\cup\biggl\{ \frac{ \deg UV}{\norm{\vv{t}}} \biggr\} \biggl).\qedhere
			\popQED
		\]
\end{cor}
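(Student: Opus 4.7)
The plan is to prove the two inequalities separately; both follow quickly from material already established, and the statement is essentially a compact reformulation of Theorem~\ref{thm: CPs and the FT function}.

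For the inequality $\leq$, fix $\vv{t}\in \RRpos^n$. First, Remark~\ref{rem: natural ft upper bound} already gives $\ftlb{\vv{t}} \leq \deg UV/\norm{\vv{t}}$, so the bound against the second term in the union is immediate. Second, for any critical point $\vv{c}$, set $\mu = \max\{c_1/t_1,\ldots,c_n/t_n\}$. Then by construction $\mu\,\vv{t} \geq \vv{c}$ componentwise, so $\mu\,\vv{t} \in [\vv{c},\vvv{\infty})$, which is contained in $\Upper$ by Proposition~\ref{prop: basic properties of CPs}\iref{item: upper points}. Since $\ftlb{\vv{t}}$ is the least positive scalar placing $\vv{t}$ into $\Upper$, we conclude $\ftlb{\vv{t}} \leq \mu$. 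Taking the minimum over all critical points $\vv{c}$ together with the bound $\deg UV/\norm{\vv{t}}$ gives the $\leq$ direction.

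For the inequality $\geq$, apply Theorem~\ref{thm: CPs and the FT function} with $\lambda = \deg UV/\norm{\vv{t}}$. If $\tr{\lambda\vv{t}}{e}\in\Lower$ for every~$e$, then the theorem yields $\ftlb{\vv{t}} = \lambda = \deg UV/\norm{\vv{t}}$, which is one of the values being minimized over, so the min is attained by this term. Otherwise, the theorem produces a critical point $\vv{c}$ such that $\ftlb{\vv{t}} = \max\{c_1/t_1,\ldots,c_n/t_n\}$, again an element of the set on the right-hand side. Either way, $\ftlb{\vv{t}}$ lies in the set whose minimum is being computed, which combined with the previous paragraph shows the minimum is attained precisely at $\ftlb{\vv{t}}$.

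There is no substantial obstacle here: the real content has been absorbed into Theorem~\ref{thm: CPs and the FT function} (existence of the witnessing critical point via truncation) and Proposition~\ref{prop: basic properties of CPs}\iref{item: upper points} (the box $[\vv{c},\vvv{\infty})$ lies in $\Upper$). The corollary merely repackages these into a single minimization formula, and the min is well-defined because one of its two components always realizes $\ftlb{\vv{t}}$.
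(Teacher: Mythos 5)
Your proof is correct and follows the paper's intended route: the paper states this corollary with no written proof precisely because, as you observe, the $\geq$ direction is exactly Theorem~\ref{thm: CPs and the FT function} (which shows $\ftlb{\vv{t}}$ always equals one of the candidate values) and the $\leq$ direction is immediate from Proposition~\ref{prop: basic properties of CPs}\iref{item: upper points} and Remark~\ref{rem: natural ft upper bound}. Nothing is missing.
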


\begin{rem}\label{rem: bound on distance between FT and "LCT"} 
	Suppose that $\vv{c}$ is a critical point, so that, in particular, $\Dlb(\vv{c}) = \deg UV - \norm{\vv{c}}$.  
	It follows from the inequality $\norm{\vv{c}}/\norm{\vv{t}} \leq \max\{c_i/t_i\}$ that 
	\begin{equation}\label{eq: bound on distance between FT and "LCT"}
		\frac{\Dlb(\vv{c})}{\norm{\vv{t}}} = \frac{\deg UV}{\norm{\vv{t}}} - \frac{ \norm{\vv{c}}}{\norm{\vv{t}}} \geq 
			\frac{\deg UV}{\norm{\vv{t}}} - \max \left \{ \frac{c_1}{t_1}, \ldots, \frac{c_n}{t_n} \right \}.
	\end{equation}
	If, in addition, we suppose that $\max\{c_i/t_i\} \leq \deg UV/\norm{\vv{t}}$, 
		then \eqref{eq: CPs vs normalized points} and~\eqref{eq: bound on distance between FT and "LCT"} give us the following:
		\[\frac{\deg UV}{\norm{\vv{t}}}-\ftlb{\vv{t}}\le \frac{\Dlb(\vv{c})}{\norm{\vv{t}}}.\]
	This will be particularly relevant to us in the case where $\vv{c}$ is not in $\NN^n$; then we can write $\vv{c}=\vv{a}/q$, where $q>1$
		and some $a_i$ is prime to $p$, and SG~\ref{sg: upper bound} gives	
		\begin{equation*}
			\frac{\deg UV}{\norm{\vv{t}}}-\ftlb{\vv{t}}\le \frac{n-2}{q\norm{\vv{t}}}.
		\end{equation*}
\end{rem}

\section{\texorpdfstring{The case $\bm{n=3}$: the ``Sierpi\'nski staircases''}{The case n=3: the "Sierpi\'nski staircases"}}
\label{s:  Sierpinksi}

In this section we examine the case $n=3$ in detail. 
Let $\ell_1,\ell_2,\ell_3\in \kk[x,y]$ be pairwise prime linear forms, $\ell=(\ell_1,\ell_2,\ell_3)$, $\idealm=\ideal{x,y}$, and 
	$\idealb=\ideal{U,V}$, where $U, V\in \kk[x,y]$ are relatively prime non-constant forms.  

\begin{exmp}\label{exmp: some sierpinski staircases}
	Figure~\ref{fig: sierpinski staircases} shows the lower regions attached to $\ell=(\ell_1,\ell_2,\ell_3)$ and $\idealm$ in 
		various characteristics.
	Since $\idealm$ is invariant under linear changes of variables, those regions do not depend on the choice of $\ell$; in fact, 
		through a change of variables we may assume that $\ell=(x,y,x+y)$.
	\begin{figure}[!t]
		\centering
		\includegraphics[width=0.327\textwidth]{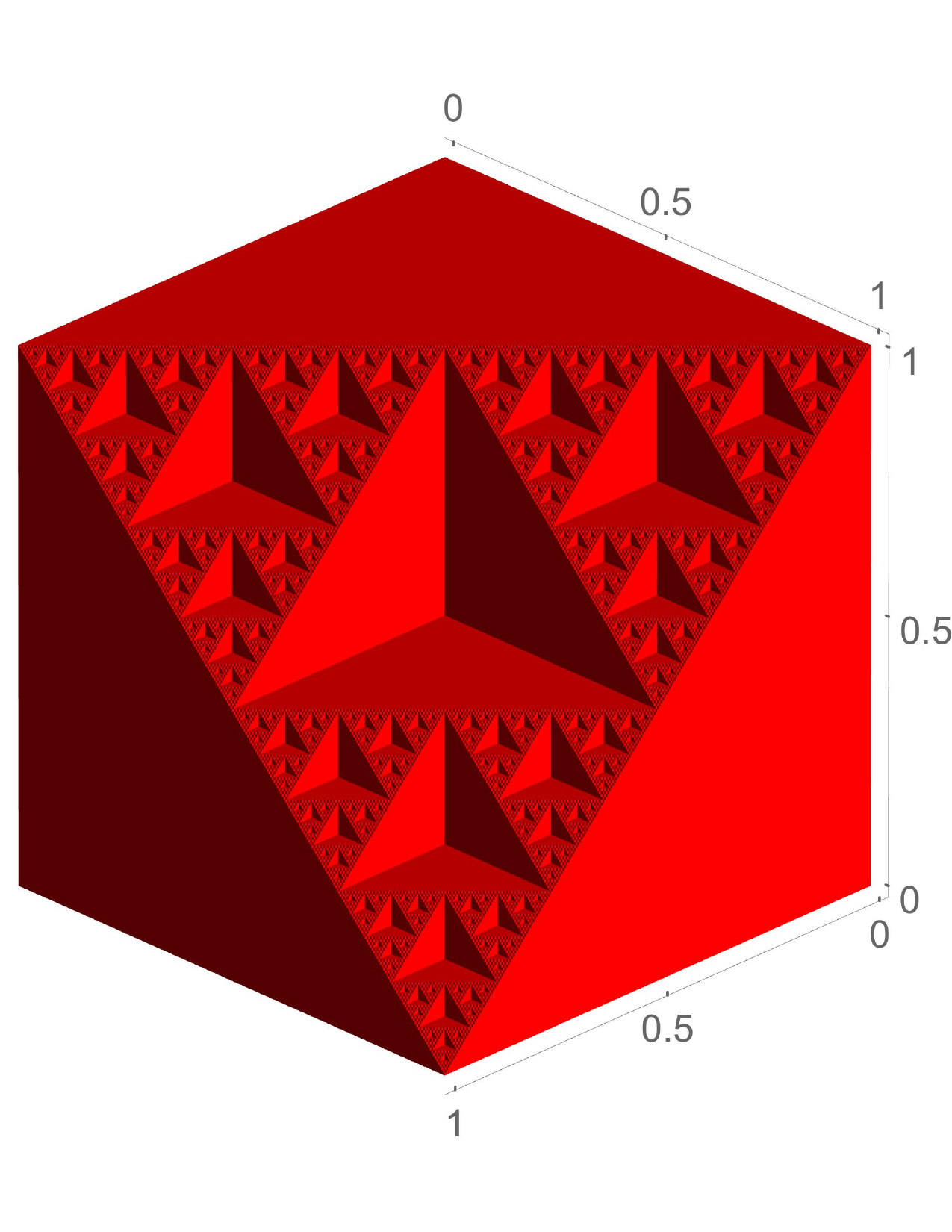}
		\includegraphics[width=0.327\textwidth]{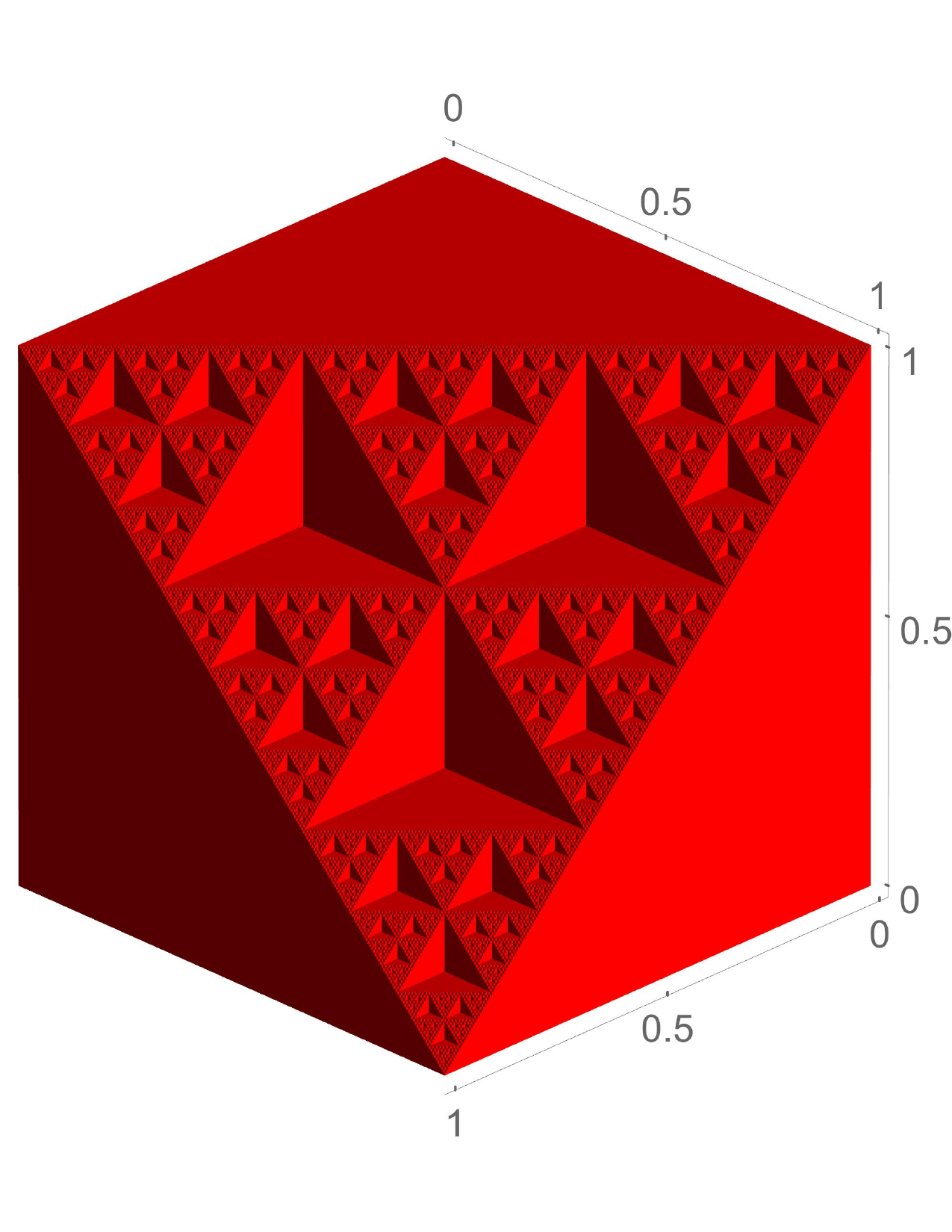}
		\includegraphics[width=0.327\textwidth]{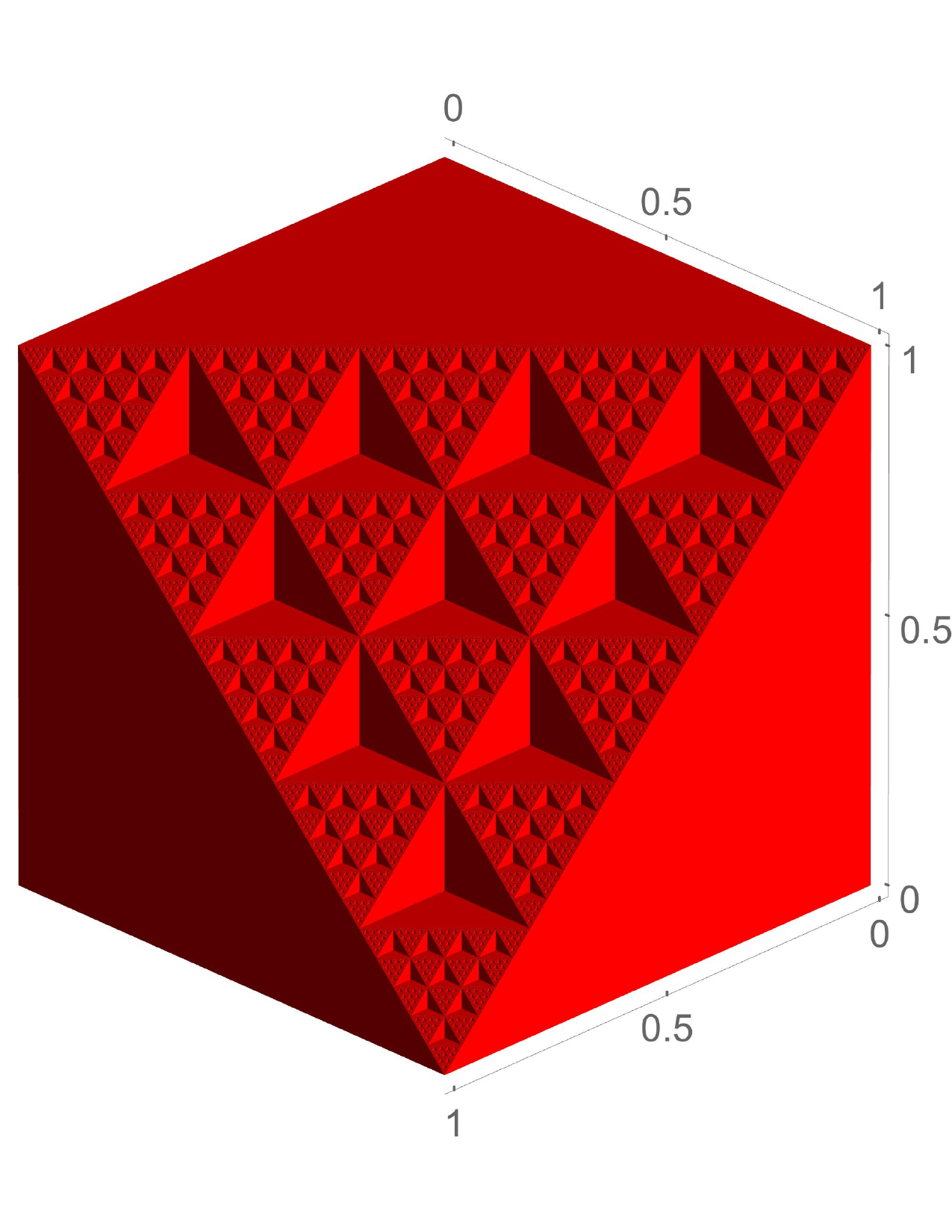}
		\caption{Sierpi\'nski staircases in characteristics 2, 3, and 5}\label{fig: sierpinski staircases}
	\end{figure}
	Figure~\ref{fig: sierpinski staircases wrt other ideals} shows the lower regions attached to $\ell=(x,y,x+y)$ and various ideals $\idealb$ in 
		characteristic~5.
	\begin{figure}
		\centering
		\includegraphics[width=1.4in]{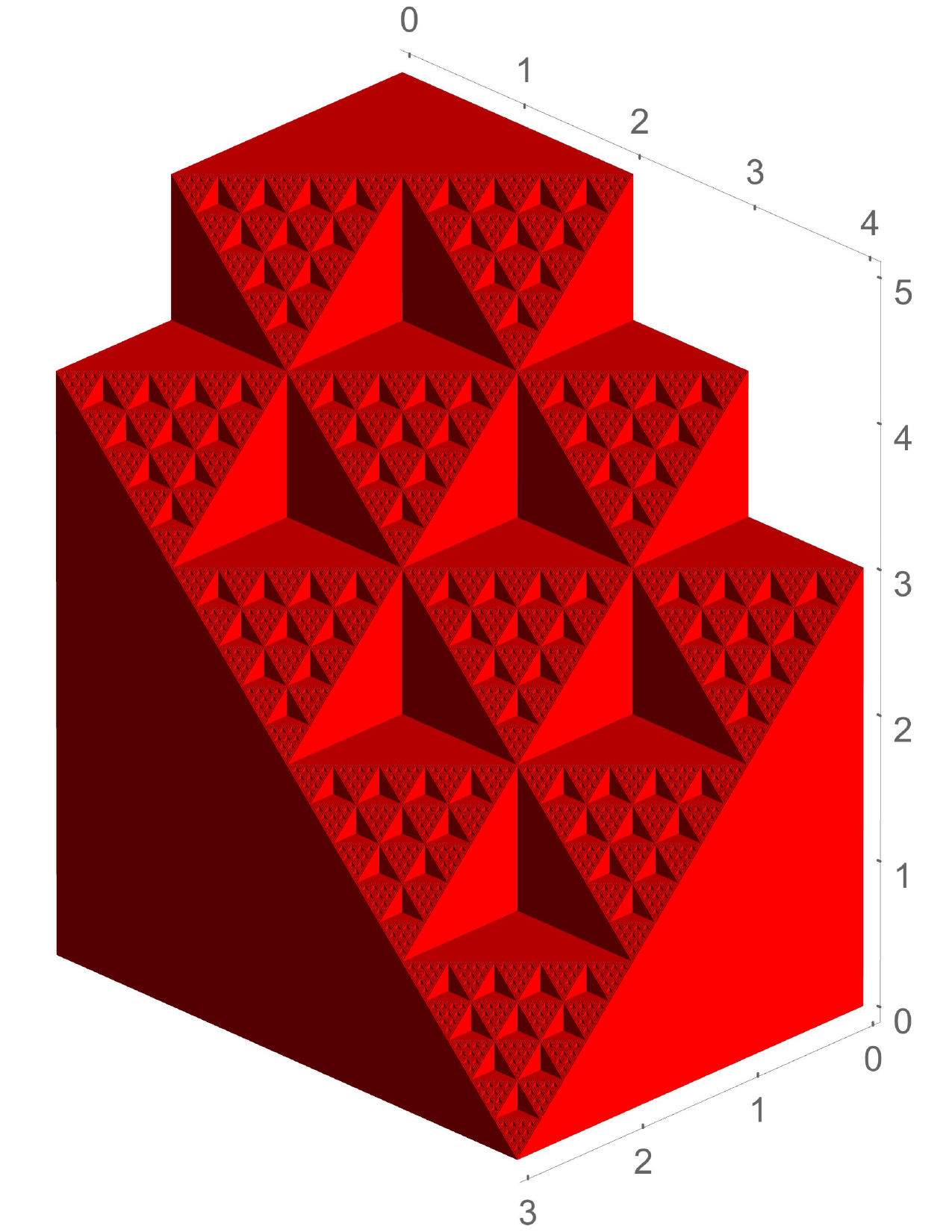}
		\includegraphics[width=1.564in]{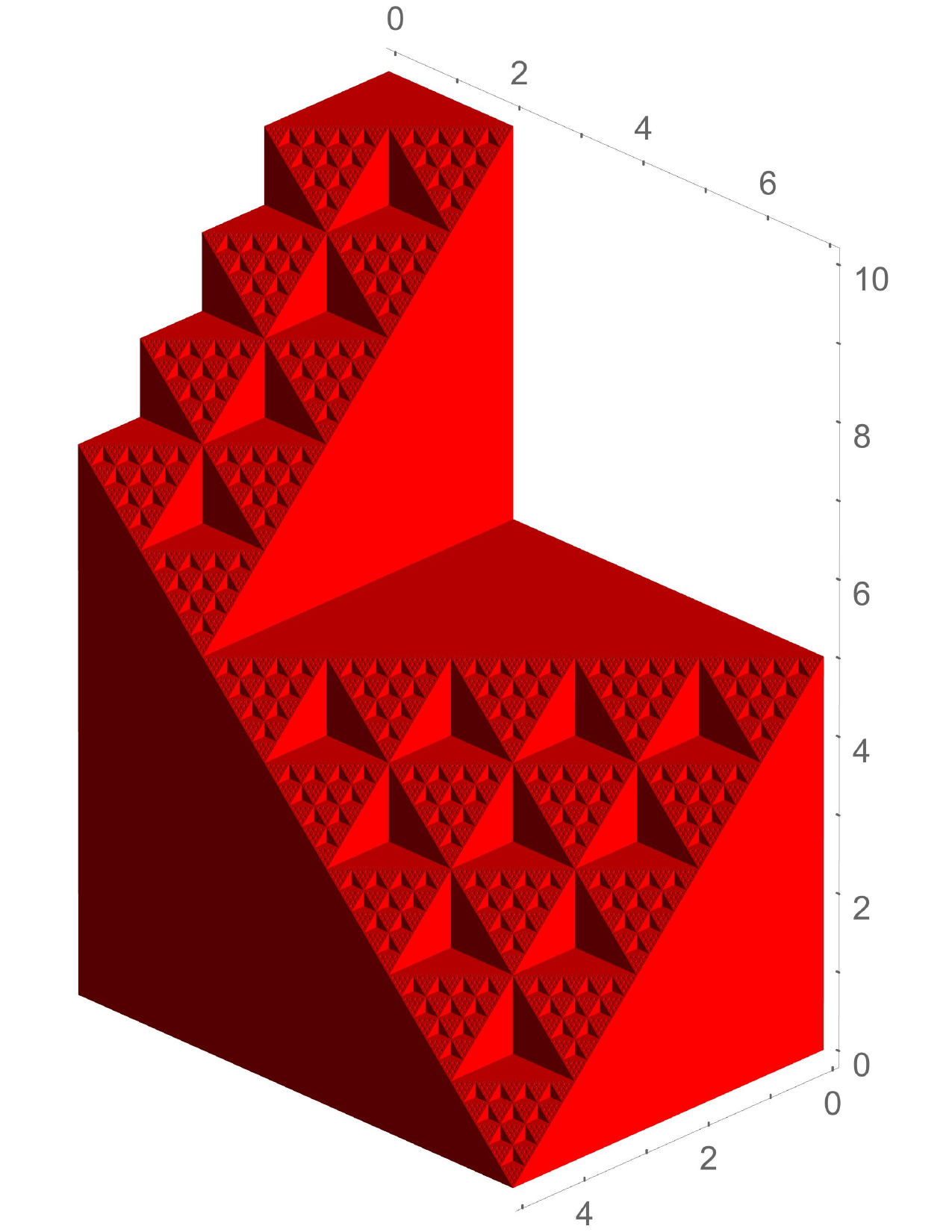}
		\includegraphics[width=1.7in]{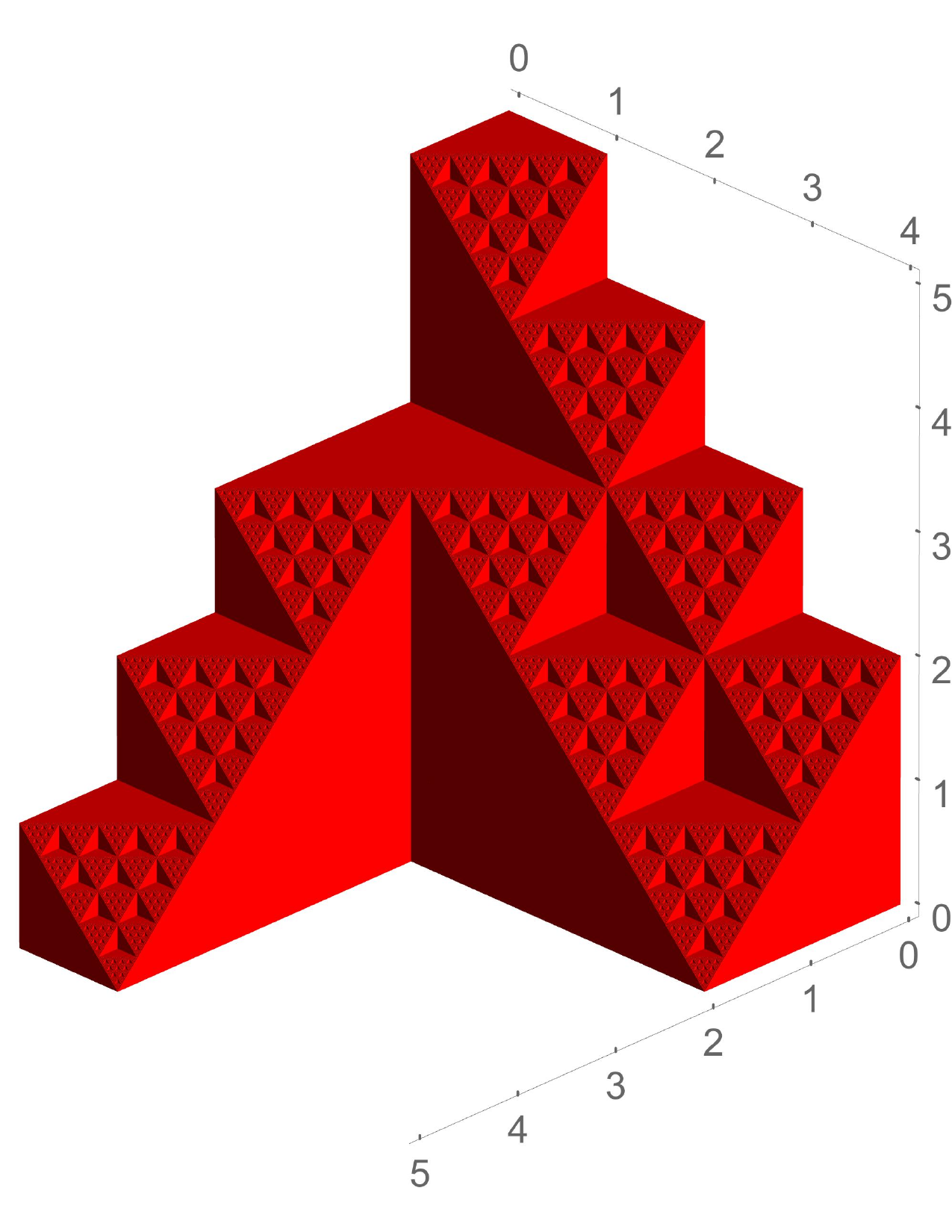}
		\caption{Sierpi\'nski staircases in characteristic 5 attached to $\ell=(x,y,x+y)$ and the ideals $\ideal{x^3,y^4}$, 
			$\ideal{x^5,y^7}$, and $\ideal{x^3+y^3,x^2 y}$}\label{fig: sierpinski staircases wrt other ideals}
	\end{figure}
	Because of the conspicuous presence of Sierpi\'nski gaskets and analogues, we call those regions ``Sierpi\'nski staircases''.
	These pictures were obtained by choosing a reasonably large $q$ and finding all maximal points $\vv{v}$ of 
		$\Lower\cap\QQ^3_{q}$, and then plotting all ``boxes'' $[\vv{0},\vv{\vv{v}}]$ together, where $\vv{v}$ ranges over all those 
		maximal points.  For more details (e.g., regarding the presence of the Sierpi\'nski gaskets and analogues), 
		see Theorem~\ref {thm: sierpinski} and Corollary~\ref{cor: sierpinski}.
		\end{exmp}
		
\begin{rem}\label{rem: remarks on the case n=3}
	SG~\ref{sg: upper bound}
		tells us that if the function $\Dlb$ attached to $\ell$ and $\idealb$ attains a local maximum at $\vv{a}/q$, 
		where $q>1$ and some $a_i$ is prime to $p$, then $\Dlb(\vv{a}/q)=1/q$.
	This is reflected in a unique feature of critical points in  the case $n=3$.
	Suppose $\vv{c}=\vv{a}/q$, with $\vv{a}$ and $q$ as above, is a critical point.  
	As $\Dlb(\vv{c})=1/q$,  
		$\norm{\vv{c}}+1/q=\deg UV$, by definition of critical point.
	That is, $\vv{c}$ is at distance $1/q$ from the trivial region.
\end{rem}

\begin{rem}[Coordinate sums of truncations]
\label{norms of truncations: R}
	Consider a point $\vv{u} \in \RR^3$ with positive coordinates.   
	As in Remark~\ref{Characterizations of truncations: R},  we have that 
		\[ \tr{\vv{u}}{s} < \vv{u} \leq \tr{\vv{u}}{s} + \frac{\vv{1}}{p^s}\] 
		for every $s \geq 0$, where $\vv{1}$ denotes the vector $(1,1,1)$.
	The second inequality is an equality in 
		some component if and only if that component lies in $\QQ_{p^s}$. 
	From this, it follows that when $\norm{\vv{u}}\in \NN$,
		$\norm{\tr{\vv{u}}{s}} = \norm{\vv{u}} -a/p^s$ 
		for some integer $1 \leq a \leq 3$, and that $a=3$ if and only if $\vv{u} \in \QQ^3_{p^s}$.
\end{rem}

With these observations, we are ready to begin our examination of the intersection of the boundary $\Boundary$ and the trivial region.

\begin{lem}  
\label{staircase: L}  Fix $\vv{u} \in \RR^3_{>0}$ with $\norm{\vv{u}} = \deg UV$ and $\tr{\vv{u}}{0} \in \Lower$.  If $\tr{\vv{u}}{e} \in \Upper$ for some $e \geq 1$, then there exists $1 \leq s \leq e$ such that $\tr{\vv{u}}{s}$ is a critical point with $\| \tr{\vv{u}}{s} \| = \deg UV - 1/p^s$.
\end{lem}

\begin{proof}  
	If $\tr{\vv{u}}{e} \in \Upper$, then Corollary~\ref{cor: u in U_q iff u >= CP a/q} shows that there exists a critical point 
		$\vv{c} \in \QQ^3_{p^e}$ with $\vv{c} \leq \tr{\vv{u}}{e} < \vv{u}$.  
	If $\vv{c}$ were integral, then we would have that $\vv{c} \leq \tr{\vv{u}}{0}$, which contradicts the assumption 
		that $\tr{\vv{u}}{0} \in \Lower$.  
	Consequently, the minimal $s$ such that $\vv{c} \in \QQ^3_{p^s}$ must satisfy $1 \leq s \leq e$, and so, 
		by Remark~\ref{rem: remarks on the case n=3}, 
		\[ \norm{\vv{c}} + \frac{1}{p^s}  = \deg UV = \norm{\vv{u}}.\]
	Because $\vv{u}-\vv{c}>\vv{0}$ and $\norm{\vv{u}-\vv{c}}=1/p^s$, we conclude that $\vv{u}-\vv{c}< \vv{1}/p^s$,	
		so $\vv{c}<\vv{u}<\vv{c}+\vv{1}/p^s$, and $\vv{c}=\tr{\vv{u}}{s}$, by Remark~\ref{Characterizations of truncations: R}.
\end{proof}

\begin{thm}
\label{n=3 main theorem: T}
Fix $\vv{u} \in \RR^3_{>0}$ with $\norm{\vv{u}} = \deg UV$ and $\tr{\vv{u}}{0} \in \Lower$, and set 
\[ \mu = \inf \left \{ s \geq 1 : \| \tr{\vv{u}}{s} \| = \deg UV - \frac{1}{p^s} \right \}.\]
If $\mu  = \infty$, then $\vv{u} \in \Boundary$. Otherwise, $\tr{\vv{u}}{{\mu}}$ is a critical point, and $\vv{u} \notin \Boundary$.
\end{thm}

\begin{proof}  
	If $\mu =\infty$, then $\| \tr{\vv{u}}{s} \| \neq \deg UV - 1/p^s$ for every $s \geq 1$; 
		Lemma~\ref{staircase: L} then implies that $\tr{\vv{u}}{e} \in \Lower$ for all $e \geq 1$, and so $\vv{u} \in \closure{\Lower}$.  
	Since $\vv{u}$ lies also in the trivial region, we conclude that $\vv{u}\in \Boundary$.
	
	If $\mu < \infty$, then $\ell^{ \, p^{\mu} \tr{\vv{u}}{\mu}}$ lies in $\idealm^{\deg UV \cdot p^{\mu} - 1}$, which by 
		Lemma~\ref{lem: ideal contains a power of maximal ideal} is contained in 
		$\langle  U^{p^{\mu}}, V^{p^{\mu}} \rangle = \idealb^{[p^{\mu}]}$. 
	In other words, $\tr{\vv{u}}{{\mu}} \in \Upper$, and the minimality of $\mu$ and 
		Lemma~\ref{staircase: L}  further show that $\tr{\vv{u}}{{\mu}}$ must be a critical point.
\end{proof}

\begin{defn}
The Sierpi\'nski $p$-gasket $\mathscr{S}_p$ consists of all points $\vv{v} \in [0,1]^3$ with $\norm{\vv{v}}=2$ for which there exists an expansion\footnote{As observed in Remark \ref{Comparing expansions: R}, there are at most $8$ possible expansions for the point $\vv{v}$.} 
	$\vv{v} = \sum_{e=1}^{\infty} \frac{\vv{w}_e}{p^e}$ 
	with $\norm{\vv{w}_e} = 2p-2$ for all $e \geq 1$.
\end{defn}

Alternatively, let  $\vv{L}=(0,1,1)$, $\vv{M}=(1,0,1)$, and $\vv{N}=(1,1,0)$.  
Any point $\vv{v}\in [0,1]^3$ with $\norm{\vv{v}}=2$ can be written uniquely as 	$\vv{v}=\lambda\vv{L}+\mu\vv{M}+\nu\vv{N}$, 
	where $\lambda, \mu, \nu\in [0,1]$ and  $\lambda+\mu+\nu=1$, and it is straightforward to verify that 
	$\vv{v} \in \mathscr{S}_p$ if and only if $\lambda, \mu$ and $\nu$ have expansions whose digits add to $p-1$ in every spot.  
Note that when $p=2$,  this characterization of $\mathscr{S}_p$ agrees with a well-known description of the Sierpi\'nski gasket 
	fractal with vertices $\vv{L}$, $\vv{M}$, and $\vv{N}$.

\begin{thm}
\label{thm: sierpinski}
	If $\idealb=\idealm$, then the intersection of the boundary $\Boundary$ and the hyperplane $X=\{\vv{t}\in \RR^3:\norm{\vv{t}}=2\}$
		 is the Sierpi\'nski $p$-gasket $\mathscr{S}_p$.
\end{thm}

\begin{proof}
	It is easy to see that the vertices $\vv{L}$, $\vv{M}$, and $\vv{N}$ of  $\mathscr{S}_p$ lie in $\Boundary$:  
		for instance, $\vv{L}\in \Upper$ and $\{(1-p^{-s})\vv{L}\}_{s\ge 1}$ is a sequence of points in $\Lower$ converging to $\vv{L}$.
	This and Remark~\ref{rem: bounding box} allow us to restrict our attention to points $\vv{u}\in X\cap  (0,1]^3$.
	For such $\vv{u}$, we show that either both $\vv{u} \in \Boundary$ and $\vv{u} \in \mathscr{S}_p$, 
		or neither of these hold.
	
	Let $\vv{u} = \sum_{e=1}^{\infty}\vv{u}_e/p^e$ be the unique expansion of $\vv{u}$ that is non-terminating in each coordinate.  Among all possible expansions of $\vv{u}$, this one is of particular interest, for by Remark \ref{truncations and expansions: R},
	\begin{equation} \label{truncation as a sum: e} \tr{\vv{u}}{e} = \frac{ \vv{u}_1}{p} + \cdots + \frac{ \vv{u}_e}{p^e} \end{equation} for every $e \geq 1$.

	Set $r= \inf \{ e \geq 1 : \norm{{\vv{u}}_e} \neq 2p-2 \}$.
	If $r = \infty$, then the componentwise non-terminating expansion shows that $\vv{u} \in \mathscr{S}_p$, while \eqref{truncation as a sum: e} also shows that  $\norm{\tr{\vv{u}}{e}} = 2-2/p^e$ for every $e \geq 1$, and so
		$\vv{u} \in \Boundary$ by Theorem~\ref{n=3 main theorem: T}.
	Suppose $r<\infty$.
	As $\norm{\vv{u}}=2$ and $\norm{{\vv{u}}_e} = 2p-2$, for each $1\le e<r$, 
		$\norm{\vv{u}_r}$ must be either $2p-1$ or $2p-3$.
		
	If $\norm{\vv{u}_r}=2p-3$, then \eqref{truncation as a sum: e} shows both that 
		 $\norm{\tr{\vv{u}}{e}}=2-2/p^e$ for each $e<r$, and that
		$\norm{\tr{\vv{u}}{r}}=2-3/p^r$.
	By Remark~\ref{norms of truncations: R}, $\vv{u}\in \QQ_{p^r}^3$, and so in particular, $\vv{u}_e=(p-1)\vv{1}$ for each $e>r$.  This shows that $\norm{\tr{\vv{u}}{e}}=2-3/p^e$, for each $e> r$ and Theorem~\ref{n=3 main theorem: T} then implies that $\vv{u}\in \Boundary$.
	On the other hand, taking one of the components of $\vv{u}$ whose $r$th digit is not $p-1$ and replacing its non-terminating expansion
		with its terminating expansion, the norm of the $r$th digit increases by 1,   the norm of all subsequent digits decrease by $p-1$,
		and the prior digits remain unchanged (see Remark~\ref{Comparing expansions: R})---so 
		 we obtain an expansion of $\vv{u}$ whose digits add to $2p-2$ in every spot, whence $\vv{u}\in \mathscr{S}_p$.
	
	If  $\norm{\vv{u}_r}=2p-1$, then \eqref{truncation as a sum: e} implies that $\norm{\tr{\vv{u}}{r}}=2-1/p^r$, so $\vv{u}\notin \Boundary$, by Theorem~\ref{n=3 main theorem: T}.
	We must show that no expansion of $\vv{u}$ satisfies the definition of the Sierpi\'nski $p$-gasket.
	This is the case for the non-terminating expansion $\sum_{e=1}^{\infty}\vv{u}_e/p^e$.
	Any other expansion is obtained from this one by replacing the non-terminating expansion of one or more components 
		with the terminating expansion (if at all possible).
	But  Remark~\ref{Comparing expansions: R} shows that any such replacement, 
		when performed to an expansion where the first digit with norm $\ne 2p-2$ has a \emph{larger} norm,
		will result in an expansion with the same property.	
	Consequently, performing any such changes to $\sum_{e=1}^{\infty}\vv{u}_e/p^e$ will not lead to an expansion satisfying 
		the definition of the Sierpi\'nski $p$-gasket.
\end{proof}

The next  result, which explains the presence of Sierpi\'nski $p$-gaskets  in  Figure~\ref{fig: sierpinski staircases wrt other ideals},
	 relates regions attached to $\idealb$ to regions attached to $\idealm$.
We temporarily  decorate our notations for the regions attached to $\idealm$ with the subscript $\idealm$. 
	
\begin{prop}\label{prop: symmetry}
	If $\vv{v}\in \Lower\cap \NN^3$ and $\norm{\vv{v}}=\deg UV-2$, 
		then $\Boundary \cap [\vv{v},\vv{v}+\vv{1}]=\vv{v}+\Boundary_\idealm$, the translation  of $\Boundary_\idealm$ by $\vv{v}$.
\end{prop}

\begin{proof}
	By Lemma~\ref{lem: ideal contains a power of maximal ideal},
		$\ell^{\vv{v}}\idealm \subseteq \idealm^{\deg UV-1}\subseteq \idealb$, and yet $\ell^{\vv{v}}\notin \idealb$, since
		$\vv{v}\in \Lower$.
	This shows that $(\idealb:\ell^{\vv{v}})=\idealm$.
	For each $\vv{a}\in \NN^3$ and each $q$ we have $(\idealb^{[q]}:\ell^{q\vv{v}+\vv{a}})=((\idealb^{[q]}:\ell^{q\vv{v}}):\ell^{\vv{a}})=
		((\idealb:\ell^{\vv{v}})^{[q]}:\ell^{\vv{a}})=(\idealm^{[q]}:\ell^{\vv{a}})$, where the second equality follows from the flatness
		of the Frobenius over $\kk[x,y]$. 
	Thus, $\ell^{q\vv{v}+\vv{a}}\in \idealb^{[q]}$ if and only if $\ell^{\vv{a}}\in \idealm^{[q]}$ or, equivalently, 
		$\vv{v}+\vv{a}/q\in \Upper$ if and only if $\vv{a}/q\in \Upper_\idealm$.
	It follows that $\Upper\cap [\vv{v},\vvv{\infty})\cap \QQ^3_{p^\infty}=(\vv{v}+\Upper_\idealm)\cap\QQ^3_{p^\infty}$.
	Taking closures we see that $\Upper \cap [\vv{v},\vvv{\infty})=\vv{v}+\Upper_\idealm$, and consequently
		$\Lower \cap [\vv{v},\vvv{\infty})=\vv{v}+\Lower_\idealm$.
	From this, it is easy to see that $\vv{v}+\Boundary_\idealm\subseteq \Boundary \cap [\vv{v},\vv{v}+\vv{1}]$ 
		and $\Boundary \cap (\vv{v},\vv{v}+\vv{1}]\subseteq \vv{v}+\Boundary_\idealm$.
	To complete the proof, let $\vv{u}\in \Boundary \cap [\vv{v},\vv{v}+\vv{1}]$ be a point with $u_i=v_i$, for some~$i$.
	As $\vv{u}\in \Upper$, the point $\vv{u}-\vv{v}$ lies in $\Upper_\idealm$.
	Since $\vv{u}-\vv{v}\in [0,1]^3$ and one of its coordinates is~0, it must be the case that some other coordinate equals~1 
		(see Example~\ref{exmp: regions for case n=2}), and $\vv{u}-\vv{v}\in \Boundary_\idealm$. 
\end{proof}

With a little  more work, we can characterize the intersection of the boundary with the trivial region, for arbitrary $\idealb=\ideal{U,V}$.  

\begin{cor} 
\label{cor: sierpinski}
	Let $\mathscr{P}= \Lower\cap \{ \vv{v}  \in \NN^3 : \norm{\vv{v}} = \deg UV - 2 \}$.
	Then the intersection the boundary $\Boundary$ and the hyperplane $X=\{\vv{t}\in \RR^3:\norm{\vv{t}}=\deg UV\}$ 
		 is the Minkowski sum $\mathscr{P}+\mathscr{S}_p=\{\vv{p}+\vv{s}:\vv{p}\in\mathscr{P}\text{ and }\vv{s}\in \mathscr{S}_p \}$.
\end{cor}

\begin{proof}
	Theorem~\ref{thm: sierpinski} and Proposition~\ref{prop: symmetry} show that 
		$\mathscr{P}+\mathscr{S}_p\subseteq \Boundary\cap X$.
	To show the reverse inclusion, 
		we show that  for each $\vv{u}\in \Boundary\cap X$ there exists $\vv{v}\in\mathscr{P}$ such that $\vv{v}\le \vv{u}\le \vv{v}+\vv{1}$.
	Proposition~\ref{prop: symmetry} then shows that $\vv{u}-\vv{v}\in \Boundary_\idealm$, 
		so $\vv{u}-\vv{v}\in \mathscr{S}_p$, by Theorem~\ref{thm: sierpinski}, 
		and $\vv{u}=\vv{v}+(\vv{u}-\vv{v})\in \mathscr{P}+\mathscr{S}_p$.

	Let $\vv{u}\in \Boundary\cap X$.
	Since $\norm{\vv{u}}=\deg UV$ and $\vv{u}\in \Boundary\subseteq [0,\deg UV-1]^3$, 
		$\vv{u}$ can have at most one zero coordinate.
	Suppose $\vv{u}$ has one zero coordinate.
	If one of the two nonzero coordinates of $\vv{u}$  is not an integer, then neither is the other one, as $\norm{\vv{u}}\in \NN$; 
		thus, $\down{\vv{u}}$ has norm $\deg UV-1$, and is therefore 
		in $\Upper$, by Lemma~\ref{lem: ideal contains a power of maximal ideal}.
	This implies that $\vv{u}$ lies in the interior of  $\Upper$, a contradiction.
	So $\vv{u}\in\NN^3$, and subtracting 1 from each of its nonzero coordinates we obtain the desired point in $\mathscr{P}$. 
	So we assume from now  on that $\vv{u}$ has positive coordinates.
	
	Suppose  $\vv{u}\notin \NN^3$.
	As $\tr{\vv{u}}{0}<\vv{u}\in \Boundary$, $\tr{\vv{u}}{0}\in \Lower$.
	Furthermore, Remark \ref{norms of truncations: R} and the assumption $\vv{u} \notin \NN^3$ show that 
		$\norm{\tr{\vv{u}}{0}}$ is either $\deg UV -1$ or $\deg UV -2$.  
	The first possibility, however, is ruled out by Lemma~\ref{lem: ideal contains a power of maximal ideal} and 
		the fact that $\tr{\vv{u}}{0} \in \Lower$.  
	Thus,  $\tr{\vv{u}}{0}\in \mathscr{P}$ is the desired point.

	Suppose now that $\vv{u}\in \NN^3$.	
	We claim that one of the points $\vv{u}-\canvec_1-\canvec_2$ and $\vv{u}-\canvec_1-\canvec_3$ lie in  $\Lower$, 
		and consequently in $\mathscr{P}$.
	To see that, we analyze some values of the function $\Dlb$ attached to $\ell$ and $\idealb$, noting that, 
		by Proposition~\ref{prop: basic properties of Phi}\iref{item: description of Dlb on upper region},
		\[\vv{t}\in \Upper \iff \Dlb(\vv{t})=\abs{\norm{\vv{t}}-\deg UV}.\]
	As $\norm{\vv{u}}=\deg UV$, $\Dlb(\vv{u})=0$.
	By SG~\ref{sg: inserting a linear form}, $\Dlb(\vv{u}-\canvec_1)=1$, the possible values for $\Dlb(\vv{u}-\canvec_1-\canvec_2)$ and 
		$\Dlb(\vv{u}-\canvec_1-\canvec_3)$ are $0$ or $2$, and the possible values for  $\Dlb(\vv{u}-\vv{1})$ are $1$ or $3$.
	But since $\vv{u}\in \Boundary$, $\vv{u}-\vv{1}\in \Lower$, and consequently $\Dlb(\vv{u}-\vv{1})=1$.
	Thus, SG~\ref{sg: 2Dconv} shows that one of $\Dlb(\vv{u}-\canvec_1-\canvec_2)$ and $\Dlb(\vv{u}-\canvec_1-\canvec_3)$ must be $0$, 
		and thus one of $\vv{u}-\canvec_1-\canvec_2$ and $\vv{u}-\canvec_1-\canvec_3$  lie in $\Lower$, 
		establishing our claim. 	
\end{proof}

Next, we turn our attention to the $F$-threshold function.  The following result is an immediate corollary of Theorem \ref{thm: CPs and the FT function} and Theorem \ref{n=3 main theorem: T}.

\begin{cor}  
\label{ft when n=3: C}
Let $\vv{t} \in \RR^3_{>0}$, and set $\vv{u} = (\deg UV/ \norm{\vv{t}}) \, \vv{t}$.  Suppose $\ell^{\up{\vv{u}}-\vv{1}} \notin \idealb$, and set $\mu = \inf \{ s \geq 1 : \| \tr{\vv{u}}{s} \| = \deg UV - 1/p^s \}$.
\begin{enumerate}[(1)]
\item If $\mu = \infty$, then $\ftlb{\vv{t}} = \deg UV / \norm{\vv{t}}$.
\item Otherwise, $\ftlb{\vv{t}} = \max \left \{ \frac{\tr{u_1}{{\mu}}}{t_1}, \frac{\tr{u_2}{{\mu}}}{t_2}, \frac{\tr{u_3}{{\mu}}}{t_3} \right \}$. \qed
\end{enumerate}
\end{cor}

For the remainder of this section, for simplicity we specialize to the case that $\idealb = \idealm$.  
We will be concerned with studying the value of the $F$-threshold function $\ftlm{\vv{t}}$ for some fixed $\vv{t} \in \RR^3_{>0}$ 
	as the characteristic varies, and we will be especially concerned with understanding when $\ftlm{\vv{t}}$ is determined 
	by the trivial region, that is, when $\ftlm{\vv{t}} = 2 / \norm{\vv{t}}$.  
Let $\vv{u}$ be the  ``normalized'' point $\vv{u}=2\vv{t}/\norm{\vv{t}}$.  
In a fixed characteristic $p$, Theorem \ref{thm: CPs and the FT function} and Theorem~\ref{thm: sierpinski} show that  $\ftlm{\vv{t}}$ is determined by the trivial region if and only if $\vv{u} \in \mathscr{S}_p$.   

If $\vv{u}\notin[0,1]^3$, then $\vv{u}$ cannot lie in $\mathscr{S}_p$ for any $p$, and so 
	$\ftlm{\vv{t}}$ can never equal $2/\norm{\vv{t}}$.  On the other hand, if $\vv{u}\in [0,1]^3$ and some coordinate of $\vv{u}$ equals zero or one, then $\vv{u}$ must lie on some edge of $\mathscr{S}_p$, and so $\ftlm{\vv{t}}$ is always determined by the trivial region.  Thus, the interesting case to consider is when $\vv{u} \in (0,1)^3$.  	
	However, the fractals $\mathscr{S}_p$ have area 0,  and so such a point $\vv{u}$ rarely lies in $\mathscr{S}_p$.  Thus, in some fixed characteristic, $\ftlm{\vv{t}}$ is ``almost never'' determined by the trivial region. Moreover, because the union of these countably many fractals still has area 0, for ``most'' $\vv{t}\in \RRpos^3$, $\ftlm{\vv{t}}$ is \emph{not} determined by the trivial region in \emph{any} characteristic.  The following result then comes as a surprise:

\begin{cor}
\label{n=3 integral case: C}
Fix a point $\vv{a} = (A,B,C) \in \NN^3$ with positive coordinates, set $D = \norm{\vv{a}}$, and suppose that the normalized point $\vv{u} = 2 \vv{a} / D$ lies in $(0,1)^3$.  Fix a prime $p$  that does not divide $D$, and let $\mathcal{O}$ denote the multiplicative order of $p$ modulo $D$.  If $s \geq 1$, then \[  \vv{z}_s = \left( \frac{\lpr{2A p^s}{D}}{D}, \frac{\lpr{2B p^s}{D}}{D} , \frac{\lpr{2C p^s}{D}}{D}  \right) \] lies in $(0,1)^3$, and has coordinate sum equal to either $1$ or $2$.  
\begin{enumerate}[(1)]
\item  If $\norm{\vv{z}_s} = 2$ for all $1 \leq s \leq \mathcal{O}$, then $\ftlm{\vv{a}} = 2/D$.  
\item  Otherwise, if $1 \leq \mu \leq \mathcal{O}$ is the minimal integer for which $ \norm{\vv{z}_{\mu}} = 1$, then 
\[ \ftlm{\vv{a}} = \frac{2}{D} - \frac{1}{p^{\mu}} \cdot \min \left \{  \frac{\lpr{2Ap^{\mu} }{D}}{AD}, \frac{\lpr{2Bp^{\mu} }{D}}{BD}, \frac{\lpr{2Cp^{\mu} }{D}}{CD} \right \}.  \]
\end{enumerate}
In particular,  $\ftlm{\vv{a}} = 2/ D$ for all $p \equiv 1 \bmod D$.
\end{cor}

\begin{proof}
By Remark \ref{truncations of rationals: rem}, 
\begin{equation}
\label{truncation when rational: e}
\tr{\vv{u}}{s} = \vv{u} - \frac{\vv{z}_s}{p^s},
\end{equation}
and Remark \ref{norms of truncations: R} and the assumption that $p \nmid D$ then imply that $\vv{z}_s \in (0,1)^3$, and that either $\norm{\vv{z}_s} = 1$ or $\norm{\vv{z}_s} = 2$.  If $\norm{\vv{z}_s} =2$ for every $1 \leq s \leq \mathcal{O}$, then the same holds for all $s \geq 1$.  In this case, \eqref{truncation when rational: e} and Corollary \ref{ft when n=3: C} imply that $\ftlm{\vv{t}} = 2/D$.  Otherwise, Corollary \ref{ft when n=3: C} instead implies that 
\[ \ftlm{\vv{a}} = \max \left \{ \frac{\tr{\frac{2A}{D}}{{\mu}}}{A}, \frac{\tr{\frac{2B}{D}}{{\mu}}}{B}, \frac{\tr{\frac{2C}{D}}{{\mu}}}{C} \right \}. \] 
The claimed formula for $\ftlm{\vv{a}}$ follows from applying Remark \ref{truncations of rationals: rem} to simplify this expression, and the last assertion is justified by the observation that if $p \equiv 1 \bmod D$, then $\vv{z}_s = \vv{u}$ for all $s \geq 1$.
\end{proof}

\begin{rem}[$F$-pure thresholds as a function of the class of $p$] 
\label{rem:  variation of fpt with p}
In the statement of Corollary \ref{n=3 integral case: C}, the value of $\mu$ depends only on the class of $p$ modulo $\norm{\vv{a}}$.  Thus, is this context, Corollary \ref{n=3 integral case: C} shows that for every unit $u$ modulo $\norm{\vv{a}}$, there exist a positive integer $\mu(u)$ and a nonnegative integer $\mathcal{E}(u)$ such that 
\[ \ftlm{\vv{a}} = \frac{2}{\norm{\vv{a}}} - \frac{\mathcal{E}(u)}{p^{\mu(u)}} \] whenever $p \equiv u \bmod \norm{\vv{a}}$.

Though we will not elaborate on this here, this description of the $F$-pure threshold of the polynomial $\ell^{\vv{a}}$, when combined with certain well-known facts  (see, \eg \cite[Proposition~1.9]{mustata+takagi+watanabe.F-thresholds} or \cite[Key Lemma~3.1]{hernandez.F-purity_of_hypersurfaces}) allows us to give a positive answer to \cite[Problem~3.10]{mustata+takagi+watanabe.F-thresholds} in this setting.
\end{rem}

\section{\texorpdfstring{$\bm F$-pure thresholds of homogeneous polynomials in two variables}
	{F-pure thresholds of homogeneous polynomials in two variables}}\label{s:  fpt}

Let $G$ be a non-constant form in $\kk[x,y]$ and $\idealb=\ideal{U,V}$, where $U,V\in \kk[x,y]$ are non-constant relatively prime forms. 
Extending $\kk$, if necessary, we  write $ G=\ell^\vv{a}$, where $\ell=(\ell_1,\ldots,\ell_n)$ is a collection of pairwise prime linear forms
	and $\vv{a}\in \NNpos^n$.
Let $\lambda=\deg UV/\deg G$. 
As before, we shall say that ``$\ftb(G)$ is determined by a critical point $\vv{c}$'' if 
	$\ftb(G)=\max\{ c_1/a_1, \ldots, c_n/a_n\} <\lambda$ or, equivalently, 
	$\vv{c}<\lambda \vv{a}$; see Proposition~\ref{prop: CPs vs normalized points} and Theorem~\ref{thm: CPs and the FT function}.
According to Theorem~\ref{thm: CPs and the FT function}, we have three mutually exclusive possibilities  for $\ftb(G)=\ftlb{\vv{a}}$: 

\begin{enumerate}[(A)] 	
	\item	$\ftb(G)$ is determined by a critical point in $\NN^n$. 
		This is the case if and only if $\tr{\lambda\vv{a}}{0}\in \Upper$.
	\item	$\ftb(G)$ is determined by a critical point not in $\NN^n$. 
		This is the case if and only if $\tr{\lambda\vv{a}}{0}\notin \Upper$ but $\tr{\lambda\vv{a}}{e}\in \Upper$ for some $e\ge 1$.
	\item $\ftb(G)$ is determined by the trivial region: $\ftb(G)=\lambda$.
\end{enumerate}

\begin{rem}\label{rem: truncation for FTs}
	In case (B), taking $q=p^e>1$ to be the least power of $p$ such that $q\vv{c}\in \NN^n$, 
		Remark~\ref{rem: bound on distance between FT and "LCT"} shows that
		\begin{equation*}
			0<\lambda-\ftb(G)\le \frac{n-2}{q\deg G}< \frac{1}{q}.
		\end{equation*}
	If $\ftb(G)\in \QQ_{q}$ (which will be the case, \eg when $G$ is square free), then the above inequalities
		and Remark~\ref{Characterizations of truncations: R} show that $\ftb(G)=\tr{\lambda}{e}$.
	This reproduces a result obtained by N\'u\~nez-Betancourt, Witt, Zhang, and the first author,
		through completely different methods \cite{hernandez+others.fpt_quasi-homog.polys}.
	For more on this, see Theorem~\ref{thm: FPTs of quasi-homogeneous polynomials}.
\end{rem}
	
\begin{exmp}
	In the above remark, the conclusion that $\ftb(G)$ is a truncation of $\lambda$ does not hold under the looser assumption that 
		$\ftb(G)\in \QQ_{p^\infty}$. 
	If $G=(xy)^{49}((x+y)(x+2y)(x+4y))^{13}\in \FF_7[x,y]$ and $\idealm=\ideal{x,y}$, for instance, 
		then $\ftm(G)=4/343\in \QQ_{7^3}$ (determined by the critical point $(4,4,1,1,1)/7$ associated with $\ell=(x,y,x+y,x+2y,x+4y)$ 
		and $\idealm$), while $\tr{\lambda}{3}=\tr{2/137}{3}=5/343$.
\end{exmp}

We now specialize to the case where $\idealb=\idealm=\ideal{x,y}$.
Recall that $\ftm(G)$ is the \emph{$F$-pure threshold} of $G$, denoted by $\fpt(G)$.

\begin{rem}\label{rem: integral CPs w.r.t max ideal}	
		Proposition~\ref{prop: characterization of CP}\iref{item: second characterization of CP}  
		shows that the only critical points with integer coordinates associated with $\ell$ and $\idealm$ 
		are $\canvec_1,\ldots,\canvec_n$.
	Moreover, those are the only critical points  that have \emph{some} positive
		integer coordinate.
	Indeed, if $\vv{c}$ is a critical point and $c_i\in \NNpos$, then $\vv{c}\ge \canvec_i$, and therefore $\vv{c}=\canvec_i$.
\end{rem}

The scarcity of positive integer coordinates in critical points observed above has interesting consequences.
Case (A) becomes relegated to a  ``degenerate case'' where the multiplicity of some $\ell_i$ in $G$ is 
	too large---$\fpt(G)$ is determined by $\canvec_i$ if and only if $\lambda \vv{a}=(2/\deg G)\vv{a}>\canvec_i$ 
	if and only if $a_i>(\deg G)/2$, in which case $\fpt( G)=\max\{ 0, 1/a_i \} = 1/a_i$.
Note that the same conclusion is reached if $a_i=(\deg G)/2$ (so $\lambda\vv{a}\ge \canvec_i$), 
	but in that case $\fpt(G)= 1/a_i = \lambda$, so this falls actually under case (C).

In case (B), no nonzero coordinate of the critical point $\vv{c}$ is integral, so the minimal denominator of $\fpt(G)=\max\{c_i/a_i\}$ is of the form 
	$kp^e$, with $e\ge 1$ and $k$ a factor of one of the multiplicities $a_i$.
To put this observation in context, we must digress momentarily with some characteristic~0 considerations.

\begin{defn}\label{defn: good/bad prime}
	Let $G_0\in \QQ[x,y]$ be a non-constant form.
	We say that a prime~$p$ is a \emph{good prime} associated with $G_0$ if there exists a reduction modulo~$p$ of $G_0$ in $\FF_p[x,y]$, 
		which we denote by $G_p$, and the factorization of $G_p$ over $\closure{\FF}_p$ is similar to the factorization of $G_0$ over $\CC$, 
		in the sense that those factorizations have the same number of pairwise prime linear factors and the same multiplicities.  
	If that is not the case, we say that $p$ is a \emph{bad prime}.
\end{defn}

\begin{lem}\label{lem: finiteness of bad primes} 
	There are at most finitely many bad primes associated with a fixed non-constant form $G_0 \in \QQ[x,y]$.
\end{lem}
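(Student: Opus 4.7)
The plan is to show that all but finitely many primes are good by enumerating a finite list of nonzero integer invariants of $G_0$ whose prime divisors are the only candidates for bad primes.

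First, I would replace $G_0$ by a primitive integer form (a $\QQ^{\times}$-multiple of $G_0$ whose coefficients are integers with gcd equal to $1$). This does not alter the factorization pattern over $\CC$ or over $\closure{\FF}_p$, and it guarantees that $G_p$ is well defined and nonzero for every prime $p$. Next, I would strip off the $y$-factor: $G_0 = y^{a_0}\cdot F(x,y)$ in $\ZZ[x,y]$ with $F(x,0)\ne 0$, so the coefficient $c_0$ of $x^{\deg F}$ in $F$ is a nonzero integer. Over $\QQ$ the dehomogenization admits a canonical factorization
\[
F(x,1) \;=\; c_0\prod_{k=1}^{s} g_k(x)^{b_k},
\]
with the $g_k\in\ZZ[x]$ pairwise distinct, monic, and irreducible (Gauss's lemma). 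Since $F(x,1)\in\QQ[x]$, Galois invariance forces the multiplicity in $\CC$ of every root of $g_k$ to be the common value $b_k$. Thus the $\CC$-factorization of $G_0$ into pairwise prime linear forms has $s$ or $s+1$ distinct factors, with multiplicities $a_0$ (if $a_0>0$) and $b_k$ (repeated $\deg g_k$ times) for $k=1,\dots,s$.

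Second, I would declare a prime $p$ excluded if it divides at least one of the nonzero integers $c_0$, $\mathrm{disc}(g_k)$ for some $k$, or $\mathrm{Res}(g_k,g_{k'})$ for some pair $k\ne k'$. This is a finite list. For any prime $p$ outside it: reduction mod $p$ preserves $\deg F(x,1)$ (the leading coefficient $c_0$ survives); each $g_k\bmod p$ is separable and so splits over $\closure{\FF}_p$ into $\deg g_k$ pairwise distinct linear factors; and distinct $g_k\bmod p$ remain pairwise coprime. Hence
\[
F(x,1)\bmod p \;=\; \bar c_0 \prod_{k=1}^{s}(g_k\bmod p)^{b_k},
\]
and homogenizing back and restoring the $y^{a_0}$-factor shows that $G_p$ splits over $\closure{\FF}_p$ into the same number of pairwise prime linear factors as $G_0$ does over $\CC$, with identical multiplicities; so $p$ is a good prime.

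The only subtlety is the Galois-invariance of multiplicity on a given orbit; once that is in hand, the argument reduces entirely to the elementary observation that each of $c_0$, $\mathrm{disc}(g_k)$, and $\mathrm{Res}(g_k,g_{k'})$ is a single nonzero integer and therefore is divisible by only finitely many primes. No deeper number-theoretic input (such as Hensel's lemma or ramification theory) is required.
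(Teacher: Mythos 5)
Your argument is correct in substance, but it takes a genuinely different route from the paper's. The paper factors $G_0$ over $\CC$ as $\gamma\, x^ly^m\prod_i(x-\alpha_iy)^{k_i}$ and forms the finitely generated $\ZZ$-algebra $A$ obtained by adjoining $\gamma$, the $\alpha_i$, and the inverse of $\prod_{i<j}(\alpha_i-\alpha_j)\cdot\gamma\alpha_1\cdots\alpha_r$; for $p\gg0$ it reduces modulo a maximal ideal of $A$ containing $p$ to land in a finite field of characteristic $p$ where all the relevant quantities stay invertible, hence nonzero. That is a soft ``spreading out'' argument: it never names the bad primes. You instead stay over $\QQ$, dehomogenize, and exhibit an explicit finite list of nonzero integers (leading coefficient, discriminants, resultants of the irreducible factors of $F(x,1)$) whose prime divisors are the only candidates for bad primes. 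Your route is more elementary and, unlike the paper's, effective --- it yields a computable bound on the bad primes --- at the cost of some bookkeeping with the dehomogenization, the factor of $y$, and Galois orbits of roots. (Your Galois-invariance step can even be shortened: each $g_k$ is separable because it is irreducible over a field of characteristic zero, and the $g_k$ are pairwise coprime, so every root of $g_k$ automatically has multiplicity exactly $b_k$ in $F(x,1)$.)

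One small slip to repair: you cannot in general write $F(x,1)=c_0\prod_k g_k(x)^{b_k}$ with the $g_k$ \emph{monic} in $\ZZ[x]$ (consider $2x^2+x-1=(2x-1)(x+1)$). Take the $g_k$ primitive in $\ZZ[x]$ instead; then $c_0$ is, up to sign, the product of their leading coefficients raised to the $b_k$, so your exclusion of primes dividing $c_0$ already guarantees that each $g_k\bmod p$ keeps its degree, and the discriminant and resultant conditions go through verbatim for the primitive integral factors. With that adjustment the proof is complete.
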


\begin{proof}  
	The proof relies on the following facts. 
	Given a finitely generated $\ZZ$-algebra $A$ containing $\ZZ$, for all but finitely many primes $p$ there exists a maximal ideal 
		of $A$ containing $p$.  
	Moreover, if $\mathfrak{M}$ is a maximal ideal of $A$ containing a prime number $p$, then $A/\mathfrak{M}$ 
		is a finite field of characteristic $p$.  
	For a justification of these facts, see, \eg \cite[Corollary~3.2]{hernandez.thesis}.

	We now proceed with the proof. 
	Suppose $G_0$ factors over $\CC$ as 
		\[ G_0 = \gamma \cdot x^ly^m(x-\alpha_1y)^{k_1}\cdots(x-\alpha_r y)^{k_r},\] 
		where $\gamma,\alpha_1, \ldots, \alpha_r\in\CC^\times$ and the $\alpha_i$ are distinct.  
	Let $A$ be the $\ZZ$-algebra generated by $\gamma$ and the $\alpha_i$, together with 
		$\prod_{i<j}(\alpha_i-\alpha_j)^{-1}\cdot(\gamma\cdot \alpha_1\cdots\alpha_r)^{-1}$.  
	According to the facts cited above, given a prime $p \gg 0$, there exists a maximal ideal $\mathfrak{M}$ of $A$ containing $p$.  
	Furthermore, for such $p$ and $\mathfrak{M}$, the field $\kk=A/\mathfrak{M}$ is a finite extension of $\FF_p$ 
		over which the image of $G_0$ has a factorization similar to that of $G_0$, because, by design, the images of $\gamma$ 
		and the $\alpha_i$  and $\alpha_i-\alpha_j$ in $\kk$ are nonzero, since they are invertible.
	So $p$ is a good prime.
\end{proof}

\begin{defn}\label{defn: degeneracy}
	A form $F$ in two variables over some field $K$ is \emph{degenerate}, of \emph{degeneracy type $m$},  if it has a linear factor 
		over $\closure{K}$ with multiplicity $m>(\deg F)/2$.  Note that each degenerate form has a unique degeneracy type.
\end{defn}

Let $G_0\in \QQ[x,y]$ be a non-constant form.  
As discussed in the introduction, the log canonical threshold of $G_0$, denoted by $\lct(G_0)$, is an 
	invariant measuring the singularity of $G_0$ at the origin, and is defined via a log resolution of singularities.  
In the context of this paper, the most important property of $\lct(G_0)$ is that $\lim_{p \to \infty} \fpt(G_p) = \lct(G_0)$.  

If $G_0$ is degenerate, of degeneracy type $m$, then for each good prime $p$ the computation of $\fpt(G_p)$ falls under case (A), 
	and $\fpt(G_p)=1/m$.
Consequently, $\lct(G_0)=\lim_{p\to \infty}\fpt(G_p)=1/m$.
If $G_0$ is non-degenerate, then for each good prime $p$ the computation of $\fpt(G_p)$ falls under cases (B) or (C), 
	and the inequalities in Remark~\ref{rem: truncation for FTs} show that $\lct(G_0)=\lim_{p\to \infty}\fpt(G_p)=\lambda=2/\deg(G_0)$.
Thus, in the paragraph before Definition~\ref{defn: good/bad prime} we have shown the following result, which provides 
	an affirmative answer to Question~\ref{question: Schwede} in the two-variable homogeneous setting.

\begin{thm}\label{thm: p in denominator}
	Let $G_0\in \QQ[x,y]$ be a non-constant form. 
	Let $p$ be a good prime associated with $G_0$, and let $G_p$ be the image of 
		$G_0$ in $\FF_p[x,y]$.
	If $\fpt(G_p)\ne \lct(G_0)$, then the minimal denominator of $\fpt(G_p)$ is of the form $kp^e$, where $e\ge 1$ and 
		$k$ divides the multiplicity of some linear factor \textup(over $\CC$\textup) of $G_0$.
	\qed
\end{thm}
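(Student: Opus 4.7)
The plan is to apply the structure theorem (Theorem~\ref{thm: CPs and the FT function}) to $G_p$ after extending scalars to $\closure{\FF}_p$. Goodness of $p$ guarantees $G_p = \ell^{\vv{a}}$ for some pairwise prime linear forms $\ell = (\ell_1, \ldots, \ell_n)$ and $\vv{a} \in \NNpos^n$ whose multiplicities match those of the factorization of $G_0$ over $\CC$; in particular $\deg G_p = \deg G_0$. Taking $\vv{t} = \vv{a}$ and $\idealb = \idealm$, the value $\fpt(G_p) = \ftlm{\vv{a}}$ falls into exactly one of the three cases (A), (B), (C) listed immediately after Theorem~\ref{thm: CPs and the FT function}. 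I will show that the hypothesis $\fpt(G_p) \ne \lct(G_0)$ excludes (A) and (C), leaving (B), and then read off the denominator from the critical point produced there.

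To eliminate case (A), I invoke Remark~\ref{rem: integral CPs w.r.t max ideal}: every critical point associated with $\idealm$ that has a positive integer coordinate must equal some $\canvec_i$. Hence (A) occurs only when $a_{i_0} > (\deg G_p)/2$ for some $i_0$, which makes $G_0$ degenerate of type $m = a_{i_0}$ and gives $\fpt(G_p) = 1/m$. The identical analysis applies to every good prime $p'$, so $\lct(G_0) = \lim_{p' \to \infty} \fpt(G_{p'}) = 1/m = \fpt(G_p)$, contradicting the hypothesis. To eliminate case (C), note that with (A) now ruled out $G_0$ is non-degenerate, so every good prime $p'$ falls under (B) or (C); in either case, Remark~\ref{rem: truncation for FTs} gives the bound $|\fpt(G_{p'}) - \lambda| < (n-2)/(p' \deg G_0)$ with $\lambda = 2/\deg G_0$. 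Passing to the limit yields $\lct(G_0) = \lambda$, so case (C) for $p$ would again force $\fpt(G_p) = \lct(G_0)$.

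We are therefore in case (B): $\fpt(G_p) = \max_i c_i/a_i$ for a critical point $\vv{c} \notin \NN^n$. Corollary~\ref{cor: local maxima have coords in ZZ[1/p]} places $\vv{c} \in \QQ_{p^\infty}^n$, and Remark~\ref{rem: integral CPs w.r.t max ideal} tells us that, since $\vv{c}$ is not one of the $\canvec_i$, no coordinate of $\vv{c}$ is a positive integer. Thus each nonzero $c_i$ has the form $b_i/p^{e_i}$ in lowest terms with $e_i \geq 1$. Let $i_0$ be an index achieving the maximum, so $c_{i_0} > 0$, and write $c_{i_0} = b/p^e$ in lowest terms, with $\gcd(b,p) = 1$ and $e \geq 1$. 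Using $\gcd(b, p^e a_{i_0}) = \gcd(b, a_{i_0})$, the minimal denominator of $\fpt(G_p) = b/(p^e a_{i_0})$ is $p^e \cdot (a_{i_0}/\gcd(b, a_{i_0}))$, of the required form $kp^e$ with $k \mid a_{i_0}$; goodness of $p$ identifies $a_{i_0}$ with the multiplicity of a linear factor of $G_0$ over $\CC$.

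The main obstacle is the non-degenerate identification $\lct(G_0) = 2/\deg G_0$, which I avoid handling via a log resolution and instead deduce from the limit characterization $\lct(G_0) = \lim_{p' \to \infty} \fpt(G_{p'})$ together with the sharp truncation bound in Remark~\ref{rem: truncation for FTs}. Once the reductions to case (B) are in place, the remainder is essentially bookkeeping driven by Remark~\ref{rem: integral CPs w.r.t max ideal}, which supplies the crucial structural fact that critical points associated with $\idealm$ are either standard basis vectors or have all nonzero coordinates with nontrivial $p$-power denominators.
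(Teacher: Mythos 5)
Your proof is correct and follows essentially the same route as the paper: the same trichotomy (A)/(B)/(C) from Theorem~\ref{thm: CPs and the FT function}, elimination of (A) and (C) via degeneracy and the truncation bound of Remark~\ref{rem: truncation for FTs} to identify $\lct(G_0)$, and the denominator bookkeeping in case (B) driven by Remark~\ref{rem: integral CPs w.r.t max ideal}. The paper's ``proof'' is precisely the discussion preceding the theorem statement, which you have reproduced (with slightly more explicit detail on extracting $kp^e$ from $\max_i c_i/a_i$).
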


\begin{rem}\label{rem: weakening}
	If we weaken the notion of good prime, requiring only that~$G_0$ and~$G_p$ be both non-degenerate or both degenerate, 
		of the same degeneracy type, then an alternate version of the above theorem still holds, where the conclusion states that 
		\emph{$k$ divides the multiplicity of some linear factor \textup(over $\closure{\FF}_p$\textup) of $G_p$}.
\end{rem}

\begin{exmp}
\label{example: lowcharacteristic}
	Let $G_0=x(x+y)(x+6y)$.
	Then $G_5=x(x+y)^2$, so $\fpt(G_5)=1/2\ne 2/3=\lct(G_0)$, and yet $\fpt(G_5)$ has  a denominator prime to 5. 
	This shows the need for requiring $p$ to be a good prime in Theorem~\ref{thm: p in denominator}.
	That result may otherwise not hold when the factorizations of $G_0$ and $G_p$ are 
		``too different''.
\end{exmp}

Going back to the three cases discussed earlier in this section, while case (A) is clearly delimited, distinguishing between cases (B) and (C) is delicate.
For that, it is useful to know an upper bound for the denominator of the critical point 
	that determines $\fpt(G)$ in case (B).
When $\deg G$ is prime to $p$ we have such a bound---a consequence of the 
	``forbidden intervals'' theorem of Blickle, Musta{\c{t}}\u{a}, and Smith \cite[Proposition~4.3]{blickle+mustata+smith.F-thresholds_hyper},
	generalized by the first author  \cite[Proposition~4.8]{hernandez.F-purity_of_hypersurfaces}, 
	which states that for any $\beta\in (0,1)_{q}$  there are no $F$-pure thresholds 
	of hypersurfaces in characteristic $p$ in the interval $(\beta, \beta q/(q-1))$.

\begin{lem}\label{lem: forbidden interval}
	Let  $\lambda=a/b\in (0,1]\cap\QQ$.
	Suppose $b$  is prime to $p$, and let $\mu$ be the multiplicative order of $p$ in $(\ZZ/b\ZZ)^\times$.
	Then no $F$-pure threshold of a polynomial over a field of characteristic $p$ lies in the interval $(\tr{\lambda}{\mu},\lambda)$.
\end{lem}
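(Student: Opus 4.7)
The plan is to reduce the lemma to the Blickle--Musta{\c{t}}\u{a}--Smith forbidden intervals theorem (cited just above the statement) by an appropriate choice of $\beta$ and $q$. The role of the hypothesis that $b$ is prime to $p$, together with the definition of $\mu$, is precisely to supply a power of $p$ that behaves nicely with respect to $b$: namely, $p^\mu \equiv 1 \pmod b$, so $b$ divides $p^\mu - 1$.

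First, set $q = p^\mu$ and $c = a(p^\mu - 1)/b$, which is a positive integer by the divisibility just observed, satisfying $0 < c \le p^\mu - 1 = q - 1$. Then $\lambda = a/b = c/(q-1)$.

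Second, I identify $\tr{\lambda}{\mu}$. Using the geometric series $1/(q-1) = \sum_{s \ge 1} p^{-s\mu}$, I get
\[
\lambda \;=\; \sum_{s \ge 1} c \cdot p^{-s\mu}.
\]
Since $0 < c \le q-1 = p^\mu - 1$, expanding $c$ in base $p$ as $c = \alpha_1 p^{\mu-1} + \cdots + \alpha_\mu$ with $\alpha_i \in \{0,\ldots,p-1\}$ exhibits a purely periodic, non-terminating base-$p$ expansion of $\lambda$ with period $\mu$ and repeating block $\alpha_1\alpha_2\cdots\alpha_\mu$ (this block is non-zero, and when $\lambda = 1$ it is $(p-1)(p-1)\cdots(p-1)$, consistent with the non-terminating convention). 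Therefore $\tr{\lambda}{\mu} = c/p^\mu = c/q$.

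Third, I apply the cited forbidden intervals theorem with $\beta := \tr{\lambda}{\mu} = c/q \in (0,1)_q$. The theorem then tells us that no $F$-pure threshold of a hypersurface in characteristic $p$ lies in $\bigl(\beta,\, \beta q/(q-1)\bigr)$. A direct calculation shows
\[
\frac{\beta q}{q-1} \;=\; \frac{c}{q-1} \;=\; \lambda,
\]
so the forbidden interval is exactly $(\tr{\lambda}{\mu}, \lambda)$, proving the lemma.

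The only subtle step is the second one: identifying $\tr{\lambda}{\mu}$ with $c/p^\mu$ requires recognizing the repeating-block expansion as the non-terminating one and handling the edge case $\lambda = 1$. Everything else is purely formal manipulation once $b \mid p^\mu - 1$ is exploited.
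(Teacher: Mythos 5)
Your proof is correct and follows essentially the same route as the paper: set $q=p^\mu$, observe that $k=(q-1)\lambda$ is a positive integer at most $q-1$, use the geometric series to identify $\tr{\lambda}{\mu}=k/q$, and invoke the forbidden-intervals theorem with $\beta=k/q$ so that $\beta q/(q-1)=\lambda$. Your extra care with the non-terminating-expansion convention and the case $\lambda=1$ is a welcome refinement of the same argument.
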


\begin{proof}
	Let  $q=p^\mu$ and $k= (q-1)\lambda$; then $k\in\NN$ and 
		\[\lambda=\frac{k}{q-1}=\frac{k}{q}\left(1+\frac{1}{q}+\frac{1}{q^2}+\cdots\right).\]
	Since $k<q$, the above equation shows that $\tr{\lambda}{\mu}=k/q$, so $\lambda=\tr{\lambda}{\mu}q/(q-1)$,
		and the ``forbidden intervals'' theorem gives the result.
\end{proof}

\begin{prop}
	Let $G\in \kk[x,y]$ be a form of degree $d>0$.
	Suppose $\fpt(G)<\lambda=2/d$ and the minimal denominator of $\lambda$ is prime to $p$.
	Let~$\mu$ be the multiplicative order of $p$ modulo that denominator.
	Then $\fpt(G)$ is determined by a critical point with coordinates in $\QQ_{p^\mu}$.
\end{prop}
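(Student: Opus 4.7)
My plan is to combine the forbidden-interval Lemma~\ref{lem: forbidden interval} with the structural Theorem~\ref{thm: CPs and the FT function}.  After extending $\kk$ if needed, I write $G=\ell^\vv{a}$ for an $n$-tuple $\ell=(\ell_1,\ldots,\ell_n)$ of pairwise prime linear forms and $\vv{a}\in\NNpos^n$, so that $\fpt(G)=\ftm(\ell^\vv{a})$ in the notation of Section~\ref{s: F-threshold function}.  Since the minimal denominator of $\lambda$ is prime to $p$ with multiplicative order $\mu$ and $\fpt(G)<\lambda$ by hypothesis, Lemma~\ref{lem: forbidden interval} forbids $\fpt(G)$ from lying in $(\tr{\lambda}{\mu},\lambda)$, and consequently $\fpt(G)\le \tr{\lambda}{\mu}$.

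Next I would promote this numerical inequality to the set-theoretic statement that $\tr{\lambda\vv{a}}{\mu}\in\Upper$.  By definition of the $F$-threshold function, $\fpt(G)\,\vv{a}\in\Boundary\subseteq\Upper$, and since $\Upper$ is closed under componentwise increase (Corollary~\ref{cor: properties of the regions}(4)), scaling $\vv{a}$ by $\tr{\lambda}{\mu}\ge\fpt(G)$ yields $\tr{\lambda}{\mu}\,\vv{a}\in\Upper$.  I then check that $\tr{\lambda}{\mu}\,a_i\le \tr{\lambda a_i}{\mu}$ for each $i$: the left side is an element of $\QQ_{p^\mu}$ strictly below $\lambda a_i$ (since $\tr{\lambda}{\mu}<\lambda$), and Remark~\ref{rem: facts about p expansions} identifies $\tr{\lambda a_i}{\mu}$ as the \emph{largest} such element.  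One more application of upward closure then transfers the membership $\tr{\lambda}{\mu}\,\vv{a}\in\Upper$ to $\tr{\lambda\vv{a}}{\mu}\in\Upper$.

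I finish by invoking Theorem~\ref{thm: CPs and the FT function} with $\vv{t}=\vv{a}$, $\idealb=\idealm$, and truncation index $e=\mu$: the membership $\tr{\lambda\vv{a}}{\mu}\in\Upper$ delivers a critical point $\vv{c}\le \tr{\lambda\vv{a}}{\mu}$ with coordinates in $\QQ_{p^\mu}$, and the theorem asserts that $\fpt(G)=\ftm(\ell^\vv{a})$ is determined by $\vv{c}$, which is exactly the conclusion desired.  The only mildly subtle step is the componentwise comparison $\tr{\lambda}{\mu}\,\vv{a}\le \tr{\lambda\vv{a}}{\mu}$, since a scaled truncation and the truncation of a scaled vector are not equal in general; fortunately the inequality runs in the direction needed to push $\Upper$-membership from the coarser object to the finer one referenced by the structural theorem.
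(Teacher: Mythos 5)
Your proof is correct. Both your argument and the paper's open with the same key step: Lemma~\ref{lem: forbidden interval} upgrades the hypothesis $\fpt(G)<\lambda$ to the bound $\fpt(G)\le\tr{\lambda}{\mu}$. After that the two routes genuinely diverge. The paper works directly with the syzygy gap fractal: writing $\vv{c}$ for the critical point that determines $\fpt(G)$, it notes that $\tr{\lambda}{\mu}\vv{a}$ sits on the ray through $\vv{a}$ strictly between $\fpt(G)\,\vv{a}\in\boundary[\vv{c},\vvv{\infty})$ and $\lambda\vv{a}\in\boundary\Trivial$, hence inside the open ball $\oball{\Dlm(\vv{c})}{\vv{c}}$ governed by the local maximum at $\vv{c}$, and then Remark~\ref{rem: don't need to go to finer mesh to find local max} forces $\vv{c}\in\QQ_{p^\mu}^n$. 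You instead route everything through the upper region: upward closure (Corollary~\ref{cor: properties of the regions}) promotes $\fpt(G)\,\vv{a}\in\Boundary\subseteq\Upper$ to $\tr{\lambda}{\mu}\vv{a}\in\Upper$, your componentwise inequality $\tr{\lambda}{\mu}\,a_i\le\tr{\lambda a_i}{\mu}$ is correct (both sides lie in $\QQ_{p^\mu}$, both are below $\lambda a_i$, and Remark~\ref{rem: facts about p expansions} makes the right side the largest such element of $\QQ_{p^\mu}$), and Theorem~\ref{thm: CPs and the FT function} with $e=\mu$ finishes. Your version has the virtue of using only the packaged structure theorem rather than the finer remark about local maxima of $\delta_C$; the paper's version avoids the truncation comparison and directly pins down the $\QQ_{p^\mu}$-rationality of the critical point already known to determine $\fpt(G)$. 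A small simplification of your route is available: once you know $\tr{\lambda}{\mu}\vv{a}\in\Upper\cap\QQ_{p^\mu}^n$, Corollary~\ref{cor: u in U_q iff u >= CP a/q} already supplies a critical point $\vv{c}\in\QQ_{p^\mu}^n$ with $\vv{c}\le\tr{\lambda}{\mu}\vv{a}\le\frac{2}{d}\vv{a}$, and Proposition~\ref{prop: CPs vs normalized points} shows this $\vv{c}$ determines $\fpt(G)$, so the comparison between $\tr{\lambda}{\mu}\vv{a}$ and $\tr{\lambda\vv{a}}{\mu}$ can be bypassed entirely.
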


\begin{proof}
	Write $G=\ell^\vv{a}$, as in the beginning of this section, and let $\vv{c}$ be the critical point that determines $\fpt(G)$.  
	Lemma~\ref{lem: forbidden interval} gives us the following inequalities:
		\[\max \left \{ \frac{c_1}{a_1}, \ldots, \frac{c_n}{a_n} \right \}=\fpt(G)\le \tr{\lambda}{\mu}<\lambda=\frac{2}{d}.\]
	As $\max\{c_i/a_i\} \cdot \vv{a} \in \boundary [\vv{c},\vvv{\infty})$ and 
		$\frac{2}{d} \cdot \vv{a} = \frac{2}{\norm{\vv{a}}} \cdot \vv{a} \in \boundary \Trivial$, multiplying  
		each of the terms in the above inequalities by $\vv{a}$ shows that $\tr{\lambda}{\mu}\vv{a}$ lies in $\oball{\Dlm(\vv{c})}{\vv{c}}$,
		the region where the function $\Dlm$ attached to $\ell$ and $\idealm$ is determined by $\vv{c}$.
	Because $\tr{\lambda}{\mu}\vv{a}\in \QQ_{p^\mu}^n$, Remark~\ref{rem: don't need to go to finer mesh to find local max}
		tells us that $\vv{c}\in \QQ_{p^\mu}^n$ as well. 
\end{proof}

We summarize the above observations in the following theorem.

\begin{thm}\label{thm: summary homogeneous case}
	Let $G\in \kk[x,y]$ be a form of degree $d>0$.
	Write $G=\ell_1^{a_1}\cdots\ell_n^{a_n}$, where the $\ell_i$ are pairwise prime linear forms in $\closure{\kk}[x,y]$
		and $a_i>0$, for each $i$. 
	\begin{enumerate}[(1)]
		\item If $a_i\ge d/2$, for some $i$, then $\fpt(G)=1/a_i$.
		\item	Suppose $a_i< d/2$, for each $i$.
		Set $\ell=(\ell_1,\ldots,\ell_n)$ and $\vv{a}=(a_1,\ldots,a_n)$. 
		Then exactly one of the following holds\textup:
		\begin{itemize}[wide=0pt]
			\item	$\tr{\frac{2}{d} \cdot \vv{a}}{e}$ lies in the upper region attached to $\ell$ and $\idealm=\ideal{x,y}$, 
					for some $e\ge 1$. 
				Thus, there exists a unique
					critical point $\vv{c}\in \QQ_{p^e}^n$ associated with $\ell$ and $\idealm$
					such that $\vv{c}\le \tr{\frac{2}{d} \cdot \vv{a}}{e}$, and  
					\[ \fpt(G)= \max \left \{ \frac{c_1}{a_1}, \ldots, \frac{c_n}{a_n} \right \} .\]
				The critical point $\vv{c}$ has no nonzero integer coordinates, 
					and thus the minimal denominator of $\fpt(G)$ is of the form $kp^m$, where $1\le m\le e$ and 
					$k$ is a factor of some multiplicity $a_i$.
				Moreover, if the minimal denominator of $2/d$ is prime to $p$, then the above holds for some $e$ no greater than   
					the multiplicative order of $p$ modulo that denominator.
				Finally, if $\fpt(G)\in \QQ_{p^e}$ \textup(\eg if $G$ is square free\textup), then $\fpt(G)=\tr{2/d}{e}$.					
			\item $\fpt(G)=2/d$.
				\qed
		\end{itemize}
	\end{enumerate}
\end{thm}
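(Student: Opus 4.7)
The plan is to assemble this theorem from the results developed earlier in the section, primarily Theorem~\ref{thm: CPs and the FT function} specialized to $\idealb = \idealm$, together with Remark~\ref{rem: integral CPs w.r.t max ideal} on integer critical points, the proposition preceding the theorem (on forbidden intervals), and Remark~\ref{rem: truncation for FTs}.

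For part~(1), I would compute the zeroth truncation of $(2/d)\vv{a}$. If $a_i > d/2$, then $(2/d)a_i > 1$, so the floor of the $i$-th coordinate is at least $1$, while the constraint $\sum_j a_j = d$ forces $a_j < d/2$ for $j\ne i$, making all other floors zero. Thus $\tr{(2/d)\vv{a}}{0}\ge \canvec_i$, which is itself a critical point and so lies in $\Upper$. Theorem~\ref{thm: CPs and the FT function} then places us in case~(A), and Remark~\ref{rem: integral CPs w.r.t max ideal} identifies $\vv{c}=\canvec_i$ as the only possible critical point below $(2/d)\vv{a}$, giving $\fpt(G)=1/a_i$.

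For part~(2), the hypothesis $a_i\le d/2$ for each $i$ yields $(2/d)a_i\le 1$, so $\tr{(2/d)\vv{a}}{0}=\vv{0}\in\Lower$. Applying Theorem~\ref{thm: CPs and the FT function} directly gives the dichotomy: either some higher truncation lies in $\Upper$ (first subcase) or $\fpt(G)=2/d$ (second subcase). In the first subcase, the theorem produces a critical point $\vv{c}\in\QQ_{p^e}^n$ with $\vv{c}\le \tr{(2/d)\vv{a}}{e}$ and $\fpt(G)=\max_j\{c_j/a_j\}<2/d$.

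To establish the structure of the minimal denominator, I would argue that $\vv{c}$ cannot equal any $\canvec_j$, since otherwise $\fpt(G)=1/a_j\ge 2/d$, contradicting the strict inequality. By Remark~\ref{rem: integral CPs w.r.t max ideal}, this forces every nonzero coordinate of $\vv{c}$ to be non-integral. Picking $i$ with $\fpt(G)=c_i/a_i$, write $c_i=A/p^m$ in lowest terms; since $c_i$ is positive and non-integral we have $1\le m\le e$, and the minimal denominator of $c_i/a_i$ is $(a_i/\gcd(A,a_i))\,p^m=kp^m$ with $k$ a factor of $a_i$, as claimed. The bound $e\le \mu$ when the denominator of $2/d$ is prime to $p$ is precisely the proposition immediately preceding the theorem. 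For the square-free conclusion, Remark~\ref{rem: truncation for FTs} yields $0<2/d-\fpt(G)<1/p^e$; when $G$ is square free all $a_i=1$, so $k=1$ forces $\fpt(G)\in\QQ_{p^e}$, and Remark~\ref{rem: facts about p expansions} identifies $\fpt(G)$ with $\tr{2/d}{e}$.

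There is no serious obstacle here; the theorem is a compilation. The one step requiring a moment's care is the exclusion of $\vv{c}=\canvec_j$ in part~(2), which combines the strict inequality $\fpt(G)<2/d$ with $a_j\le d/2$ and the classification of integer critical points for $\idealm$.
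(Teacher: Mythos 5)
Your proposal is correct and follows essentially the same route as the paper, which states this theorem with a \qed precisely because it is a compilation of the preceding discussion: the trichotomy (A)/(B)/(C) derived from Theorem~\ref{thm: CPs and the FT function}, Remark~\ref{rem: integral CPs w.r.t max ideal} to exclude nonzero integer coordinates, the proposition on the multiplicative-order bound, and Remark~\ref{rem: truncation for FTs} for the square-free/truncation claim. Your only cosmetic deviations (checking $\tr{(2/d)\vv{a}}{0}\ge\canvec_i$ rather than $\canvec_i<(2/d)\vv{a}$ in part (1), and excluding $\vv{c}=\canvec_j$ via the strict inequality $\fpt(G)<2/d$ rather than via the case (B) definition) are logically equivalent to the paper's argument.
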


\begin{exmp}\label{exmp: computation of fpts}
	Consider the forms $G_1=x^2y^2(x^2+2xy+3y^2)^7$ and $G_2=x^2y^2(x^2+2xy+3y^2)$ in $\mathbb{F}_{25}[x,y]$.
	Let $\ell_1=x$, $\ell_2=y$, and $\ell_3\ell_4=x^2+2xy+3y^2$, and  set $\ell=(\ell_1,\ell_2,\ell_3,\ell_4)$. 
	Finally, let $\vv{a}_1=(2,2,7,7)$, $\vv{a}_2=(2,2,1,1)$, $\lambda_1=2/\deg G_1=1/9$, and 
		$\lambda_2=2/\deg G_2=1/3$.	
	Since the multiplicative order of 5 (mod 9) is 6, to find $\fpt(G_1)$ we look for a truncation $\tr{\lambda_1\vv{a}_1}{e}$
		with $e\le 6$ that lies in the upper region $\Upper$ attached to~$\ell$ and~$\idealm$.
	We find that $\tr{\lambda_1\vv{a}_1}{3}=(27, 27, 97, 97)/125$ lies in $\Upper$, and is itself a critical point.
	Thus $\fpt(G_1)=  \max \bigl\{  \frac{27}{125 \cdot 2}, \frac{97}{125 \cdot 7}   \bigr \} = \frac{97}{875}$.
	As for $G_2$, the multiplicative order of~5 (mod 3) is 2, but $\tr{\lambda_2\vv{a}_2}{2}$ does not
		lie in $\Upper$, so $\fpt(G_2)$ is determined by the trivial region:
		$\fpt(G_2)=\lambda_2=1/3$.
	Figure~\ref{fig: 2D slice} shows a density plot of a section of the function $\Dlm$ attached to $\ell$ and $\idealm$, 
		together with the lines spanned by~$\vv{a}_1$ and~$\vv{a}_2$.
	\begin{figure}
		\centering
		\includegraphics[width=.6\textwidth]{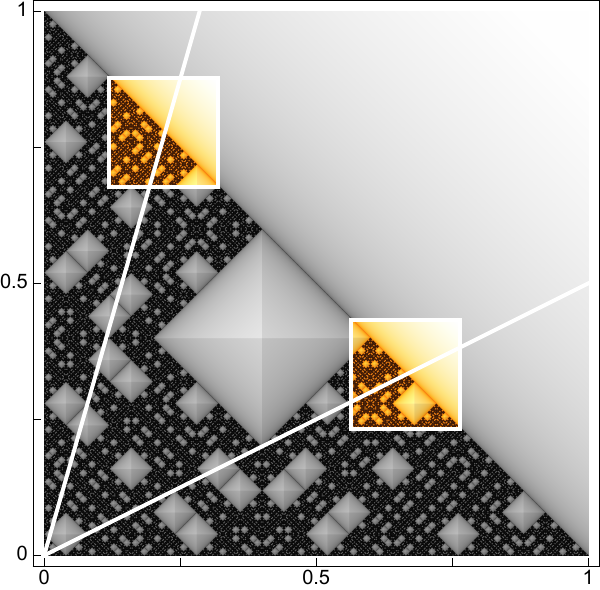} \\[5mm]
		\includegraphics[width=.45\textwidth]{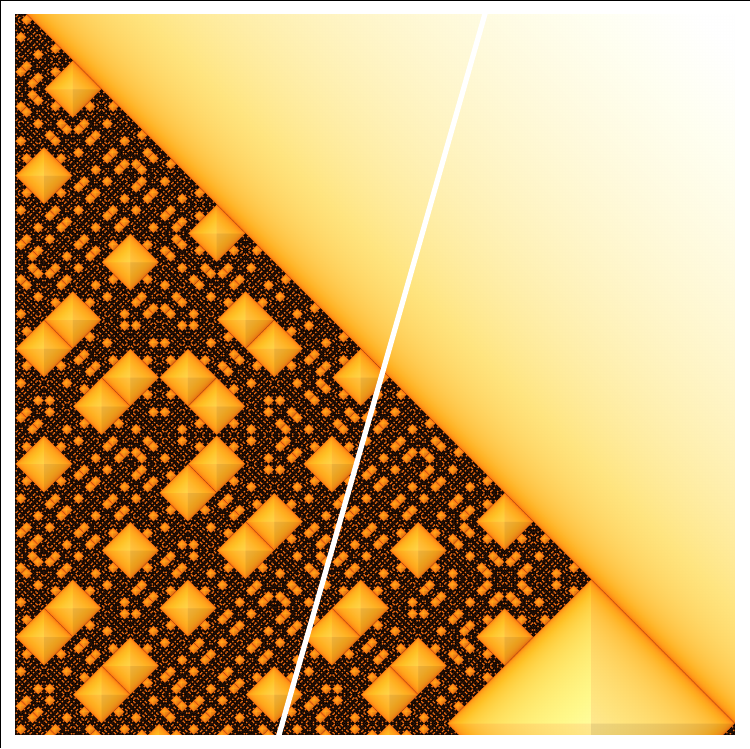}\hskip .1\textwidth
		\includegraphics[width=.45\textwidth]{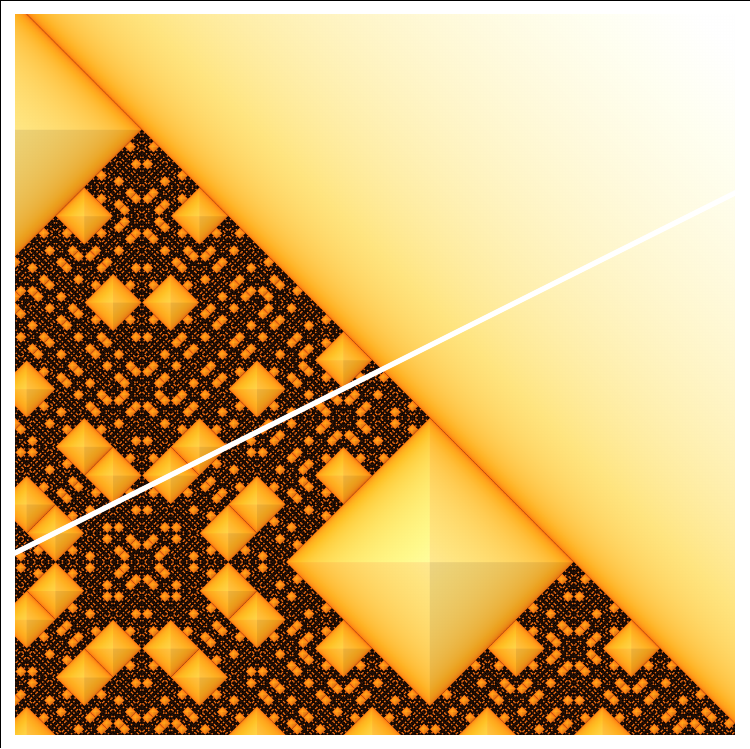}
		\caption{The two dimensional section $(u,v)\mapsto\Dlm(u,u,v,v)$ of $\Dlm$ and the lines spanned by $\vv{a}_1$ and $\vv{a}_2$}
		\label{fig: 2D slice}
	\end{figure}
\end{exmp}

The method used in the above example can be applied to any form in two variables, and leads to an efficient algorithm
	that has been implemented by the second author in the \emph{Macaulay2} \cite{M2} package \emph{PosChar} \cite{poschar}.
More details about this algorithm and its implementation are presented in Appendix~\ref{appendix: algorithm}.

We close this section highlighting a key difference between $F$-pure thresholds  and $F$-thresholds with respect to ideals $\idealb\ne \idealm$, 
	which is that in the latter setting the analogue of Remark~\ref{rem: integral CPs w.r.t max ideal} does not hold.
Integral critical points tend to abound (see, for instance, Example~\ref{exmp: 2D example})
	and, as the following example will show, critical points may have both nonzero integral coordinates and non-integral 
	coordinates---so $F$-thresholds with a denominator prime to $p$ may arise from non-integral critical points.
		
\begin{exmp}
	Let $\ell=(x,y,x+y,x+2y)\in \FF_5[x,y]^4$ and $\idealb=\ideal{x,y^2}\subseteq \FF_5[x,y]$.
	Then $\vv{c}=(2/5,3/5,4/5,1)$ is a critical point associated with $\ell$ and $\idealb$,
		and it is easy to produce forms whose $F$-thresholds with respect to $\idealb$ are
		determined by the last coordinate of $\vv{c}$ and have a denominator prime to 5.
	For instance, if $G=x^7y^{10}(x+y)^{13}(x+2y)^{16}$, then 
		$\ftb(G)=\max \bigl\{ \frac{2}{5 \cdot 7}, \frac{3}{5 \cdot 10},\frac{4}{5 \cdot 13},\frac{1}{16} \bigr \}=\frac{1}{16}$.
\end{exmp} 

\section{\texorpdfstring{$\bm F$-pure thresholds of quasi-homogeneous polynomials in two variables}
	{F-pure thresholds of quasi-homogeneous polynomials in two variables}}\label{s: QH case}

If $K$ is an arbitrary field, we endow $K[X,Y]$  with a non-standard $\NN$-grading where $\deg X=u$, $\deg Y= v$, and $uv\ne 0$.
We shall refer to the homogeneous elements under this grading as \emph{quasi-homogeneous polynomials} and reserve the 
	terms \emph{form} and \emph{homogeneous polynomial} for polynomials that are homogeneous under the standard grading. 
We extend the notion of degeneracy (see Definition~\ref{defn: degeneracy}) to this setting by saying that a quasi-homogeneous 
	polynomial $f\in K[X,Y]$ is \emph{degenerate}, of \emph{degeneracy type $m$}, if it has an irreducible factor over $\closure{K}$ with 
	multiplicity $m>\deg(f)/(u+v)$.

\begin{notation}
	Given $f\in K[X,Y]$ and an irreducible polynomial $h\in \closure{K}[X,Y]$, $\mult_h(f)$ denotes the multiplicity of $h$ in $f$, that is, 
		the largest $m\in \NN$ (possibly~0) such that $h^m$ divides $f$ in $\closure{K}[X,Y]$. 
\end{notation}

\begin{defn}
	Let $g_0\in \QQ[X,Y]$ be a non-constant quasi-homogeneous polynomial.
	A prime $p$ is a \emph{good prime} associated with $g_0$ if the following hold: 
	\begin{enumerate}[(1)]
		\item there exists a reduction modulo~$p$ of $g_0$ in $\FF_p[X,Y]$, denoted by $g_p$; 
		\item the factorization of $g_p$ over $\closure{\FF}_p$ is similar to the factorization of $g_0$ over $\CC$, 
				in the sense that those factorizations have the same number of pairwise prime irreducible factors and the same multiplicities;
		\item $\mult_X(g_p)=\mult_X(g_0)$ and $\mult_Y(g_p)=\mult_Y(g_0)$.  
	\end{enumerate}
	If that is not the case, then $p$ is a \emph{bad prime}.
	Remark~\ref{rem: finiteness of bad primes QH} will show that there exist at most finitely many bad primes 
		associated with a fixed $g_0$.
\end{defn}

Our first and main goal in this section will be to extend Theorem~\ref{thm: p in denominator} to the quasi-homogeneous setting: 

\begin{thm}\label{thm: p in denominator QH}
	Let $g_0\in \QQ[X,Y]$ be a non-constant quasi-homogeneous polynomial. 
	Let $p$ be a good prime associated with $g_0$, and let $g_p$ be the image of $g_0$ in $\FF_p[X,Y]$.
	If $\fpt(g_p)\ne \lct(g_0)$, then the minimal denominator of $\fpt(g_p)$ is of the form $kp^e$, where $e\ge 1$ and 
		$k$ is a factor of one of the following\textup: $\mult_X(g_0)\cdot \deg X$, $\mult_Y(g_0)\cdot \deg Y$, or $\mult_h(g_0)$, 
		where $h$ is some irreducible factor \textup(over $\CC$\textup) of $g_0$ other than $X$ or $Y$.
\end{thm}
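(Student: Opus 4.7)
The plan is to reduce Theorem~\ref{thm: p in denominator QH} to the critical-point machinery of Sections~\ref{s:  Delta and Phi section}--\ref{s:  CP section} via a change of variables that turns the quasi-homogeneous polynomial $g_p$ into a standard-homogeneous form. Set $u = \deg X$ and $v = \deg Y$, and consider $G_p(x,y) \coloneqq g_p(x^u, y^v) \in \FF_p[x,y]$ together with the ideal $\idealb \coloneqq \ideal{x^u, y^v}$, which is generated by two relatively prime non-constant forms. A short verification (using the $\ZZ/u\ZZ \times \ZZ/v\ZZ$-grading on $\FF_p[x,y]$ to handle the nontrivial direction) shows that
\[
    g_p^k \in \idealm^{[q]} \iff G_p^k \in \idealb^{[q]}
\]
for all $k$ and $q$, and hence $\fpt(g_p) = \ft^\idealb(G_p)$. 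Because $g_p$ is quasi-homogeneous of quasi-degree $d$, the polynomial $G_p$ is standard-homogeneous of degree $d$.

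Next I would analyze how the factorization of $g_0$ over $\CC$ transfers to that of $G_p$ over $\closure{\FF}_p$. With $d' = \gcd(u, v)$, the irreducible factors of a quasi-homogeneous polynomial in two variables are of three types: $X$, $Y$, and polynomials $h_i = X^{v/d'} - \alpha_i Y^{u/d'}$ with $\alpha_i \in \CC^\times$ distinct. Under the substitution, each $h_i$ becomes $x^{uv/d'} - \alpha_i y^{uv/d'}$, which---provided $p \nmid uv/d'$, a condition folded into the good-prime hypothesis at the cost of excluding only finitely many extra primes (as in Lemma~\ref{lem: finiteness of bad primes})---splits over $\closure{\FF}_p$ into $uv/d'$ pairwise distinct linear forms. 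Therefore $G_p$ factors over $\closure{\FF}_p$ as $\ell_1^{A_1}\cdots\ell_n^{A_n}$, with pairwise prime linear forms $\ell_j$ and each multiplicity $A_j$ belonging to the set
\[ \{\mult_X(g_0)\cdot u,\ \mult_Y(g_0)\cdot v,\ \mult_{h_1}(g_0),\ldots,\mult_{h_r}(g_0)\}, \]
which is precisely the collection appearing in the conclusion of the theorem.

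Now Theorem~\ref{thm: CPs and the FT function} applies: with $\deg UV = u+v$, the $F$-threshold $\ft^\idealb(G_p)$ either equals $(u+v)/d$ (the trivial-region value) or is determined by a critical point $\vv{c} \in \QQ_{p^\infty}^n$ associated with $\ell$ and $\idealb$, via $\fpt(g_p) = c_j/A_j$ for some $j$. If such a critical point lies in $\QQ_{p^e}^n \setminus \NN^n$ with $e$ minimal, then writing $c_j = b_j/p^e$ with $\gcd(b_j, p) = 1$ gives $\fpt(g_p) = b_j/(p^e A_j)$, whose minimal denominator is of the form $k p^e$ with $k$ a divisor of $A_j$---exactly as claimed.

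What remains---and is the crux---is to show that whenever $\fpt(g_p) \ne \lct(g_0)$, the determining critical point must have at least one non-integer coordinate. This splits into two subclaims: (i) the trivial-region value $(u+v)/d$ coincides with $\lct(g_0)$ for non-degenerate $g_0$, which follows from the standard Newton-polyhedron formula for the log canonical threshold of a quasi-homogeneous singularity; and (ii) any integer critical point that realizes $\fpt(g_p)$ must do so at a value equal to $\lct(g_0)$, arising from a dominant factor in the degenerate case. Subclaim (ii) is the main obstacle: unlike for the maximal ideal, where the only integer critical points are the canonical basis vectors $\canvec_j$ (see Remark~\ref{rem: integral CPs w.r.t max ideal}), the ideal $\idealb = \ideal{x^u, y^v}$ admits a rich family of integer critical points, as the final example of Section~\ref{s:  fpt} illustrates. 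The argument should combine the minimality condition $\ell^\vv{c} \in \idealb$, $\ell^{\vv{c}-\canvec_i} \notin \idealb$ from Proposition~\ref{prop: characterization of CP}, the identity $\norm{\vv{c}} + \Dlb(\vv{c}) = u + v$ characterizing critical points, and the multiplicity bounds forced by non-degeneracy of $g_0$, to rule out any integer critical point that would drive $\fpt(g_p)$ strictly below $(u+v)/d$ without forcing $g_0$ to be degenerate with matching $\lct$.
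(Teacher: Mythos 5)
Your reduction is the same as the paper's: substitute $x^u,y^v$ for $X,Y$, pass to $G_p=g_p(x^u,y^v)$ and $\idealb=\ideal{x^u,y^v}$, and show $\fpt(g_p)=\ft^{\idealb}(G_p)$ (the paper does this via the direct-summand property of $\kk[x^u,y^v]\subseteq\kk[x,y]$, which amounts to your grading argument). But the proof is not complete, and the part you do assert contains a false step. You claim that if the determining critical point lies in $\QQ_{p^e}^n\setminus\NN^n$ with $e$ minimal, then $\fpt(g_p)=c_j/A_j=b_j/(p^eA_j)$ with $\gcd(b_j,p)=1$. This does not follow: the maximum $\max_j c_j/A_j$ may be achieved at a coordinate $c_j$ that is an integer even though other coordinates are not, in which case the denominator can be prime to $p$. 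The paper's closing example of Section~\ref{s: fpt} exhibits exactly this for an ideal $\idealb\ne\idealm$: the critical point $(2/5,3/5,4/5,1)$ in characteristic $5$ determines an $F$-threshold equal to $1/16$. So the genuine content of the theorem is not ``the determining critical point has a non-integer coordinate'' but rather a coordinate-by-coordinate control of that critical point, and your ``crux'' paragraph, which you leave open, misidentifies what must be proved. The paper's mechanism for this is absent from your sketch: a cyclic symmetry $y\mapsto\zeta y$ forces every critical point lying under a point of $\boundary\Trivial$ into the block form $\alpha\canvec_1+\beta\canvec_2+\sum_i\gamma_i\vv{f}_i$ (Lemmata~\ref{lem: cyclic permutations} and \ref{lem: CPs have blocks of identical coordinates}, using the uniqueness statement of Remark~\ref{rem: uniqueness of CP under point in boundary of trivial region}), and then a monomial-support argument (Lemma~\ref{lem: analysis of block CPs}) shows $\alpha/u,\beta/w\in\QQ_q$, so that $\alpha\in\NNpos$ forces $u\mid\alpha$ and hence $\vv{c}=u\canvec_1$, which is excluded in the non-degenerate case. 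Without something playing the role of these two lemmas, the argument cannot be completed.

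A secondary issue: you fold the condition $p\nmid uv/d'$ into the good-prime hypothesis. The paper's Definition of good prime does not exclude primes dividing $uv$, and the theorem is stated for all such primes; the paper handles $p\mid v$ by writing $v=w\bar q$ with $\bar q$ a power of $p$, extracting $\bar q$th roots of the $\mu_i$, and tracking the extra factor $\bar q$ in the multiplicities (it is absorbed into $p^e$ in the conclusion, via the refinement in Corollary~\ref{cor: analysis of block CPs} that $\beta\in\NNpos$ forces either $\vv{c}=v\canvec_2$ or $\bar q\nmid\beta$). Your version would prove a strictly weaker statement. Your subclaim~(i), identifying $\lct(g_0)$ with $(u+v)/\deg g_0$ in the non-degenerate case, is fine and can be obtained either from the Newton-polygon formula as you suggest or, as the paper does, from $\lim_{p\to\infty}\fpt(g_p)=\lct(g_0)$ together with the error bound of Remark~\ref{rem: truncation for FTs}.
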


\begin{rem}
	As it is the case with Theorem~\ref{thm: p in denominator} (see Remark~\ref{rem: weakening}),
		an alternate version of the above result, where, in the conclusion,
		multiplicities in $g_0$ are replaced with multiplicities in $g_p$,
		may be obtained under a looser notion of good prime, that 
		only requires that $g_0$ and $g_p$ be both non-degenerate or both degenerate, of the same degeneracy type. 
\end{rem}

Fix $g_0$ as in the statement of Theorem~\ref{thm: p in denominator QH} and a good prime $p$; let $\kk=\closure{\FF}_p$ and 
	$g=g_p$, the image of $g_0$ in $\kk[X,Y]$.
Since Theorem~\ref{thm: p in denominator QH} is already known for standard homogenous polynomials, we may assume that $g_0$  
	is not homogeneous---so $g_0$ is not a monomial, and thus neither is $g$, and  $u\ne v$. 
In fact we assume, by possibly changing the grading, that $u$ and $v$ are coprime. 
The following proposition will allow us to extend our methods to the quasi-homogeneous setting:

\begin{prop}\label{prop: relating H and QH cases}
	Let $\psi: \kk[X,Y]\rightarrow \kk[x,y]$ be the map $f(X,Y) \mapsto f(x^u,y^v)$.
	Set $G=\psi(g)$ and $\idealb=\ideal{x^u,y^v}$.
	Then $\fpt(g)=\ftb(G)$.
\end{prop}

\begin{proof}  
	Because of the description of the $F$-threshold of a polynomial given in Discussion~\ref{discussion:  FT description for polynomials}, 
		it suffices to show that, for each $a \in \NN$ and $q = p^e$, we have that $g^a \in \ideal{X,Y}^{[q]}$ 
		if and only if $G^a \in \idealb^{[q]}$.  
	Let $A=\kk[x^u,y^v]$ and $B=\kk[x,y]$.
	Let~$\ideala$ be the ideal of $A$ generated by $x^u$ and $y^v$.
	Then $\psi$ induces an isomorphism from $\kk[X,Y]$ to $A$, mapping $g$ to $G$ and $\idealm=\ideal{X,Y}$ to $\ideala$, 
		and hence, $g^a \in \idealm^{[q]}$ if and only if $G^a \in \ideala^{[q]}$.  
	But $A$ is a direct summand of  $B$ as an $A$-module, and therefore $\idealb^{[q]}\cap A=(\ideala^{[q]}B)\cap A=\ideala^{[q]}$.
	As $G\in A$,  we see that $G^a\in \ideala^{[q]}$ if and only if $G^a\in \idealb^{[q]}$, which allows us to conclude the proof.
\end{proof}

\begin{lem}\label{lem: QH factorization}
	The polynomial $g$ can be factored as
		\[g=\xi \cdot X^{j_1}Y^{j_2}\cdot \prod_{i=1}^{m}(X^v-\mu_i Y^u)^{k_i},\]
		for some  $j_1,j_2\in \NN$, $m,k_1,\ldots, k_m\in \NNpos$, 
		$\xi\in \kk^\times$,  and distinct $\mu_1,\ldots,\mu_m\in \kk^\times$.
\end{lem}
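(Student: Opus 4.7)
The plan is to reduce the factorization of $g$ to the factorization of a univariate polynomial over $\kk=\closure{\FF}_p$, exploiting the quasi-homogeneity of $g$ and the assumption that $\gcd(u,v)=1$.

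First I would peel off the pure powers of the coordinate variables: set $j_1=\mult_X(g)$, $j_2=\mult_Y(g)$, and write $g = X^{j_1}Y^{j_2}\,h(X,Y)$, where $h$ is quasi-homogeneous (of some degree $d'$) and is divisible by neither $X$ nor $Y$. The latter condition is equivalent to saying that $h$ has a nonzero term with $Y$-exponent $0$ (a pure power $X^a$) and a nonzero term with $X$-exponent $0$ (a pure power $Y^b$). The quasi-homogeneity then forces $au = bv = d'$; since $\gcd(u,v)=1$, we conclude $u \mid b$ and $v \mid a$, and in particular $d' = muv$ for some $m\in\NN$, with $a=mv$ and $b=mu$.

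Next, the same quasi-homogeneity constraint $\alpha u + \beta v = muv$ on any monomial $X^\alpha Y^\beta$ of $h$ combined with $\gcd(u,v)=1$ shows that $v\mid \alpha$ and $u\mid \beta$, so every monomial of $h$ has the form $X^{kv}Y^{(m-k)u}$ for some $0\le k\le m$. Thus $h(X,Y)=\sum_{k=0}^m c_k X^{kv}Y^{(m-k)u}$ with $c_0,c_m\in \kk^\times$. If $m=0$ there are no linear factors and the lemma holds with $\xi=c_0$; otherwise, let $P(T)=\sum_{k=0}^m c_k T^k\in \kk[T]$, a polynomial of degree $m$ with nonzero constant term. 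Because $\kk$ is algebraically closed, $P(T)=c_m\prod_{i=1}^m (T-\mu_i)$ with $\mu_i\in\kk^\times$ (nonzero since $P(0)=c_0\ne 0$).

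Finally I would substitute back: clearing $Y^{mu}$ gives
\[
h(X,Y)=Y^{mu}\,P(X^v/Y^u)=c_m\prod_{i=1}^m (X^v-\mu_i Y^u),
\]
so $g=c_m\cdot X^{j_1}Y^{j_2}\cdot\prod_{i=1}^m(X^v-\mu_i Y^u)$. Grouping together repeated factors and relabeling gives the asserted form with $\xi=c_m$. The only subtle step is the divisibility deduction $v\mid\alpha,\ u\mid\beta$ forcing every monomial of $h$ to live on the ``diagonal'' indexed by $k$; this is what collapses $h$ to a univariate polynomial in $X^v/Y^u$ and lets algebraic closedness of $\kk$ do the rest of the work.
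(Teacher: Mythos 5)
Your proof is correct and follows essentially the same route as the paper's: strip off $X^{j_1}Y^{j_2}$, use quasi-homogeneity and $\gcd(u,v)=1$ to show every monomial of the remaining factor $h$ has $X$-exponent divisible by $v$ and $Y$-exponent divisible by $u$, so that $h=H(X^v,Y^u)$ for a binary form $H$ (your univariate $P$), and then factor over $\closure{\FF}_p$. The only cosmetic difference is that your $m=0$ case never occurs here, since the surrounding setup assumes $g$ is not a monomial, which is what guarantees $m\in\NNpos$ as the statement requires.
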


\begin{rem}\label{rem: finiteness of bad primes QH}
	The polynomial $g_0$, of course, has a similar factorization over $\CC$, and the argument used in the proof of 
		Lemma~\ref{lem: finiteness of bad primes} can be adapted to show that there exist at most finitely
		many bad primes associated with $g_0$.
\end{rem}

\begin{proof}
	Write $g$ as $X^{j_1}Y^{j_2}h$, where $h$ is a quasi-homogeneous polynomial prime to $XY$.
	As $g$ is not a monomial, $h$ has at least terms $X^a$ and $Y^b$, with $au=bv$.
	Since $u$ and $v$ are coprime, $v\mid a$ and $u\mid b$.
	Now suppose $h$ also has a term $X^cY^d$.
	Then $cu+dv=au$, so $v\mid c-a$; but $v\mid a$ as well, so we conclude that $v\mid c$.
	Similarly we find that $u\mid d$, so in each term of $h$ the exponents of~$X$ and~$Y$ are divisible by $v$ and $u$, 
		respectively, and therefore $h=H(X^v,Y^u)$ for some form~$H$.
	The result is then obtained by factoring $H$ into linear forms.
\end{proof}

In the remainder of this section, we adopt the notation introduced in Proposition~\ref{prop: relating H and QH cases}.
In our proof of Theorem~\ref{thm: p in denominator QH}, we assume that $j_1j_2\ne 0$; our argument can be adapted to 
	handle the other cases, which are simpler.
As $u = \deg X$ and $v = \deg Y$ are coprime, $p$ can only divide one of them;
	we assume that $u$ is prime to $p$ and 
	write $v=w\bar{q}$, where $w$ is prime to $p$ and $\bar{q}$ is a power of $p$ (possibly~1).
Then, as $\kk = \closure{\kk}$, there exist unique $\mu_i^{1/\bar{q}} \in \kk$ with 
	$\bigl( \mu_i^{1/\bar{q}} \bigr)^{\bar{q}} = \mu_i$, so that 
	\begin{equation} 
	\label{eq: first factorization of G}
	G=\psi(g)=\xi \cdot  x^{uj_1}y^{vj_2}\cdot \prod_{i=1}^{m}\left(x^{uw}-\mu_i^{1/\bar{q}} y^{uw}\right)^{\bar{q}k_i},
	\end{equation}
	where the factors $x^{uw}-\mu_i^{1/\bar{q}} y^{uw}$ are square free and pairwise prime.
As our methods will depend on factoring $G$ into a product of linear forms, it will be necessary to factor each  
	$x^{uw}-\mu_i^{1/\bar{q}} y^{uw}$ into a product of linear forms.  
Let $\zeta$ be a primitive $(uw)$th root of unity in $\kk$, and let $\nu_i$ be a $(uw)$th root of 
	$\mu_i^{1/\bar{q}}$ in $\kk$, for each $i$; 
	then $x^{uw}-\mu_i^{1/\bar{q}} y^{uw}=\prod_{j=1}^{uw}(x-\nu_i\zeta^jy)$.  
Substituting this into~\eqref{eq: first factorization of G} produces the following factorization of $G$ into a product of linear forms:  		
	\begin{equation*}
		G= \xi \cdot  x^{uj_1}y^{vj_2}\cdot \prod_{i=1}^{m}  \prod_{j=1}^{uw}(x-\nu_i\zeta^jy)^{\bar{q} k_i}.
	\end{equation*}
Let $n=2+muw$, the number of pairwise prime linear factors of $G$.
It will be convenient to label the $n$ linear factors of $G$ and the canonical basis vectors of $\RR^n$
	in a non-standard way.
Let $\ell_1=x$, $\ell_2=y$, and for each $i\in\{1,\ldots, m\}$ and $j\in \{1,\ldots,uw\}$ let $\ell_{i,j}=x-\nu_i\zeta^jy$; set  
	\[\ell=(\ell_1,\ell_2,\ell_{1,1},\ldots,\ell_{1,uw},\ldots,\ell_{m,1},\ldots,\ell_{m,uw}).\]
For each $\ell_{i,j}$, let $\vv{f}_{i,j}$ be the corresponding  canonical basis vector of $\RR^n$, and 
	set $\vv{f}_i=\sum_{j=1}^{uw}\vv{f}_{i,j}$, so that $\ell^{\vv{f}_i}= \prod_{j=1}^{uw}(x-\nu_i\zeta^jy) = x^{uw}-\mu_i^{1/\bar{q}} y^{uw}$, by definition.
The canonical basis vectors corresponding to $\ell_1$ and $\ell_2$ will be denoted by the usual $\canvec_1$ and $\canvec_2$.

We now explore some symmetries in the critical points associated with $\ell$ and $\idealb=\ideal{x^u,y^v}$ coming from the special shape of $G$.

\begin{lem}\label{lem: cyclic permutations}
	Suppose $\vv{c}=c_1\canvec_1+c_2\canvec_2+\sum_{i,j}c_{i,j}\vv{f}_{i,j}$ is a critical point.
	Let $\vv{c}'$ be the point obtained from $\vv{c}$ by replacing each set of coordinates $c_{i,1},c_{i,2},\ldots,c_{i,uw}$ 
		with $c_{i,uw},c_{i,1},\ldots,c_{i,uw-1}$.
	Then $\vv{c}'$ is also a critical point. 
\end{lem}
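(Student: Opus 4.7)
The plan is to exhibit a $\kk$-algebra automorphism $\phi$ of $\kk[x,y]$ that realizes the prescribed cyclic permutation at the level of the linear forms $\ell_{i,\bullet}$ while preserving the ideal $\idealb=\ideal{x^u,y^v}$. The natural candidate is $\phi$ defined by $\phi(x)=x$ and $\phi(y)=\zeta y$, where $\zeta$ is the primitive $(uw)$th root of unity already fixed in the text. Then $\phi(\ell_{i,j})=x-\nu_i\zeta^{j+1}y=\ell_{i,j+1}$ (indices read mod $uw$), while $\phi(\ell_1)=\ell_1$ and $\phi(\ell_2)=\zeta\ell_2$. Since $\phi(x^u)=x^u$ and $\phi(y^v)=\zeta^v y^v$, we have $\phi(\idealb)=\idealb$, and consequently $\phi(\idealb^{[q]})=\idealb^{[q]}$ for every power $q$ of $p$. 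Note that $\zeta$ does exist in $\kk=\closure{\FF}_p$ because $uw$ is coprime to $p$: this is exactly the reason for having isolated the $p$-part $\bar q$ of $v$ at the outset.

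Next, write $\vv{c}=\vv{a}/q$ and let $\vv{a}'$ be the vector obtained from $\vv{a}$ by the same cyclic permutation as in the statement, so that $a'_1=a_1$, $a'_2=a_2$, and $a'_{i,j}=a_{i,j-1}$; then $\vv{c}'=\vv{a}'/q$. A direct computation, using $\phi(\ell_{i,j})=\ell_{i,j+1}$, gives
\[\phi(\ell^{\vv{a}})=\zeta^{a_2}\cdot\ell^{\vv{a}'},\]
and the same computation with any one exponent decremented yields $\phi(\ell^{\vv{a}-\vv{d}})=\zeta^{a_2}\cdot\ell^{\vv{a}'-\vv{d}'}$, where $\vv{d}\mapsto\vv{d}'$ is the induced permutation of basis vectors ($\canvec_1\mapsto\canvec_1$, $\canvec_2\mapsto\canvec_2$, $\vv{f}_{i,j}\mapsto\vv{f}_{i,j+1}$). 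Because $\zeta^{a_2}$ is a unit and $\idealb^{[q]}$ is $\phi$-invariant, membership in $\idealb^{[q]}$ is preserved in each of these identities.

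With these tools in hand, the proof is concluded by invoking Proposition~\ref{prop: characterization of CP}\iref{item: second characterization of CP}: the hypothesis that $\vv{c}$ is a critical point translates to $\ell^{\vv{a}}\in\idealb^{[q]}$ together with $\ell^{\vv{a}-\vv{d}}\notin\idealb^{[q]}$ for every basis direction $\vv{d}$ whose coordinate in $\vv{a}$ is positive. Transporting each such condition across $\phi$ produces the analogous condition for $\vv{a}'$, since the correspondence $\vv{d}\mapsto\vv{d}'$ matches the support structure ($a'_{i,j+1}>0\Leftrightarrow a_{i,j}>0$, and $a'_k=a_k$ for $k=1,2$). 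Hence $\vv{c}'$ satisfies the same characterization and is itself a critical point. I do not anticipate any genuine obstacle here; the only step worth verifying carefully is the preservation of $\idealb^{[q]}$ under $\phi$, which, as above, reduces to the observation that $\phi$ scales $x^u$ and $y^v$ by units.
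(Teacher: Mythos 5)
Your proof is correct and is essentially the paper's own argument: the paper likewise applies the automorphism $x\mapsto x$, $y\mapsto\zeta y$, observes that it sends $\ell^{q\vv{c}}$ to a unit multiple of $\ell^{q\vv{c}'}$ while fixing $\idealb$, and concludes via the characterization of critical points in Proposition~\ref{prop: characterization of CP}\iref{item: second characterization of CP}. You have merely written out the verifications (invariance of $\idealb^{[q]}$, the matching of supports under the induced permutation) that the paper leaves implicit.
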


\begin{proof}
	Suppose $\vv{c}\in \QQ_{q}^n$.
	Then the $\kk$-automorphism of $\kk[x,y]$ that maps $x\mapsto x$ and $y\mapsto \zeta y$ transforms  
		$\ell^{q\vv{c}}$ into a constant multiple of $\ell^{q\vv{c}'}$, while fixing $\idealb$.	
\end{proof}

\begin{lem}\label{lem: CPs have blocks of identical coordinates}
	Let $\vv{t}=t_1\canvec_1+t_2\canvec_2+\sum_{i=1}^m t_i^*\vv{f}_i\in\boundary \Trivial$.
	Suppose $\vv{c}$ is a critical point such that $\vv{c}<\vv{t}$.
	Then $\vv{c}=c_1\canvec_1+c_2\canvec_2+\sum_{i=1}^m c_i^*\vv{f}_i$, for some $c_i,c_i^*\in\QQ_{p^\infty}$.
\end{lem}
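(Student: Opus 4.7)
The plan is to combine the cyclic-symmetry result of Lemma~\ref{lem: cyclic permutations} with the uniqueness principle of Remark~\ref{rem: uniqueness of CP under point in boundary of trivial region} in a rigidity-type argument. Write $\vv{c} = c_1 \canvec_1 + c_2 \canvec_2 + \sum_{i,j} c_{i,j} \vv{f}_{i,j}$ and let $\vv{c}'$ be the critical point obtained from $\vv{c}$ by the cyclic shift within each $i$-block $c_{i,1}, \ldots, c_{i,uw} \mapsto c_{i,uw}, c_{i,1}, \ldots, c_{i,uw-1}$, as provided by Lemma~\ref{lem: cyclic permutations}.

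The key observation is that $\vv{c}' < \vv{t}$ still holds (strictly, in every coordinate). Indeed, the hypothesis that $\vv{t}$ has the form $t_1\canvec_1 + t_2\canvec_2 + \sum_i t_i^* \vv{f}_i$ means that the $\vv{t}$-coordinate in every slot $(i,j)$ equals $t_i^*$; since $\vv{c} < \vv{t}$ gives $c_{i,j} < t_i^*$ for all $i,j$, any permutation of the entries within each block preserves the inequality $c'_{i,j} < t_i^* = t_{i,j}$, while the $\canvec_1$- and $\canvec_2$-components are unchanged.

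Now $\vv{c}$ and $\vv{c}'$ are two critical points, each strictly less than $\vv{t} \in \boundary \Trivial$. Remark~\ref{rem: uniqueness of CP under point in boundary of trivial region} asserts that two distinct critical points both lying below a boundary point $\vv{u}$ of the trivial region must each fail to be strictly less than $\vv{u}$ in some coordinate. Applying this with $\vv{u} = \vv{t}$ forces $\vv{c} = \vv{c}'$. But equality between $\vv{c}$ and its cyclic shift within every block means $c_{i,1} = c_{i,2} = \cdots = c_{i,uw}$ for each $i$. Setting $c_i^*$ equal to this common value (and relabeling $c_1, c_2$ for the $\canvec_1, \canvec_2$ components) yields the claimed decomposition $\vv{c} = c_1\canvec_1 + c_2\canvec_2 + \sum_{i=1}^m c_i^* \vv{f}_i$, with each $c_i^* \in \QQ_{p^\infty}$ since all coordinates of a critical point lie in $\QQ_{p^\infty}$ by Corollary~\ref{cor: local maxima have coords in ZZ[1/p]}.

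I do not anticipate a substantive obstacle: the argument is a clean two-line consequence of the cited results, with the only subtle point being that the strictness of $\vv{c} < \vv{t}$ is preserved by the cyclic permutation precisely because of the block-constancy of $\vv{t}$. This is exactly what allows Remark~\ref{rem: uniqueness of CP under point in boundary of trivial region} to apply and force the rigidity.
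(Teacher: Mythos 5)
Your proof is correct and follows essentially the same route as the paper's: construct the cyclic shift $\vv{c}'$ via Lemma~\ref{lem: cyclic permutations}, note that the block-constancy of $\vv{t}$ preserves the strict inequality $\vv{c}'<\vv{t}$, and invoke the uniqueness statement of Remark~\ref{rem: uniqueness of CP under point in boundary of trivial region} to force $\vv{c}=\vv{c}'$, hence block-constant coordinates. Your explicit remark that a single application of the shift equality already equalizes all coordinates within each block is a slight streamlining of the paper's ``iterating this process,'' but the argument is the same.
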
	

\begin{proof}
	Write $\vv{c}=c_1\canvec_1+c_2\canvec_2+\sum_{i,j}c_{i,j}\vv{f}_{i,j}$, and 
		construct $\vv{c}'$ as in Lemma~\ref{lem: cyclic permutations}.
	Then $\vv{c}'$ is a critical point and $\vv{c}'<\vv{t}$ as well.
	Since there can be no more than one critical point lying under a point in $\boundary \Trivial$ 
		(see Remark~\ref{rem: uniqueness of CP under point in boundary of trivial region}), 
		we conclude that $\vv{c}=\vv{c}'$.
	Iterating this process, we see that for each $i\in\{1,\ldots,m\}$ the coordinates $c_{i,1},\ldots,c_{i,uw}$ of $\vv{c}$ are all equal, 
		and the result follows.
\end{proof}

\begin{lem}\label{lem: analysis of block CPs}
	Suppose $\vv{c}=\alpha\canvec_1+\beta\canvec_2+\sum_{i=1}^m \gamma_i\vv{f}_i\in \QQ_q^n$ is a critical point.
	Then $\alpha/u$ and $\beta/w$  both lie in $\QQ_q$.
\end{lem}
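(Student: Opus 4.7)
The plan is to use the critical-point characterization in Proposition~\ref{prop: characterization of CP}(4) together with a direct monomial analysis of $M \coloneqq \ell^{q\vv{c}}$. Writing $a=q\alpha$, $b=q\beta$, and $c_i=q\gamma_i$ (all nonnegative integers) and invoking the block structure supplied by Lemma~\ref{lem: CPs have blocks of identical coordinates}, one has
\[
M = x^a y^b \prod_{i=1}^m \bigl(x^{uw} - \mu_i^{1/\bar{q}}\, y^{uw}\bigr)^{c_i}.
\]
Expanding each factor binomially and collecting terms by the total $y$-exponent index $J = \sum_i j_i$, the polynomial $M$ is a $\kk$-linear combination of the pairwise distinct monomials $x^{a+uw(C-J)} y^{b+uwJ}$ for $J\in\{0,1,\ldots,C\}$, where $C=\sum_i c_i$, with some coefficient $S(J)\in\kk$.

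The key step is to show $u\mid a$ (assuming $\alpha>0$; otherwise $\alpha/u=0\in\QQ_q$). By Proposition~\ref{prop: characterization of CP}(4), $M\in\idealb^{[q]}=\ideal{x^{uq},y^{vq}}$, while $M/x\notin\idealb^{[q]}$. Because $\ideal{x^{uq},y^{vq}}$ is a monomial ideal, the non-containment of $M/x$ produces some $J$ with $S(J)\neq 0$ such that $a-1+uw(C-J)<uq$ and $b+uwJ<vq$. For this same $J$, the corresponding monomial of $M$ has $y$-exponent $b+uwJ<vq$ and nonzero coefficient, so membership of $M$ in the ideal forces $a+uw(C-J)\geq uq$. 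The two integer inequalities together pin down $a+uw(C-J)=uq$, hence $a=u\bigl(q-w(C-J)\bigr)$, so $u\mid a$ and $\alpha/u=(a/u)/q\in\QQ_q$.

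The argument for $\beta/w$ is entirely parallel, swapping the roles of $\canvec_1$ and $\canvec_2$ (assuming $\beta>0$): the non-containment of $M/y$ now yields a $J$ with $S(J)\neq 0$ and $a+uw(C-J)<uq$ and $b-1+uwJ<vq$, whereas membership of $M$ then forces $b+uwJ\geq vq$, giving $b+uwJ=vq$. Since $v=w\bar{q}$, this reads $b=w(\bar{q}q-uJ)$, so $w\mid b$ and $\beta/w=(b/w)/q\in\QQ_q$.

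I do not expect a genuine obstacle. The only nuance to monitor is that several multinomial terms can collapse to the same $J$, so $S(J)$ may in principle vanish for some indices; however, the critical-point condition is exactly what guarantees the existence of a single $J$ with $S(J)\neq 0$ for which both the ``inside'' and ``outside'' inequalities hold simultaneously, and the proof hinges entirely on that one $J$.
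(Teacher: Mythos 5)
Your proof is correct and follows essentially the same route as the paper's: both arguments use the critical-point characterization to get $\ell^{q\vv{c}}\in\ideal{x^{uq},y^{vq}}$ while $\ell^{q\vv{c}}/x\notin\ideal{x^{uq},y^{vq}}$, pick a monomial of the support witnessing the non-containment, and play the two exponent inequalities against each other to force $a+uw(C-J)=uq$ (and symmetrically $b+uwJ=vq$), whence $u\mid a$ and $w\mid b$. Your explicit bookkeeping of the coefficients $S(J)$ and the possible collapsing of multinomial terms is a harmless elaboration of what the paper states more tersely.
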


\begin{proof}
	Suppose $\alpha\ne 0$.
	Write $\alpha=a/q$, $\beta=b/q$, and $\gamma_i=c_i/q$. 
	As $\vv{c}\in \Upper$,  
		\begin{equation}\label{eq: c is in upper region}
			\ell^{q\vv{c}}=x^ay^b\cdot \prod_{i=1}^m\left(x^{uw}-\mu_i^{1/\bar{q}}y^{uw}\right)^{c_i}\in \idealb^{[q]}=\ideal{x^{uq},y^{vq}}.
		\end{equation}
	Since $\vv{c}$ is a critical point, there is a monomial $M=x^{a+iuw}y^{b+juw}$ in the support of $\ell^{q\vv{c}}$ such that 
		$M/x\notin \idealb^{[q]}$.
	Clearly  $a+iuw\le uq$, and 
		if the inequality were strict, then~\eqref{eq: c is in upper region} would imply that $b+juw\ge vq$, and 
		$M/x$ would be in $\idealb^{[q]}$, a contradiction.
	So $a+iuw=uq$, whence $\alpha/u=a/(uq)=1-iw/q\in \QQ_q$.
	The argument showing that $\beta/w\in \QQ_q$ is analogous. 
\end{proof}

\begin{cor}\label{cor: analysis of block CPs}
	Suppose $\vv{c}=\alpha\canvec_1+\beta\canvec_2+\sum_{i=1}^m \gamma_i\vv{f}_i$ is a critical point.
	\begin{enumerate}[(1)]
		\item If $\alpha$ is a positive integer, then $\vv{c}=u\canvec_1$.
		\item If $\beta$ is a positive integer, then either $\vv{c}=v\canvec_2$ or $\bar{q}\nmid \beta$.
	\end{enumerate}
\end{cor}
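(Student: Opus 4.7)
The plan is to extract from the proof of Lemma \ref{lem: analysis of block CPs} more than it states, and combine this with the minimality of critical points (any two critical points comparable under $\le$ must coincide). A key preliminary observation is that $u\canvec_1$ and $v\canvec_2$ are themselves critical points: $x^u, y^v \in \idealb$ while $x^{u-1}, y^{v-1} \notin \idealb$, so Proposition \ref{prop: characterization of CP}\iref{item: second characterization of CP} applies. Thus, proving (1) reduces to showing $\alpha = u$, because then $\vv{c} \ge u\canvec_1$ and minimality gives $\vv{c} = u\canvec_1$; likewise, for the non-trivial case of (2) it suffices to prove $\beta = v$.

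For (1), writing $\vv{c} \in \QQ_q^n$, the argument inside the proof of Lemma \ref{lem: analysis of block CPs} actually produces a nonnegative integer $i$ with $iw \le q$ and $\alpha = u(1 - iw/q)$. Integrality of $\alpha$ forces $q \mid iuw$; since $u$ and $w$ are both prime to $p$ (hence to $q$), this gives $q \mid i$. The bound $iw \le q$ and the positivity of $\alpha$ (which rules out $i = q$ in the extreme case $w = 1$) then force $i = 0$, hence $\alpha = u$.

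Part (2) I would attack by contrapositive: assume $\beta \in \NNpos$ is divisible by $\bar{q}$ and deduce $\vv{c} = v\canvec_2$. The symmetric analysis produces a nonnegative integer $j$ with $juw \le vq$ and $\beta = w\bar{q} - juw/q$. The assumption $\bar{q} \mid \beta$ together with $\bar{q} \mid w\bar{q}$ gives $q\bar{q} \mid juw$; coprimality of $uw$ with $p$ then upgrades this to $q\bar{q} \mid j$, and the bound on $j$ (with $\beta > 0$ ruling out $j = q\bar{q}$ in the edge case $u = 1$) forces $j = 0$, so $\beta = v$. The main delicacy throughout is just tracking coprimalities and handling the endpoint cases ($w = 1$, respectively $u = 1$); there is no deeper obstacle, as the Lemma already does the heavy lifting.
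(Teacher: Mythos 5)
Your proof is correct and follows essentially the same route as the paper: both rest on Lemma~\ref{lem: analysis of block CPs}, the coprimality of $u$ and $w$ with $p$, the observation that $u\canvec_1$ and $v\canvec_2$ are themselves critical points, and the minimality of critical points under $\le$. The only difference is that you reopen the Lemma's proof to extract $\alpha=u$ (resp.\ $\beta=v$) exactly, whereas the paper obtains $u\mid\alpha$ (resp.\ $v\mid\beta$) directly from the Lemma's statement---$\alpha/u\in\QQ_q$ together with $\alpha\in\ZZ$ and $\gcd(u,q)=1$ forces $u\mid\alpha$---which shortens the argument slightly.
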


\begin{proof}
	If $\alpha\in \NNpos$, then, bearing in mind that $u$ is prime to $p$, Lemma~\ref{lem: analysis of block CPs} shows that $u\mid \alpha$.
	Thus, $\vv{c}\ge u\canvec_1$, and since $u\canvec_1$ is a critical point we must have $\vv{c}=u\canvec_1$. 	
	Likewise, if $\beta\in \NNpos$,  then $w\mid \beta$.
	If $\bar{q}\mid \beta$ as well, then $v=w\bar{q}\mid \beta$, and arguing as before we conclude that $\vv{c}=v\canvec_2$. 
\end{proof}

We are now ready to conclude the proof of Theorem~\ref{thm: p in denominator QH}. 

\begin{proof}[Proof of Theorem~\ref{thm: p in denominator QH}]
	Recall that $\fpt(g)=\ftb(G)$, by Proposition~\ref{prop: relating H and QH cases}.  
	Set $\lambda=(u+v)/\deg G$ and write $G=\xi\ell^\vv{a}$, where 
		$\vv{a}=uj_1\canvec_1+vj_2\canvec_2+\sum_{i=1}^m \bar{q}k_i \vv{f}_i$. 
	Consider the following cases: 	
	\begin{itemize}[wide=0pt]
		\item	\textbf{$\bm{\lambda k_i> 1}$ for some $\bm i$.}
		This situation is not common---$\lambda k_i> 1$ is equivalent to $\deg G<(u+v)k_i$ and, as $\deg G\ge uvk_i$, this requires
			either $u$ or $v$ to be 1. 
		In this case, $\bar{q}\vv{f}_i$ is a critical point lying under $\lambda \vv{a}$, so
			$\fpt(g)= \max\bigl\{0,\frac{\bar{q}}{\bar{q}k_i}\bigr\} =\frac{1}{k_i}$.
		\item \textbf{$\bm{\lambda j_i> 1}$ for some $\bm i$.}
		Suppose $\lambda j_1> 1$ (the other case is analogous). 
		Then $\lambda \vv{a}$ lies above the critical point $u\canvec_1$, so 
			$\fpt(g)= \max\bigl\{ \frac{u}{u j_1}, 0\bigr\} = \frac{1}{j_1}$.
		\item	\textbf{$\bm{\lambda j_i\le 1}$ and $\bm{\lambda k_i\le 1}$, for each $\bm i$.}
		If $\ftb(G)$ is determined by a critical point $\vv{c}$, then $\vv{c}<\lambda \vv{a}$, 
			so $\vv{c}=\alpha\canvec_1+\beta\canvec_2+\sum_{i=1}^m \gamma_i\vv{f}_i$, for some $\alpha,\beta,\gamma_i\in\QQ_{p^\infty}$, 
				by Lemma~\ref{lem: CPs have blocks of identical coordinates}.
		The inequalities $\lambda j_i\le 1$ ($i=1,2$) ensure that $\vv{c}$ is neither $u\canvec_1$ nor $v\canvec_2$, and thus 
			Corollary~\ref{cor: analysis of block CPs} shows that $\alpha$ is not a positive integer, and that if $\beta$ is a positive integer, then 
			$\bar{q}\nmid \beta$.
		The inequalities $\lambda k_i\le 1$ ($i=1,\ldots,m$), on the other hand, ensure that $\gamma_i<\lambda \bar{q}k_i\le \bar{q}$. 
		Being determined by $\vv{c}$, $\fpt(g)=\ftb(G)$ equals the maximum among 
			$\alpha/(uj_1)$, $\beta/(\bar{q}wj_2)$, and $\gamma_i/(\bar{q}k_i)$ 
			($i=1,\ldots,m$), hence its minimal denominator has the desired form.
		Alternatively, $\ftb(G)$ may be determined by the trivial region, and  $\fpt(g)=\ftb(G)=\lambda$.
	\end{itemize}
	
	We now allow $p$ to vary, to find $\lct(g_0)$ in each of the above cases.
	Note that the conditions ``$\lambda j_i>1$'' and ``$\lambda k_i>1$'' are equivalent to degeneracy conditions 
		on $g_p$, which are inherited from $g_0$, and thus independent of the choice of the good prime~$p$.
	It follows that $\lct(g_0)=\fpt(g_p)$ in the first two cases.
	As for the last case, note that if $p\nmid v$ then $\alpha$ and $\beta$ cannot be nonzero integers, by Corollary~\ref{cor: analysis of block CPs}, 
		and $\gamma_i<\bar{q}=1$, so $\vv{c}\notin \NN^n$.
	Thus,  $\fpt(g_p)$ is either $\lambda$ or is determined by a non-integral critical point, for all $p\gg 0$,
		and  the inequalities in Remark~\ref{rem: truncation for FTs} show that $\lct(g_0)=\lim_{p\to \infty}\fpt(g_p)=\lambda$. 
	So we have shown 
		that the minimal denominator of $\fpt(g_p)$ has the required form whenever $\fpt(g_p)\ne \lct(g_0)$.
\end{proof}

A byproduct of our proof is the following:

\begin{cor}
	If $g_0$ is non-degenerate, then $\lct(g_0)=(u+v)/\deg(g_0)$.
	If $g_0$ is degenerate, of degeneracy type $m$, then $\lct(g_0)=1/m$. 
	\qed
\end{cor}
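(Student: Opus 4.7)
The plan is to harvest this statement directly from the three-case analysis carried out in the proof of Theorem~\ref{thm: p in denominator QH}. First, I would translate the degeneracy dichotomy into the language of that proof. Recall $\lambda = (u+v)/\deg(g_0)$. With $g_0$ (and hence $g_p$, for good~$p$) factoring as $\xi X^{j_1} Y^{j_2} \prod_i (X^v - \mu_i Y^u)^{k_i}$, the irreducible factors $X$, $Y$, and $X^v - \mu_i Y^u$ have degrees $u$, $v$, and $uv$ and occur with multiplicities $j_1$, $j_2$, and $k_i$. By definition, degeneracy amounts to some multiplicity exceeding $\deg(g_0)/(u+v)$, i.e.\ to one of the strict inequalities $\lambda j_1 > 1$, $\lambda j_2 > 1$, or $\lambda k_i > 1$ holding. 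A brief check (comparing total degrees: summing the contributions of any two ``degenerate'' factors already exceeds~$\deg(g_0)$) shows that at most one such inequality can hold, so the degeneracy type $m$, when defined, is unambiguously equal to whichever of $j_1, j_2, k_i$ exceeds~$\deg(g_0)/(u+v)$.

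In the non-degenerate case, all $\lambda j_i \le 1$ and $\lambda k_i \le 1$, which is exactly the third bullet of the case analysis. That bullet shows that for each good prime $p$ either $\fpt(g_p) = \lambda$ or $\fpt(g_p)$ is determined by a critical point with no positive integer coordinates, necessarily of the form $\vv{a}/q$ with $q>1$ and some coordinate prime to~$p$. Remark~\ref{rem: truncation for FTs} then bounds $\lambda - \fpt(g_p)$ by $(n-2)/(q\deg G)$, which tends to~$0$ as $p \to \infty$ (note that $n$ depends on $p$ only through $\bar q$, and in any case $n/q \to 0$). Since $\fpt(g_p) \to \lct(g_0)$, passing to the limit gives $\lct(g_0) = \lambda = (u+v)/\deg(g_0)$.

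In the degenerate case of type $m$, the first two bullets of the case analysis apply: if $\lambda k_i > 1$ then $\fpt(g_p) = 1/k_i = 1/m$, and if $\lambda j_i > 1$ then $\fpt(g_p) = 1/j_i = 1/m$. These computed values depend only on the multiplicities of the factors of $g_0$, not on $p$ (among good primes, by the definition of good prime). Taking the limit $p \to \infty$ yields $\lct(g_0) = 1/m$.

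I do not expect any serious obstacle here: essentially all the work has already been done inside the proof of Theorem~\ref{thm: p in denominator QH}, and the only ingredient beyond bookkeeping is the observation that the bound in Remark~\ref{rem: truncation for FTs} really does shrink to zero in the limit, which uses only that the critical point controlling $\fpt(g_p)$ lies in $\QQ_{p^e}^n \setminus \NN^n$ with $e \ge 1$.
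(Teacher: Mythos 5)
Your proposal is correct and follows essentially the same route as the paper: the corollary is stated there as a byproduct of the three-case analysis in the proof of Theorem~\ref{thm: p in denominator QH}, where the degenerate cases give $\fpt(g_p)=1/m$ independently of the good prime $p$, and the non-degenerate case gives $\fpt(g_p)$ either equal to $\lambda$ or determined by a non-integral critical point, so that Remark~\ref{rem: truncation for FTs} forces $\lim_{p\to\infty}\fpt(g_p)=\lambda$. Your added observations (uniqueness of the degeneracy type, and that the bound $(n-2)/(q\deg G)$ shrinks to zero) are exactly the bookkeeping the paper leaves implicit.
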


The following theorem was recently proved by N\'u\~nez-Betancourt, Witt, Zhang, and the first author.

\begin{thm}[{\cite[Theorem~4.4]{hernandez+others.fpt_quasi-homog.polys}}]\label{thm: FPTs of quasi-homogeneous polynomials}
	Let $g\in \kk[X,Y]$ be a non-constant quasi-homogeneous polynomial that is square free over~$\closure{\kk}$ .
	Set $\lambda=(u+v)/\deg g$. 
	Then either $\fpt(g)=\min\{1,\lambda\}$ or $\fpt(g)=\tr{\lambda}{e}$, for some $e\ge 1$.
\end{thm}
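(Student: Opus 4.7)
Using Proposition~\ref{prop: relating H and QH cases}, I would work with $\ftb(G)$, where $G=\psi(g)$ and $\idealb=\ideal{x^u,y^v}$, so that $\lambda=\deg UV/\norm{\vv a}$ for $G=\ell^{\vv a}$ in the factorization preceding Lemma~\ref{lem: cyclic permutations}. Square freeness of $g$ forces $k_i=1$ and $j_1,j_2\in\{0,1\}$ in Lemma~\ref{lem: QH factorization}, so the nonzero entries of $\vv a$ are drawn from $\{u,\,v=w\bar q,\,\bar q\}$. I would then follow the three-case split used in the proof of Theorem~\ref{thm: p in denominator QH}.

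\textbf{Degenerate cases.} If $\lambda j_i>1$ for some $i\in\{1,2\}$, the integer critical point $u\canvec_1$ or $v\canvec_2$ lies under $\lambda\vv a$ and gives $\ftb(G)=1/j_i=1$; since $\lambda j_i>1$ with $j_i\in\{0,1\}$ forces $j_i=1$ and $\lambda>1$, we land on $\fpt(g)=1=\min\{1,\lambda\}$. Likewise, if $\lambda k_i>1$ for some $i$, then $\bar q\vv f_i$ is an integer critical point under $\lambda\vv a$, yielding $\ftb(G)=1/k_i=1$ and $\fpt(g)=\min\{1,\lambda\}$ again. These two cases are exactly the first two cases of the proof of Theorem~\ref{thm: p in denominator QH}.

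\textbf{Main case.} Otherwise $\lambda j_i\le 1$ and $\lambda k_i\le 1$ for every $i$. Theorem~\ref{thm: CPs and the FT function} then leaves two possibilities: either $\ftb(G)=\lambda$, in which case $\fpt(g)\le 1$ forces $\lambda\le 1$ and we are done with $\fpt(g)=\lambda=\min\{1,\lambda\}$; or $\ftb(G)=\max\{c_i/a_i\}$ for a critical point $\vv c<\lambda\vv a$. In the latter sub-case, the inequalities above exclude $u\canvec_1$, $v\canvec_2$, and each $\bar q\vv f_i$ from lying under $\lambda\vv a$, so $\vv c\notin\NN^n$. Let $q_0=p^{e_0}$ be minimal with $q_0\vv c\in\NN^n$. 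Since $\lambda\vv a\in\boundary\Trivial$, Lemma~\ref{lem: CPs have blocks of identical coordinates} yields a block decomposition $\vv c=\alpha\canvec_1+\beta\canvec_2+\sum_i\gamma_i\vv f_i$, and Lemma~\ref{lem: analysis of block CPs} gives $\alpha/u,\beta/w\in\QQ_{q_0}$. Consequently each quotient $\alpha/u$, $\beta/(w\bar q)$, $\gamma_i/\bar q$ appearing in $\ftb(G)=\max\{c_i/a_i\}$ lies in $\QQ_q$ for $q=q_0\bar q=p^e$.

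\textbf{Truncation identification and obstacle.} To finish, I would combine the non-integral-critical-point estimate of Remark~\ref{rem: bound on distance between FT and "LCT"} (powered by SG~\ref{sg: upper bound}) with the degree bound $\deg G\ge \bar q muw=muv$ coming from the non-monomial factors of $G$, obtaining
\[0<\lambda-\ftb(G)\le\frac{n-2}{q_0\norm{\vv a}}=\frac{muw}{q_0\deg G}\le\frac{1}{q_0\bar q}=\frac{1}{p^e}.\]
Remark~\ref{rem: facts about p expansions} then identifies $\ftb(G)$ uniquely as $\tr{\lambda}{e}$. The technical heart of the argument is the denominator accounting: the factor $\bar q$ introduced in the denominators of $\beta/(w\bar q)$ and $\gamma_i/\bar q$ must coincide \emph{exactly} with the factor $\bar q$ gained in the degree estimate, so that $\ftb(G)$ lives in $\QQ_{p^e}$ inside the interval $[\lambda-1/p^e,\lambda)$ characterizing $\tr{\lambda}{e}$. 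This matching is forced by the block symmetries of Lemmas~\ref{lem: cyclic permutations} and~\ref{lem: CPs have blocks of identical coordinates}, and is the reason the theorem takes its clean form.
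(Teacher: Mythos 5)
Your overall strategy --- reducing to $\ftb(G)$ via $\psi$ and Proposition~\ref{prop: relating H and QH cases}, splitting into the degenerate cases and the main case, invoking the block structure of Lemmas~\ref{lem: cyclic permutations}, \ref{lem: CPs have blocks of identical coordinates}, and~\ref{lem: analysis of block CPs}, and finishing with the truncation identification of Remark~\ref{rem: truncation for FTs} --- is exactly the paper's. In the case $p\nmid uv$ (so that $\bar{q}=1$ after normalizing), your argument is correct and coincides with the proof given at the end of Section~\ref{s: QH case}. But note that the paper only \emph{proves} the theorem under that extra hypothesis; you attempt to run the argument for general $\bar{q}$, and there the step ``$\vv{c}\notin\NN^n$'' is a genuine gap.

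Ruling out $u\canvec_1$, $v\canvec_2$, and the $\bar{q}\vv{f}_i$ does not rule out all integral critical points below $\lambda\vv{a}$. Corollary~\ref{cor: analysis of block CPs}(2) explicitly leaves open the possibility that $\beta$ is a positive integer with $w\mid\beta$ but $\bar{q}\nmid\beta$, and nothing in the paper controls integral values of the $\gamma_i$ in the range $(0,\bar{q})$. Such points genuinely occur when $\bar{q}>1$: for $u=w=1$, $v=\bar{q}=p$, $\idealb=\ideal{x,y^p}$, and $\ell=(x,y,x-\nu y)$, the point $\canvec_2+(p-1)\vv{f}_1=(0,1,p-1)$ satisfies Proposition~\ref{prop: characterization of CP}\iref{item: second characterization of CP} with $q=1$ (the only monomial of $y(x-\nu y)^{p-1}$ prime to $x$ is a nonzero multiple of $y^p$, while deleting any single linear factor leaves a nonzero $y^{p-1}$ term), so it is an integral critical point with $\beta=1$ and $\gamma_1=p-1<\bar{q}$. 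If the critical point determining $\ftb(G)$ were of this kind, then your $q_0$ would equal $1$, SG~\ref{sg: upper bound} would not apply, and the chain of inequalities in your truncation step would collapse: you could still place $\ftb(G)$ in $\QQ_{\bar{q}}$, but you would have no bound of the form $0<\lambda-\ftb(G)\le 1/\bar{q}$, so Remark~\ref{rem: facts about p expansions} could not identify $\ftb(G)$ with $\tr{\lambda}{e}$. To close the general case you would need to show that no integral critical point of the block form of Lemma~\ref{lem: CPs have blocks of identical coordinates} lies strictly below $\lambda\vv{a}$; the paper sidesteps this entirely by assuming $\bar{q}=1$, which forces $\gamma_i<\lambda\bar{q}k_i\le 1$ and turns Corollary~\ref{cor: analysis of block CPs}(2) into the clean statement that $\beta$ is never a positive integer.
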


We conclude this section and the paper with a simple proof of this theorem 
	under the additional assumption that $u$ and $v$ are prime to $p$.

\begin{proof}
	In view of  Theorem~\ref{thm: summary homogeneous case}, we may assume that $g$ is not homogeneous---so $g$ 
		is not a monomial and $u\ne v$.
	As before, we assume that $u$ and $v$ are coprime and $\kk=\closure{\kk}$.
	Using Lemma~\ref{lem: QH factorization} and the assumption that $g$ is square free, we write 
		\[G=\psi(g)= \xi\cdot x^{ju} y^{kv}\cdot\prod_{i=1}^{m}(x^{uv}-\mu_i y^{uv}),\]
		where $j,k\in \{0,1\}$ and the factors $x^{uv}-\mu_i y^{uv}$ are square free and pairwise prime. 
	We consider the case $j=k=1$; the other cases are analogous.
	Set $\lambda=(u+v)/\deg G$ and $\idealb=\ideal{x^u,y^v}$ and, adopting the setup used earlier 
		(minding that here $v=w$ and $\bar{q}=1$), write $G=\xi\ell^\vv{a}$, 
		where $\vv{a}=u\canvec_1+v\canvec_2+\sum_{i=1}^m \vv{f}_i$.
	
	If $\ftb(G)\ne \lambda$, then $\ftb(G)$ is determined by a critical point $\vv{c}<\lambda \vv{a}$.
	Since $\lambda <1$,  we find that $\vv{c}\not\in \NN^n$,\footnote{
		Note that if $x$ or $y$ are not factors of $g$ (\ie $jk=0$), then $\lambda$ may---in some rare instances---be 
		$\ge 1$, in which case $\ftb(G)$ is determined  by an integral critical point and  $\ftb(G)=1$.} 
		and Lemmata~\ref{lem: CPs have blocks of identical coordinates} and~\ref{lem: analysis of block CPs} show that
			$\vv{c}=\alpha\canvec_1+\beta\canvec_2+\sum_{i=1}^m \gamma_i\vv{f}_i$ for some $\alpha,\beta,\gamma_i\in \QQ_{q}$,
			where $q=p^e>1$, 
			and $\ftb(G)=\max\{\alpha/u,\beta/v,\gamma_i\} \in \QQ_q$.
	Remark~\ref{rem: truncation for FTs} then shows that $\ftb(G)=\tr{\lambda}{e}$.
	As $\fpt(g)=\ftb(G)$, by Proposition~\ref{prop: relating H and QH cases}, we have shown that either $\fpt(g)=\lambda=\min\{1,\lambda\}$ or 
		$\fpt(g)=\tr{\lambda}{e}$, for some $e\ge 1$.
\end{proof}
 
\section*{Acknowledgements}

This paper was written while the second author was visiting the University of Utah, during a sabbatical leave.
He wishes to thank the University of Utah for the hospitality and inspiring environment, and Anurag Singh for making this happen. 
The first author gratefully acknowledges support from the NSF through a Mathematical Sciences Research Postdoctoral Fellowship.
The authors would like to thank the anonymous referees for the thorough  reading and the valuable comments, corrections, and suggestions. 

\appendix
\section{The algorithm}\label{appendix: algorithm}

In this appendix we present an algorithm to compute $F$-pure thresholds of forms in two variables
	and discuss some of the practical issues surrounding its implementation. 
The first issue one faces is factoring the form:
	this factorization often happens in very large field extensions that cannot be handled by the computer. 
We dodge this issue here, assuming that a factorization is known from the start: $G=\ell_1^{a_1}\ldots\ell_n^{a_n}=\ell^\vv{a}$.
A na\"ive use of Theorem~\ref{thm: summary homogeneous case} then leads to Algorithm~\ref{naive alg}.

\ \\

\begin{algorithm}[H]
\DontPrintSemicolon
\Indp
\medskip
\SetKwInput{Input}{Input}\SetKwInput{Output}{Output}
 \Input{$\ell=(\ell_1,\ldots,\ell_n)$, $\vv{a}=(a_1,\ldots,a_n)$}
 \Output{The $F$-pure threshold of $G=\ell_1^{a_1}\ldots\ell_n^{a_n}$}
\medskip
\lIf{$a_i\ge \norm{\vv{a}}/2$, for some $i$}{\KwRet{$1/a_i$}}
 $\lambda\leftarrow 2/\norm{\vv{a}}$\;
 \lIf{$\fpt(G)=\lambda$}{\KwRet{$\lambda$}}\label{line: fpt test}
 $e\leftarrow 1$\;
 \lWhile{$\tr{\lambda\vv{a}}{e}\not\in \Upper$}{$e\leftarrow e+1$}\label{line: while}
Locate the critical point $\vv{c}\in \QQ^n_{p^e}$ such that $\vv{c}\le \tr{\lambda\vv{a}}{e}$\;\label{line: find CP}
\KwRet{ $\max \big\{ \frac{c_1}{a_1}, \ldots, \frac{c_n}{a_n} \big \}$}
\caption{Na\"ive FPT Algorithm}\label{naive alg}
\medskip
\end{algorithm}

\newpage 

\begin{rem}
A few remarks are in order:
\begin{itemize}
	\item	In line~\ref{line: fpt test} we test if $\fpt(G)=\lambda$. 
		Testing whether $\fpt(G)$ equals any given rational number can be done rather efficiently 
			by a method of Schwede, which compares the ``non-$F$-pure ideals'' of \cite{fujino+schwede+takagi.non-lc-ideal-sheaves} 
			and test ideals.
		This was implemented by Schwede in the \emph{Macaulay2} package \emph{PosChar} \cite{poschar}, 
			in the command \texttt{isFPTPoly}.
	\item Checking if $\tr{\lambda\vv{a}}{e}\in \Upper$ in line~\ref{line: while} boils down to checking ideal membership
		(specifically, $\ell^{p^e\tr{\lambda\vv{a}}{e}}\in \idealm^{[p^e]}$), so it can be easily implemented.
	\item The `while' loop in line~\ref{line: while} is guaranteed to end, by Theorem~\ref{thm: summary homogeneous case}(2).
	\item In line~\ref{line: find CP}, we search for a critical point  $\vv{c}\le \tr{\lambda\vv{a}}{e}$, guaranteed to exist by
			Theorem~\ref{thm: summary homogeneous case}(2).
		This is done in the most na\"ive way, by successively subtracting $1/p^e$ from the coordinates of $\tr{\lambda\vv{a}}{e}$, until 
			a minimal point of $\Upper\cap \QQ^n_{p^e}$ is found.
		See Proposition~\ref{prop: characterization of CP} and the comments after its proof.
\end{itemize}
\end{rem}

\begin{discussion}\label{discussion: zooming in}
Although there is no question that Algorithm~\ref{naive alg} works in theory, it does not fare well in practice.
Directly checking if $\tr{\lambda\vv{a}}{e}\in \Upper$ (\ie checking if $\ell^{p^e\tr{\lambda\vv{a}}{e}}\in \idealm^{[p^e]}$) for increasingly large $e$
	(line~\ref{line: while}) can quickly lead to impractical computations 
	involving polynomials of extremely large degrees.
To get around this problem, write the componentwise non-terminating base $p$ expansion of $\lambda\vv{a}$: 
	\[\lambda\vv{a}=\frac{\vv{d}_1}{p}+\frac{\vv{d}_2}{p^2}+\frac{\vv{d}_3}{p^3}+\cdots.\]
Set $\idealb_0\coloneqq \idealm$, and successively compute $\idealb_{e}\coloneqq(\idealb_{e-1}^{[p]}:\ell^{\vv{d}_e})$.
Using the flatness of the Frobenius over $\kk[x,y]$ we see that
	\[\idealb_{e}=(\idealm^{[p^{e}]}:\ell^{p^{e-1}\vv{d}_1+p^{e-2}\vv{d}_2+\cdots+\vv{d}_e})=
	(\idealm^{[p^{e}]}:\ell^{p^e\tr{\lambda\vv{a}}{e}}),\]
	so $\idealb_e=\ideal{1}$ if and only if $\tr{\lambda\vv{a}}{e}\in \Upper$. 
When computing the ideals $\idealb_e$ we never raise polynomials to powers greater than $p$.
Moreover, it can be shown that each $\idealb_e$ can be generated by two forms whose degrees add up to at most $n$.
So we can check whether $\tr{\lambda\vv{a}}{e}\in \Upper$ for arbitrarily large $e$ without ever having to deal with  large degree polynomials.

Searching for a critical point $\vv{c}\le \tr{\lambda\vv{a}}{e}$ is also impractical if $e$ is large.
In our improved algorithm, Algorithm~\ref{alg}, we use the 
	above ideas in the search for critical points, relying on the next lemma, which relates 
	regions and critical points with respect to different ideals.
\end{discussion}

To simplify our language, in what follows we shall refer to a ``critical point associated with $\ell$ and $\idealb$'' simply as a ``$\idealb$-critical point'', 
	and denote the upper region attached to $\ell$ and $\idealb$ by $\Upper_\idealb$, unless $\idealb=\idealm$, 
	in which case we shall simply denote the upper region by the usual $\Upper$. 
We shall also use the term ``critical point under $\vv{u}$'' for a critical point $\vv{c}$ such that $\vv{c}\le \vv{u}$.

\begin{lem}\label{lem: zooming in and out}
	Let $\idealb$ be an ideal generated by two non-constant relatively prime forms in $\kk[x,y]$.
	Let $q$ be a power of $p$, $\vv{k}\in \NN^n$, and $\idealb'=(\idealb^{[q]}:\ell^\vv{k})$.
	Finally, let $\vv{u}\in (\QQnn)^n_{p^\infty}$ and $\vv{v}=(\vv{u}+\vv{k})/q$.
	Then\textup:
	\begin{enumerate}[(1)]
		\item $\vv{u}\in \Upper_{\idealb'}\ \iff\ \vv{v}\in \Upper_\idealb$.
		\item If $\vv{v}$ is a $\idealb$-critical point, 
				then $\vv{u}$ is a $\idealb'$-critical point.
			The converse holds if $u_i>0$ whenever $v_i>0$, and in particular when $\vv{u}$ has positive coordinates.
	\end{enumerate}
\end{lem}

\begin{proof}
	The first point follows from the fact that $(\idealb^{[q]}:\ell^\vv{k})^{[q']}=(\idealb^{[qq']}:\ell^{q'\vv{k}})$, 
		due to the flatness of the Frobenius over $\kk[x,y]$.
	The second point follows from the first and the characterization of critical points given in 
		Proposition~\ref{prop: characterization of CP}\iref{item: first characterization of CP discrete}.
\end{proof}

The following corollary shows how the above lemma will be used in Algorithm~\ref{alg}.

\begin{cor}\label{cor: zooming in and out}
	Adopt the notation introduced in Discussion~\ref{discussion: zooming in}.
	Suppose $\idealb_e=\ideal{1}$, so that $\tr{\lambda\vv{a}}{e}\in \Upper$.
	For each $j\in \NN$ with $j<e$, let 
		\[\vv{u}_j=p^j(\tr{\lambda\vv{a}}{e}-\tr{\lambda\vv{a}}{j})=\frac{\vv{d}_{j+1}}{p}+\frac{\vv{d}_{j+2}}{p^2}+\cdots+\frac{\vv{d}_{e}}{p^{e-j}}.\]
	Then\textup:
	\begin{enumerate}[(1)]
		\item	$\vv{u}_j\in \Upper_{\idealb_j}$ and, in particular, $\vv{u}_{e-1}=\vv{d}_{e}/p\in \Upper_{\idealb_{e-1}}$.
		\item If $\vv{c}_j\in \QQ^n_{p^{e-j}}$ is a $\idealb_{j}$-critical point under $\vv{u}_j$ with positive 
				coordinates, then $\vv{c}_j/p^{j}+\tr{\lambda \vv{a}}{j}$ is an $\idealm$-critical point under $\tr{\lambda\vv{a}}{e}$.
	\end{enumerate} 
\end{cor}

\begin{proof}
	Set $\vv{k}=p^{j}\tr{\lambda \vv{a}}{j}$.
	Then $\tr{\lambda \vv{a}}{e}=(\vv{u}_j+\vv{k})/p^{j}$ and $(\idealm^{[p^{j}]}:\ell^\vv{k})=\idealb_{j}$, and 
		the corollary follows easily from Lemma~\ref{lem: zooming in and out}. 
\end{proof}

We are now ready to present our improved algorithm---see Algorithm~\ref{alg} on page~\pageref{alg}---and prove its correctness.

\begin{algorithm}
\Indp
\SetInd{2em}{0em}
\DontPrintSemicolon
\medskip
\SetKwInput{Input}{Input}\SetKwInput{Output}{Output}
 \Input{$\ell=(\ell_1,\ldots,\ell_n)$, $\vv{a}=(a_1,\ldots,a_n)$}
 \Output{The $F$-pure threshold of $G=\ell_1^{a_1}\ldots\ell_n^{a_n}$}
\medskip
 \lIf{$a_i\ge \norm{\vv{a}}/2$, for some $i$}{\KwRet{$1/a_i$}}\label{line: endpoint 1}
 $\lambda\leftarrow 2/\norm{\vv{a}}$\;
 $m\leftarrow \text{minimal denominator of }\lambda$\;
 \If{
 	$p\mid m$
}
{
 	\leIf{$\fpt(G)=\lambda$}{\KwRet{$\lambda$}}{$\mu\leftarrow \infty$}\label{line: endpoint 2}
 }
 \lElse{$\mu\leftarrow $ multiplicative order of $p$ modulo $m$}\label{label: line 7}
 $\idealb_0\leftarrow \idealm$\;
 $e\leftarrow 0$\;
 \While{$\idealb_e\ne \ideal{1}$ and $e < \mu$\label{line: while 1}}{ 
	$e\leftarrow e+1$\;
	$\vv{d}_e\leftarrow e$th digit of the unique componentwise non-terminating base $p$ expansion of $\lambda\vv{a}$\;
	$\idealb_{e}\leftarrow (\idealb_{e-1}^{[p]}:\ell^{\vv{d}_e})$\;
}
\lIf{$\idealb_e\ne \ideal{1}$}{\KwRet{$\lambda$}}\label{line: endpoint 3}
$j\leftarrow e-1$\;
$\vv{v}_j\leftarrow \vv{d}_{e}/p$\;
$\vv{c}_j\leftarrow $ $\idealb_{j}$-critical point  under $\vv{v}_j$\label{line: search CP 1}\; 
\While{some component of $\vv{c}_j$ is $0$ and $j> 0$\label{line: while 2}}{ 
	$\vv{v}_{j-1}\leftarrow (\vv{c}_{j}+\vv{d}_{j})/p$\;
	$j\leftarrow j-1$\;
	$\vv{c}_j\leftarrow$ $\idealb_{j}$-critical point under $\vv{v}_j$\label{line: CP search}\; 
}
$\vv{c}\leftarrow\vv{c}_j/p^{j}+\tr{\lambda\vv{a}}{j}$\label{line: almost there}\;  
\KwRet{ $\max \big\{ \frac{c_1}{a_1}, \ldots, \frac{c_n}{a_n} \big \}$}
\caption{Improved FPT Algorithm}\label{alg}
\medskip
\end{algorithm}

\begin{thm}
	Algorithm~\ref{alg} works.
\end{thm}

\begin{proof}
	The algorithm may terminate prematurely in lines~\ref{line: endpoint 1}, \ref{line: endpoint 2}, or~\ref{line: endpoint 3}.
	If it terminates in line~\ref{line: endpoint 1}, then it returns the correct output, by Theorem~\ref{thm: summary homogeneous case}(1).
	If it terminates in line~\ref{line: endpoint 2}, then it returns the correct output for obvious reasons.	
	If the algorithm gets past line~\ref{label: line 7}, then
		at that point $\lambda=2/\deg(G)$ and $m$ is the minimal denominator of $\lambda$, and one of the following holds: 
	\begin{itemize}
		\item	$p\mid m$, $\fpt(G)\ne \lambda$, and $\mu=\infty$;
		\item $p\nmid m$, and $\mu$ is the multiplicative order of $p$ modulo $m$; it could still be the case that $\fpt(G)=\lambda$.
	\end{itemize}
	The `while' loop in line~\ref{line: while 1} will then compute the ideals $\idealb_e$ introduced in Discussion~\ref{discussion: zooming in}, 
		until $\idealb_e=\ideal{1}$ (\ie $\tr{\lambda\vv{a}}{e}\in \Upper$) or $e=\mu$.
	This loop terminates---this is clear if $\mu<\infty$, and if $\mu=\infty$, then we know that $\fpt(G)\ne \lambda$, 
		so some truncation $\tr{\lambda\vv{a}}{e}$ lies in $\Upper$, by Theorem~\ref{thm: summary homogeneous case}(2).
	If $\idealb_e\ne\ideal{1}$ at the end of this loop, 
		the aforementioned theorem allows us to conclude that $\fpt(G)=\lambda$; the algorithm 
		terminates in line~\ref{line: endpoint 3}, returning the correct output.
		
	Suppose the algorithm gets past line~\ref{line: endpoint 3}.
	At that point, we have $\tr{\lambda \vv{a}}{e}\in \Upper$, so $\vv{v}_{e-1}=\vv{d}_{e}/p\in \Upper_{\idealb_{e-1}}$, 
		by Corollary~\ref{cor: zooming in and out}(1).
	In line~\ref{line: search CP 1} we look  for a $\idealb_{e-1}$-critical point $\vv{c}_{e-1}\in \QQ^n_p$  
		under $\vv{v}_{e-1}$.
	If $\vv{c}_{e-1}$ has positive coordinates (so the `while' loop in line~\ref{line: while 2}
		is bypassed), then $\vv{c}=\vv{c}_{e-1}/p^{e-1}+\tr{\lambda\vv{a}}{e-1}$, computed in line~\ref{line: almost there}, 
		is an $\idealm$-critical point under $\tr{\lambda \vv{a}}{e}$, 
		by Corollary~\ref{cor: zooming in and out}(2),
		so that in this situation the algorithm returns the correct output.  
	
	It remains to examine what happens when $\vv{c}_{e-1}$ has some zero coordinate. 
	In the `while' loop in line~\ref{line: while 2}, points $\vv{v}_{j}$ and $\vv{c}_{j}$	($j=e-2, e-3,\ldots$) are
		constructed so that $\vv{v}_{j-1}= (\vv{c}_{j}+\vv{d}_{j})/p$ and 
		$\vv{c}_j$ is a $\idealb_{j}$-critical point under $\vv{v}_j$.
	We claim that, for each $0\le j<e$, 
		$\vv{v}_j\in \Upper_{\idealb_j}$ (so it makes sense to look for a critical point $\vv{c}_j$ under $\vv{v}_j$ in 
			line~\ref{line: CP search}) and $\vv{v}_j\le \vv{u}_j\coloneqq p^j(\tr{\lambda\vv{a}}{e}-\tr{\lambda\vv{a}}{j})$.
	This follows from an easy inductive argument using Lemma~\ref{lem: zooming in and out} 
		and the recursions $\vv{u}_{j-1}=(\vv{u}_{j}+\vv{d}_{j})/p$
		and $\idealb_j=(\idealb_{j-1}^{[p]}:\ell^{\vv{d}_j})$.
	The `while' loop may terminate when a point $\vv{c}_j$ with positive coordinates is found. 
	That point is a $\idealb_j$-critical point under $\vv{v}_j$, and therefore under $\vv{u}_j$.
	The point $\vv{c}$ computed in line~\ref{line: almost there} is thus an $\idealm$-critical point under $\tr{\lambda\vv{a}}{e}$, 
		by Corollary~\ref{cor: zooming in and out}(2), and the algorithm returns  the correct output. 
	Alternatively, the `while' loop may terminate when $j=0$, and 
		$\vv{c}=\vv{c}_0$ is a $\idealb_0$-critical point under $\vv{u}_0$.
	But $\idealb_0=\idealm$ and $\vv{u}_0=\tr{\lambda\vv{a}}{e}$, 
		so again the algorithm returns the correct output.
\end{proof}

Algorithm~\ref{alg} could be improved a bit by using the full strength of Lemma~\ref{lem: zooming in and out}(2) in 
	the stopping condition for the `while' loop in line~\ref{line: while 2}.
Comparing Algorithms~\ref{naive alg} and~\ref{alg}, the reader will notice that in Algorithm~\ref{alg} we are trying 
	to avoid having to test whether $\fpt(G)=\lambda$, by performing that test only when $p$ divides $m$.
This is because that test is often slow when the degree of $G$ is large.
If, on the other hand, a factorization of $G$ is not know from the start, then that test should be the first thing done in the algorithm.

Algorithm~\ref{alg} has been implemented by the second author in the \emph{Macaylay2}  package \emph{PosChar} \cite{poschar},
	in the \texttt{FPT2VarHomog} command.  
To illustrate its use, below we show a \emph{Macaulay2} session that computes the example given in the introduction:

\lstset{basicstyle=\small\ttfamily}
\begin{lstlisting}
i1 : installPackage("PosChar");
i2 : kk=GF(ZZ/5[a]/ideal(a^3+a+1));
i3 : kk[x,y];
i4 : L={x,y,x+y,x+a*y,x+a^2*y,x+a^3*y};
i5 : FPT2VarHomog(L,{420,419,417,390,402,438})
       46636216675556057485911762783799675605705641779512143
o5 = --------------------------------------------------------
     57968817327716179454988321140262996777892112731933593750
o5 : QQ
\end{lstlisting}
If a form in two variable is given, \texttt{FPT2VarHomog} will try to factor the form in an appropriate extension of the coefficient field
	and then compute its $F$-pure threshold using Algorithm~\ref{alg}:
\begin{lstlisting}
i6 : ZZ/2[x,y];
i7 : G=x^10*y^3+x^9*y^4+x^6*y^7+x^4*y^9+x^3*y^10+x*y^12+y^13;
i8 : FPT2VarHomog(G)
      315
o8 = ----
     2048
o8 : QQ

\end{lstlisting}
We invite the reader to download the package and try that command, and welcome any suggestions or bug reports.

\bibliographystyle{amsalpha}
\footnotesize
\bibliography{bibdatabase}
	
\end{document}